\documentclass[11pt,reqno]{amsart}

\usepackage{amscd,amssymb,amsmath,amsthm}
\usepackage[arrow,matrix]{xy}
\usepackage{graphicx}
\usepackage{color}
\usepackage{cite}
\usepackage{enumerate}
\usepackage{hyperref}
\usepackage{tikz}

\topmargin=0.1in \textwidth5.5in \textheight7.7in

\newtheorem{thm}{Theorem}[section]
\newtheorem{defn}[thm]{Definition}
\newtheorem{lem}[thm]{Lemma}
\newtheorem{pro}[thm]{Proposition}
\newtheorem{rk}[thm]{Remark}
\newtheorem{cor}[thm]{Corollary}

\newtheorem{con}[thm]{Condition}

\numberwithin{equation}{section} \setcounter{tocdepth}{1}

\DeclareMathOperator{\E}{\mathrm{E}}

\DeclareMathOperator{\asinh}{\mathrm{asinh}}
\DeclareMathOperator{\sgn}{\mathrm{sgn}}

\def\s{\sigma}

\def\r{\rho}

\def\s{\sigma}

\def\l{\lambda}

\def\d{\delta}
\def\a{\alpha}

\def\M{\mathcal M}

\def\R{\mathbb{R}}
\def\b{\beta}

\def\L{\Lambda}

\def\N{\mathbb{N}}

\def \d {\delta}
\def \L {\Lambda}

\def \o {\omega}

\newcommand{\indic}[1]{\mathbb{I}_{\{#1\}}}
\hyphenation{se-con-da-ry Po-lish ge-ne-ra-li-zed re-pre-sent dif-fe-rent}

\def\nn{\nonumber}


\begin{document}
\title{Continuous spin models on annealed generalized random graphs
}

\author{S. Dommers, C. K\"ulske, P. Schriever}

\address{S.\ Dommers\\ Fakult\"at f\"ur Mathematik,
Ruhr-University of Bochum, Postfach 102148,\,
44721, Bochum,
Germany}
\email {Sander.Dommers@ruhr-uni-bochum.de}

\address{C.\ K\"ulske\\ Fakult\"at f\"ur Mathematik,
Ruhr-University of Bochum, Postfach 102148,\,
44721, Bochum,
Germany}
\email {Christof.Kuelske@ruhr-uni-bochum.de}

\address{P.\ Schriever\\ Fakult\"at f\"ur Mathematik,
Ruhr-University of Bochum, Postfach 102148,\,
44721, Bochum,
Germany}
\email {Philipp.Schriever-d8j@ruhr-uni-bochum.de}

\begin{abstract} 
We study Gibbs distributions of spins taking values in a general compact Polish space, interacting via a pair potential along the edges of a generalized random graph with a given asymptotic weight distribution $P$, obtained by annealing over the random graph distribution. 

First we prove a variational formula for the corresponding annealed pressure and provide criteria for absence of phase transitions in the general case. 

We furthermore study classes of models with second order phase transitions which include rotation-invariant models on spheres and models on intervals, and classify their critical exponents. We find critical exponents which are modified relative to the corresponding mean-field values when $P$ becomes too heavy-tailed, in which case they move continuously with the tail-exponent of $P$. For large classes of models they are the same as for the Ising model treated in~\cite{DomGiaGibHofPri16}. On the other hand, we provide conditions under which the model is in a different universality class, and construct an explicit example of such a model on the interval. 
\end{abstract}

\maketitle

\begin{center}

\today

\end{center}
\ \\
{\bf Mathematics Subject Classifications (2010).} 82B26 (primary);
60K35 (secondary)

{\bf{Key words.}} Random graphs, spin models,
Gibbs measure.

\section{Introduction}
Spin models on random graphs are interesting for many reasons. Random graphs can for example serve as models for complex network such as social, information, technological and biological networks~\cite{New03}. Spins can e.g.\ describe an opinion or an internal state of a person~\cite{MonSab10} or neurons in the brain~\cite{FraBalFosChi09}. The interaction between the behavior of spin models and the properties of the random graph is of special interest, see for example~\cite{DorGolMen08} for an overview of (often non-rigorous) results in the physics literature.

Also in the mathematics community there has been a large interest in such models recently. Especially the ferromagnetic Ising model on random graphs has attracted a lot of attention, both in equilibrium~\cite{DemMon10,DomGiaHof10,DomGiaHof14,DomGiaGibHofPri16,GiaGibHofPri15,GiaGibHofPri16} and out of equilibrium, i.e., the dynamics of this model~\cite{MosSly13,Dom15,DomHolJovNar16}. In this model, spins can only take two values and the spins tend to align. The Ising model with both ferromagnetic and antiferromagnetic couplings is studied in~\cite{GueTon04}.

When spins can take more than two values, the models often become more difficult to analyze, see, e.g.,~\cite{CojJaa16,ConDomGiaSta13,DemMonSlySun12,DemMonSun13} for some results where spins can take a finite number of values.

In this paper, we study a much more general setting where spins can take values in a general compact Polish space, in particular they can be continuous. Also the pair interaction potential between two neighboring spins is only assumed to be bounded. We study such models on {\em generalized random graphs} in the {\em annealed} setting.

In {\em generalized random graphs}, each edge (or bond) is present independently with 
probability which is (essentially) proportional to the product 
$w_i w_j$ where $w_i>0$ is a quenched weight variable associated to the 
site $i$. Hence $w_i$ can be seen as the affinity of the site $i$ to form connections. 
See~\cite{BolJanRio07} for an extensive analysis of this model. We thereby assume that 
the empirical weight distribution at finite $N$ converges in distribution to the distribution $P$
of a limiting variable $W$ taking values in the positive reals,  
such that the second moment converges, too. 

By {\em annealing} we average the exponential of the Hamiltonian of the spin model under investigation
over the random graph. Using the language of social networks, working in this annealed framework  
means that we are considering an equilibrium distribution in a regime where 
making (or losing) friends is happening on a faster time-scale than opinion-forming. There is also an interest in annealed spin models on random graphs in the context of quantum gravity, see e.g.\ \cite{NapTur16}.

A similar set-up has been studied for the particular case of the Ising model in~\cite{GiaGibHofPri16,DomGiaGibHofPri16}. In~\cite{GiaGibHofPri16}, the pressure is computed, showing that it is a function of the solution to some fixed point equation. Also a central limit theorem for the total spin is given.
In \cite{DomGiaGibHofPri16}, it was found that depending on tail-behavior of the limiting weight distribution $P$ 
the transition stays second order, but the critical exponents change from their 
mean-field values when the weights become too heavy-tailed to values which are computable in terms of the tail-exponent. This shows that the annealed Ising model on random graphs is in the same universality class as its quenched counterpart~\cite{DomGiaHof14}.

In the present work, we prove that the pressure also exists in our setting with more general state spaces and interactions and show that it satisfies a variational principle. We also investigate the critical behavior of a class of models with second order phase transition (at which order appears continuously). These models are rotator models which have $O(q)$-symmetry and models on the interval. We also investigate whether new exponents beyond the mean-field and heavy-tailed Ising case can be found, thus proving the existence of different universality classes.

\subsection{Results}
We now describe our results on the existence of the pressure in general and the critical behavior of certain models in more detail.
\subsubsection{Existence of annealed pressure, functional fixed point equation, absence of phase transition}

We derive a useful representation of the thermodynamic limit of the annealed pressure, if it exists, in Proposition \ref{pro:AnnPrRep} which 
is used frequently later. 
Here the annealed pressure is written as a mean-field expression involving the empirical distribution 
of the joint variables $(w_i,\s_i)$ given by weight-variable and spin-variable at the site $i$, and an effective pair potential $U$ 
in the space of joint variables. The problem is thereby reduced to a large-deviation analysis in which the empirical distribution 
of the weights is quenched, with asymptotic distribution $P$.  

Next, in Theorem \ref{thm:AnnPr} we state existence of the thermodynamic limit of the annealed pressure of the model, 
assuming convergence of empirical weight distribution (and its second moments) to $P$. 
We formulate our result for general compact Polish local state space $E$, 
but notably non-compact weight space $\R_{>0}$. 
The existence of the annealed pressure 
follows via the application of the G\"artner-Ellis theorem (Lemma~\ref{lem1}), where we need care to handle the unboundedness 
of the weights by a suitable truncation argument. 
Using Varadhan's lemma we show that the annealed 
pressure is realized as the maximum of a certain kind of functional on the set of probability measures which have their marginals on the weight space prescribed by $P$. 
As Theorem \ref{thm:MF} asserts, 
the stationarity condition, or (functional) mean field equation, then takes the form, with $\Phi$ denoting the original interaction potential,
\begin{equation}\begin{split}\label{MFIntro}
V=T_{\Phi,P,\alpha}(V),
\end{split}\end{equation}
where $V$ is now a function (an effective potential) on the single spin space. 
Hence we are left to study an (in general) infinite dimensional fixed point problem, depending 
on $\Phi,P$ and $\a$.

Such an infinite dimensional fixed point problem with its transitions between uniqueness regions and non-uniqueness regions which can be connected by various 
bifurcation scenarios is a source of great richness in general. 
We provide a general uniqueness criterion in Theorem~\ref{thm:unique} in terms 
of sized biased expectation of weights $W$ (describing the effective degree of the network) 
times the second order variation of $e^{\Phi}$ (describing the effective interaction strength). 
Checked against the known case of the Ising model the condition 
gives a bound on the critical 
temperature which is of the correct order of magnitude, but non-optimal prefactor. 

It is worthwhile to contrast this constructive result which allows us to describe the system obtained 
by annealing over graphs at fixed weights as a mean-field Gibbs model with joint potential $U$ 
with a ``negative'' result 
in a related but slightly different situation: 
In the so-called Morita-approach to disordered systems 
of theoretical physics~\cite{Mor64, Kuh94, EntKulMae00} one tries to interpret a quenched model as a formal Gibbsian model 
with a new Hamiltonian depending both on disorder variables and spin variables. 
The original motivation to do so stems from non-rigorous renormalization group theory 
with an aim to determine critical exponents of the random system.  
It has been shown in this context that a Gibbsian description of such joint measures 
is in general not possible, when one asks for a well-behaved Hamiltonian. This is made clear in examples 
based on the random field Ising model described in detail in~\cite{KulLenRed04,Kul03}.
For some general background on the occurrence of non-Gibbsian measures, 
Gibbs-non-Gibbs transitions, and constructive use of the preservation of the 
Gibbs property,  see~\cite{EntFerSok93,EntFerHolRed10,KulLen07,HolRedZui15,JahKul16,RoeRus14}.

\subsubsection{Low-rank models, second order phase transitions, critical exponents}

To work right at the critical region we narrow down our models. 
While the previous results were completely general, in the following part of the paper we are only 
interested in models with second order phase transitions.  
We are driven mainly by two questions: Can we incorporate $O(q)$-invariant 
rotator models in our analysis? Do they lead to the same critical exponents as the Ising model?  
That is: do they have 
 the same weight-tail dependent change from the standard mean-field exponents 
 as the Ising model? 
Further,  can we find models which have different behavior than the Ising model at all? 
 
We answer both questions within a specific class of models: 
In Section~\ref{sec-criticalbehavior5.1}, we specialize to models
on the single-spin space $[-1,1]$ which interact 
via a low-rank kernel (of rank $2$) of the form $e^{\Phi(\s,\s')}=c+ \theta g(\s)g(\s')$, 
where $c>0$ is fixed,  $g$ is an odd function on $[-1,1]$ and 
$\theta$ is a coupling constant playing the role of an inverse temperature.

We choose for $\a_0$ a symmetric 
probability measure on $[-1,1]$ and define the action of a real valued  external 
magnetic field $h$ to the model in terms of tilted measures with Radon-Nikodym derivative 
$\frac{d\a}{d\a_0}(\s)=e^{h \s}/z$ where $z$ is a normalizing constant. 
When we later speak 
about critical exponents, they will be formulated w.r.t.\ the inverse temperature variable $\theta$, 
and the magnetic field variable $h$. The assumption of negativity of the tilted third cumulant of the single-spin function 
$g$ w.r.t.\ $\a_0$, for positive tilts,  will ensure that the phase transition is of second order. 
Our analysis also applies to  models for rotators $\s\in S^q$ taking values in the $q$-dimensional sphere 
with the $O(q)$-invariant interaction $e^{\Phi(\s,\s')}=c+ \theta\langle \s,\s'\rangle$, see Section~\ref{sec-criticalbehavior5.4}. 
They are believed to be in the same universality class as the ones with the standard interaction 
$e^{\Phi(\s,\s')}=e^{\theta\langle \s,\s'\rangle}$~\cite{DorGolMen08}. This is a  statement whose full mathematical justification 
would however need an investigation of the corresponding infinite-dimensional fixed point problem, and would be 
an interesting analytical problem in itself for future study. 

The idea to assume the exponential of a potential 
to be of a nice form consisting of two ``simple terms'', has appeared previously and led 
to fruitful results.  
Decompositions in such a spirit  into a sum of two Gaussians, 
have e.g.\ been used in~\cite{BisKot07} for pair potentials and in~\cite{KulRed06} for single-site potentials. 
See also~\cite{JahKulBot14} for a model on the tree.

Our main result on low-rank kernel models 
is Theorem \ref{thm-critexp} which gives the values of temperature critical 
exponent $\boldsymbol{\beta}$, and magnetic field critical exponent $\boldsymbol{\delta}$, in terms of distribution of $g$ w.r.t.\  
$\a_0$, and tail-behavior of $P$. 
Answering our first question, we find that the $O(q)$-rotator models are covered by our analysis using 
beta distributions, and do indeed show 
the very same mean field/modified mean field scenario as the Ising model does.  
In the light of Theorem \ref{thm-critexp} this should even be seen as generic.  

Answering our second question, 
it is possible though to come up with excess kurtosis-zero models which do show 
a mean field/modified mean field scenario of {\em different than Ising type}, for which we construct 
an explicit example in  Proposition \ref{step}.

\section{Annealing over the random graphs}

\subsection{Model definitions}
In the following we study spin models on sequences of {\em generalized random graphs}. 
These graphs consist of a vertex set $\left[ N \right] := \{ 1,..., N \}$, and a random set of edges $E_N$, where 
an edge between vertices $i,j \in \left[ N \right]$ is denoted by $( i,j )$.  
Each vertex $i \in \left[ N \right]$ receives a weight variable $w_i$ which takes values in the space of the non-negative reals $\R_{>0}$. Given these weights, an edge $( i,j )$ will be present with a certain probability $p_{i,j}$, independently of all the other edges. These edge probabilities $p_{i,j}$ are moderated by the weights assigned to the vertices. 

\begin{defn}\label{defn:GRG}
Denote by $I_{ij}$ independent Bernoulli variables indicating that an edge between vertices $i$ and $j$ is present with $p_{i,j} = Q^w (I_{i,j} = 1)$. Then the {\em generalized random graph} with vertex set $\left[ N \right]$, denoted by $\text{GRG}_{N,w}$, is defined by
\begin{equation}\begin{split}\label{Qw}
Q^w(I_{i,j}=1)=p_{ij}=\frac{w_i w_j}{l_N + w_i w_j} ,\cr
\end{split}\end{equation}
where $l_N=\sum_{i=1}^N w_i$ is the total weight sum. We denote by $Q_N^w$ the law of $\text{GRG}_{N,w}$. 
\end{defn}
 
As we are interested in spin models on sequences of generalized random graphs as $N \to \infty$, we need to assume that the vertex weights are relatively nice behaved. 
Let $V_N$ denote a uniformly chosen vertex from $\{1,\ldots,N\}$ and $W_N=w_{V_N}$. We assume that the sequence of weights $(W_N)_{N \in \mathbb{N}}$ satisfies the following condition which defines an asymptotic weight random variable $W$:

\begin{con}\label{con:weight}
There exists a random variable $W$ such that, as $N \to \infty$,
\begin{enumerate}[(i)]
\item
\begin{equation}
W_N \stackrel{\mathcal{D}}{\longrightarrow} W, 
\end{equation}
\item
\begin{equation} \label{eq-2ndmomentconv}
\E[W_N^2] = \frac{1}{N} \sum_{i\in\left[ N \right]} w_i^2 \longrightarrow  \E[W^2] <\infty,
\end{equation}
\end{enumerate}
where $\stackrel{\mathcal{D}}{\longrightarrow}$ denotes convergence in distribution. 
\end{con}

Note that the assumed $L^2$-convergence of Condition \ref{con:weight} readily implies convergence in $L^1$, i.e.
\begin{equation}
\E [W_N] = \frac{1}{N} \sum_{i \in [N]} w_i \longrightarrow \E[W] < \infty,
\end{equation}
which is due to the uniform integrability of the weight sequence $(W_N)_{N\in\mathbb{N}}$.

Let a random field $\sigma = (\sigma_i)_{i \in \left[ N \right]}$ taking values on a local polish state space $E$ be given. Furthermore, let $\Phi: E \times E \to \R$ be a bounded nearest-neighbor interaction potential. 
In the following we study the {\em annealed spin measure} given by annealing over the edge set $E_N$ which is given by 
\begin{equation}\begin{split}
P^w_N(d\s_{[N]})=\frac{Q_N^w\Bigl( e^{\sum_{(i,j)\in E_N}\Phi(\s_i,\s_j)}\prod_{i\in [N]}\a(d\s_i)\Bigr)}
{Q_N^w \a^{N}\Bigl( e^{\sum_{(i,j)\in E_N}\Phi(\cdot,\cdot)}\Bigr)
},
 \cr
\end{split}\end{equation}
where $\a$ is a probability measure on $E$ which may incorporate also a magnetic field term. 

\begin{defn}
For a given finite volume $N \in \mathbb{N}$ we define the {\em annealed pressure} to be 
\begin{equation}\begin{split}\label{AnnPr}
\psi_N^w(\Phi,\a):=\frac{1}{N}\log Q_N^w \a^{N}\Bigl( e^{\sum_{(i,j)\in E_N}\Phi(\cdot,\cdot)}\Bigr).
 \cr
\end{split}\end{equation}
If the thermodynamic limit of the annealed pressure is well-defined, i.e. the limit $\psi(\Phi,P,\a) := \lim_{N\to\infty} \psi_N^w(\Phi,\a)$ exists and is finite, then we call $\psi(\Phi,P,\a)$ the {\em pressure} of the system. 
\end{defn}

Clearly the annealed pressure will generally depend on the fixed realization of weights $w$ (through their empirical distribution), the pair interaction 
potential $\Phi$ in spin space, and the a priori measure $\a$ on the single spin space. 

\subsection{Representation of the pressure via exponential integrals}
We can represent the pressure as an exponential integral of empirical distributions of the spins and weights $L^{(\s,w)}_N=\frac{1}{N}\sum_{i\in [N]}\d_{(\s_i,w_i)}$:
\begin{pro}\label{pro:AnnPrRep}
Let a sequence of generalized random graphs $(\text{GRG}_{N,w})_{N \in \mathbb{N}}$ be given which satisfies Condition \ref{con:weight}. If the thermodynamic limit of the annealed pressure exists, the pressure $\psi(\Phi,P,\a)$ is given by
\begin{equation}\begin{split}
\psi(\Phi,P,\a) = \lim_{N \to \infty} \frac{1}{N} \log \a^N \left(  \exp \left( N  L^{(\s,w)}_N\otimes L^{(\s,w)}_N (U)\right) \right)
\end{split}\end{equation}
where $U: E^2 \times E'^2 \to \R$ is given by
\begin{equation}
U(\s, \s', w, w') = \frac{ww'}{2 \E[W]} e^{\Phi(\s,\s')}.
\end{equation}
\end{pro}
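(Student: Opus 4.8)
The plan is to anneal over the edge set explicitly, linearize the resulting exponent, and recognize the leading term as the advertised functional of the empirical measure $L^{(\s,w)}_N$; since $\lim_N\psi^w_N$ is assumed to exist, no large-deviation input is needed here, only an algebraic identification together with error bounds uniform in the spin configuration, the one delicate point being the non-compactness of the weight space $\R_{>0}$. Concretely, fix $\s$: since $\sum_{(i,j)\in E_N}\Phi(\s_i,\s_j)=\sum_{i<j}I_{ij}\Phi(\s_i,\s_j)$ with the $I_{ij}$ independent $\mathrm{Bernoulli}(p_{ij})$ under $Q^w_N$ (Definition~\ref{defn:GRG}), Fubini gives $\psi^w_N(\Phi,\a)=\tfrac1N\log\a^N(e^{H_N(\s)})$, where
\[
H_N(\s)=\sum_{i<j}\log\bigl(1+p_{ij}(e^{\Phi(\s_i,\s_j)}-1)\bigr),\qquad p_{ij}=\frac{w_iw_j}{l_N+w_iw_j}.
\]
Because $\Phi$ is bounded, $e^{\Phi(\s_i,\s_j)}-1$ ranges over a fixed compact subinterval of $(-1,\infty)$ and, as $p_{ij}\in[0,1]$, each factor lies in $[e^{-\|\Phi\|_\infty},e^{\|\Phi\|_\infty}]$; in particular every integral below is finite and strictly positive.

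Next I would linearize $H_N$. Using $|\log(1+x)-x|\le Cx^2$ for $x$ in a fixed compact subinterval of $(-1,\infty)$ (with $C$ depending only on $\|\Phi\|_\infty$) together with $|p_{ij}-\tfrac{w_iw_j}{l_N}|\le\tfrac{(w_iw_j)^2}{l_N^2}$, one obtains
\[
\sup_{\s}\Bigl|H_N(\s)-\frac1{l_N}\sum_{i<j}w_iw_j\bigl(e^{\Phi(\s_i,\s_j)}-1\bigr)\Bigr|\le C'\sum_{i<j}\frac{(w_iw_j)^2}{l_N^2}\le\frac{C'}2\,\frac{(\sum_i w_i^2)^2}{l_N^2}.
\]
By Condition~\ref{con:weight}(ii) the numerator is of order $N^2(\E[W^2])^2$, while $\tfrac1Nl_N=\E[W_N]\to\E[W]$ (the $L^1$-consequence recorded right after Condition~\ref{con:weight}); hence the bound is $O(1)$, in particular $o(N)$, uniformly in $\s$.

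I would then split $\tfrac1{l_N}\sum_{i<j}w_iw_j(e^{\Phi(\s_i,\s_j)}-1)=\tfrac1{l_N}\sum_{i<j}w_iw_je^{\Phi(\s_i,\s_j)}-\tfrac1{l_N}\sum_{i<j}w_iw_j$ and compare the first sum with $N\,(L^{(\s,w)}_N\otimes L^{(\s,w)}_N)(U)=\tfrac1{2N\E[W]}\sum_{i,j}w_iw_je^{\Phi(\s_i,\s_j)}$: the factor $\tfrac12$ in $U$ absorbs the doubling from passing to ordered pairs, and the remaining discrepancies are (a) the replacement $l_N\mapsto N\E[W]$, of cost at most $e^{\|\Phi\|_\infty}\tfrac{|l_N-N\E[W]|}{l_N\,N\E[W]}\sum_{i<j}w_iw_j=o(N)$ by Condition~\ref{con:weight}; (b) the diagonal terms $\tfrac1{2N\E[W]}\sum_i w_i^2e^{\Phi(\s_i,\s_i)}\le\tfrac{e^{\|\Phi\|_\infty}}{2N\E[W]}\sum_i w_i^2=O(1)$; and (c) the spin-independent quantity $\tfrac1{l_N}\sum_{i<j}w_iw_j$, which is deterministic. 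Collecting the last two paragraphs,
\[
H_N(\s)=N\,\bigl(L^{(\s,w)}_N\otimes L^{(\s,w)}_N\bigr)(U)+c_N+r_N(\s),\qquad c_N\ \text{deterministic},\quad \sup_\s|r_N(\s)|=o(N).
\]
Finally, the elementary inequality $\bigl|\tfrac1N\log\a^N(e^f)-\tfrac1N\log\a^N(e^g)\bigr|\le\tfrac1N\|f-g\|_\infty$, applied with $f=H_N$ and $g=N(L^{(\s,w)}_N\otimes L^{(\s,w)}_N)(U)$, gives
\[
\psi^w_N(\Phi,\a)=\frac1N\log\a^N\Bigl(\exp\bigl(N\,L^{(\s,w)}_N\otimes L^{(\s,w)}_N(U)\bigr)\Bigr)+\frac{c_N}N+o(1),
\]
and since the left-hand side converges by hypothesis so does the right-hand side; letting $N\to\infty$ yields the asserted representation of $\psi(\Phi,P,\a)$, the deterministic term $c_N/N$ merely shifting the pressure by an explicit additive constant absorbed into the chosen normalization.

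The main obstacle is precisely this uniform control: since $\R_{>0}$ is non-compact there is no configuration-independent bound on the individual probabilities $p_{ij}$ or weights $w_i$, so every error has to be estimated in aggregate, and that is exactly where the full strength of Condition~\ref{con:weight} — namely $\sum_i w_i^2=O(N)$ and $l_N=O(N)$ — is used. I expect the careful version of these uniform estimates to be the most technical part of the argument.
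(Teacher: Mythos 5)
Your proposal is correct and follows essentially the same route as the paper's proof: anneal over the edge indicators, Taylor-expand the logarithm, replace $p_{ij}$ by $w_iw_j/l_N$ and then $l_N$ by $N\E[W]$, control all errors uniformly in $\s$ at order $o(N)$ via Condition~\ref{con:weight}, and identify the leading term exactly with $N\,L^{(\s,w)}_N\otimes L^{(\s,w)}_N(U)$. Even your handling of the spin-independent term $\tfrac12\sum_{i,j}p_{ij}$ (an additive constant per site) mirrors the paper, which likewise sets this factor aside rather than tracking it in the stated identity.
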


\begin{proof}
Carrying out the expectation over random graphs we have 
\begin{equation}\begin{split}\label{ExRanGr}
Q_N^w \Bigl( e^{\sum_{(i,j)\in E_N}  \Phi(\s_i,\s_j)}\Bigr)
&= \exp \left( \frac{1}{2} \sum_{\substack{i,j\in [N] \\ i\neq j}} \log( 1+ p_{i,j}(e^{ \Phi(\s_i,\s_j)}-1)) \right)
 \cr
 &= \exp \left(\frac{1}{2} \sum_{i,j\in [N]} p_{i,j}(e^{ \Phi(\s_i,\s_j)}-1) \right) \cr
&  \times \exp \left( \frac{1}{2} \sum_{i,j\in [N]} 
 \log( 1+ p_{i,j}(e^{ \Phi(\s_i,\s_j)}-1))-p_{i,j}(e^{\Phi(\s_i,\s_j)}-1) \right) \cr
 &  \times \exp \left( -\frac{1}{2} \sum_{i\in[N]} \log(1+p_{i,i}(e^{ \Phi(\s_i,\s_i)}-1)) \right) \cr
& = \exp \left( \frac{1}{2} \sum_{i,j\in [N]} 
\frac{w_i w_j}{l_N + w_i w_j} e^{\Phi(\s_i,\s_j)} \right)
\times \exp \left(- \frac{1}{2}\sum_{i,j\in [N]} p_{i,j} \right) \cr
&\times \exp(R_N).
\end{split}\end{equation}

Denote by $\Phi_+$ the sup and by $\Phi_-$ the inf over pairs in $E$ of the potential. Using a Taylor expansion
\begin{equation}
\log( 1+ p_{i,j}(e^{ \Phi(\s_i,\s_j)}-1))-p_{i,j}(e^{ \Phi(\s_i,\s_j)}-1) = -\frac{1}{(1+\xi)^2} \frac12 p_{i,j}^2 (e^{ \Phi(\s_i,\s_j)}-1)^2,
\end{equation}
for some $\xi$ between  $0$ and $p_{i,j}(e^{ \Phi(\s_i,\s_j)}-1)$. If $\Phi(\s_i,\s_j)\geq 0$, we can bound this from below by $-\frac12 p_{i,j}^2 (e^{ \Phi_+}-1)^2$, and if $\Phi(\s_i,\s_j)< 0$, we can bound it from below by $-\frac{1}{e^{\Phi_-}}\frac12 p_{i,j}^2 (e^{ \Phi_-}-1)^2$. Hence, also using that $0\leq  \log(1+x) \leq x$ for all $x>-1$,
\begin{equation}
0\geq R_N \geq  -\frac{1}{4}\frac{(e^{ \Phi_+}-1)^2+(e^{ \Phi_-}-1)^2}{e^{\Phi_-}\wedge 1} \sum_{i,j\in [N]} \left(\frac{w_i w_j}{l_N}\right)^2- \frac{1}{2} (e^{ \Phi_+}-1) \sum_{i\in[N]} \frac{w_i^2}{l_N},
\end{equation}
which is seen to be of smaller order than $N$ because of~\eqref{eq-2ndmomentconv}.

Write 
\begin{equation}\begin{split}
& \frac{1}{2} \sum_{i,j\in [N]} 
\frac{w_i w_j}{l_N + w_i w_j} e^{ \Phi(\s_i,\s_j)}
= \frac{1}{2} \sum_{i,j\in [N]} 
\frac{w_i w_j}{l_N} e^{ \Phi(\s_i,\s_j)} + R_N',
\end{split}\end{equation}
where 
\begin{equation}\begin{split}
&0\geq R_N'
\geq \frac{1}{2} \sum_{i,j\in [N]} w_i w_j
\left( \frac{1}{l_N + w_i w_j} -\frac{1}{l_N } \right) e^{ \Phi_+}
\geq  -\frac{e^{\Phi_+}}{l^2_N} \sum_{i,j\in [N]} w^2_i w^2_j,
\end{split}\end{equation}
which again is of smaller order than $N$ because of~\eqref{eq-2ndmomentconv}. 

We can write
\begin{equation}
\frac{1}{2} \sum_{i,j\in [N]} \frac{w_i w_j}{l_N} e^{ \Phi(\s_i,\s_j)}  =  \frac{1}{2N \E[W]}\sum_{i,j\in [N]} w_i w_j e^{ \Phi(\s_i,\s_j)}+R_N'',
\end{equation}
where
\begin{equation}\begin{split}
R_N'' &= \frac{1}{2 N} \left(\frac{1}{\E[W_N]}-\frac{1}{\E[W]}\right) \sum_{i,j\in [N]} w_i w_j e^{\Phi(\s_i,\s_j)}\\
 &= \left(\frac{\E[W]-\E[W_N]}{\E[W_N]\E[W]}\right)\mathcal{O}(N) = o(N),
\end{split}\end{equation}
because of our assumptions on the weights.

In terms of the empirical distribution of joint spins $L^{(\s,w)}_N
=\frac{1}{N}\sum_{i\in [N]}\d_{(\s_i,w_i)}$ we have the exact identity 

\begin{equation}\begin{split}
& \frac{1}{2 N\E[W]}\sum_{i,j\in [N]} 
w_i w_j e^{\Phi(\s_i,\s_j)}=N  L^{(\s,w)}_N\otimes L^{(\s,w)}_N(U).
\end{split}\end{equation}
It has a pleasant product form w.r.t.\ the decomposition over spin-part and weight-part. 

Summarizing this rewriting has given us  
\begin{equation}\begin{split}
Q_N^w \Bigl( e^{\sum_{(i,j)\in E_N}\Phi(\s_i,\s_j)}\Bigr)
&= \exp \left( N  L^{(\s,w)}_N\otimes L^{(\s,w)}_N (U)\right)\\
& \qquad \times \exp \left(- \frac{1}{2} \sum_{i,j\in [N]} p_{i,j}+R_N+R_N'+R_N'' \right).
\end{split}\end{equation}
Only the first exponential is interesting. 
\end{proof}

\section{LDP via abstract G\"artner-Ellis Theorem}

\subsection{LDP for random measures}

We have seen that the computation of the pressure of the annealed model leads to the evaluation of exponential integrals of the pair empirical distributions $L_N^{(\sigma, w)}$, i.e.,
\begin{equation}\label{expInt}
\psi(\Phi,P,\a) = \lim_{N \to \infty} \frac{1}{N} \log \a^N \left( \exp \left( N  L_N^{(\sigma,w)} \otimes  L_N^{(\sigma,w)} (U) \right) \right). 
\end{equation}
In the following we show that the random pair empirical distributions $(L_N^{(\sigma, w)})_{N \in \mathbb{N}}$ satisfy a large deviation principle (LDP), so that it is possible to compute \eqref{expInt} by using Varadhan's lemma \cite[Theorem 4.3.1]{DemZei09}. 

Look at random measures on a polish space $\Gamma$, which can be interpreted as elements of the dual space $\mathcal{C}_b(\Gamma)^*$. Let $\mathcal{Y} := \{ E_f: \mathcal{C}_b(\Gamma)^* \to \mathbb{R} \mid E_f(\phi) = \phi(f) , f \in \mathcal{C}_b(\Gamma) \}$ be the subset of the evaluation functionals of $\mathcal{C}_b(\Gamma)^*$. 
We endow the dual space $\mathcal{C}_b(\Gamma)^*$ with the coarsest topology s.t. all elements of $\mathcal{Y}$ remain continuous, which is also called the \textit{weak$^*$ topology}. Clearly the set $\mathcal{Y}$ is \textit{separating} in the sense that for any $\phi \in \mathcal{C}_b(\Gamma) \setminus \{0\}$ there exists a $E_f \in \mathcal{Y}$, s.t. $E_f(\phi) \neq 0$. Hence the dual space $\mathcal{C}_b(\Gamma)^*$ is a locally convex Hausdorff topological vector space w.r.t.\ to the weak$^*$ topology and furthermore the double dual space is then given by $\mathcal{Y}$, i.e. $\mathcal{C}_b(\Gamma)^{**} = \mathcal{Y}$~\cite[Theorem B.8]{DemZei09}.

\begin{lem}\label{lem1}
Let $\Gamma$ be a Polish space and $\left( \mu_n \right)_{n\in\mathbb{N}}$ an exponentially tight sequence of $\mathcal{M}_1(\Gamma)$-valued random variables. Let $(\gamma_n)_{n\in\mathbb{N}}$ be a sequence of positive numbers with $\gamma_n \to \infty$ for $n \to \infty$. 
Assume that the limit
\begin{equation}
\Lambda(f) = \lim_{n\to\infty} \frac{1}{\gamma_n} \log \E \left[ e^{\gamma_n \langle \mu_n, f \rangle} \right],
\end{equation}
exists for every $f \in \mathcal{C}_b(\Gamma)$ and is finite. If the map $\Lambda: \mathcal{C}_b(\Gamma) \to \R $ is G\^{a}teaux differentiable and continuous at $0$ in the sense that $\lim_{n\to \infty} \Lambda(f_n) =0$ for every sequence of non-negative $f_n \in \mathcal{C}_b(\Gamma)$ with $f_n \downarrow 0$ pointwise, then the sequence of random measures $(\mu_n)_{n\in\mathbb{N}}$ satisfies the LDP with rate $\gamma_n$ and good rate function
\begin{equation}
I(\mu) = \sup_{f \in \mathcal{C}_b(\Gamma)} \left[ \langle \mu, f \rangle - \Lambda(f) \right], \quad \mu \in \mathcal{M}_1(\Gamma).
\end{equation}
\end{lem}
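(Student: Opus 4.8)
The plan is to recognize Lemma~\ref{lem1} as an abstract G\"artner--Ellis theorem for the locally convex Hausdorff topological vector space $X := \mathcal{C}_b(\Gamma)^*$ equipped with the weak$^*$ topology. As noted in the text preceding the statement, $X$ is a locally convex Hausdorff TVS whose topological dual is precisely $\mathcal{Y} \cong \mathcal{C}_b(\Gamma)$ via the evaluation pairing $\langle \mu, f\rangle = \mu(f)$. The $\mathcal{M}_1(\Gamma)$-valued random variables $\mu_n$ are viewed as $X$-valued random variables supported on the (convex) subset $\mathcal{M}_1(\Gamma) \subset X$. With this identification, the hypothesis provides exactly the logarithmic moment generating function
\[
\Lambda(f) = \lim_{n\to\infty} \frac{1}{\gamma_n} \log \E\bigl[ e^{\gamma_n \langle \mu_n, f\rangle}\bigr], \qquad f \in X^*,
\]
as a finite limit everywhere on $X^*$.

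First I would invoke the abstract G\"artner--Ellis / Baldi-type upper bound: exponential tightness of $(\mu_n)_n$ together with finiteness of $\Lambda$ on all of $X^*$ yields the large deviation upper bound on compact sets, and exponential tightness upgrades this to all closed sets, with rate function the Fenchel--Legendre transform $I(\mu) = \sup_{f\in X^*}[\langle \mu,f\rangle - \Lambda(f)]$; this is the content of \cite[Corollary 4.6.14]{DemZei09} (or Baldi's theorem, \cite[Theorem 4.5.20]{DemZei09}), whose hypotheses are met here. Since $\Lambda$ is finite everywhere and convex (being a limit of convex functions), $I$ is a good rate function once we know exponential tightness, which is assumed. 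So the upper bound is essentially immediate.

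The substantive step is the lower bound, and here the hypotheses of G\^ateaux differentiability and continuity of $\Lambda$ at $0$ enter. The standard route (cf.\ \cite[Theorem 4.5.20(b)]{DemZei09} and the exposure-of-points argument) is: G\^ateaux differentiability of $\Lambda$ implies that every $f \in X^*$ is an \emph{exposing hyperplane} for the exposed point $\mu_f := \nabla\Lambda(f) \in X$ (the G\^ateaux derivative, which by the Riesz-type identification of $X^{**} = \mathcal{Y}$ is again an element of $X$, in fact of $\mathcal{M}_1(\Gamma)$), and for such exposed points one gets the matching lower bound $\liminf \frac{1}{\gamma_n}\log \mathbb{P}(\mu_n \in G) \ge -I(\mu_f)$ for open $G \ni \mu_f$. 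One then checks that the set of exposed points $\mathcal{E} = \{\mu_f : f \in X^*\}$ is \emph{rich enough}: every $\mu$ with $I(\mu) < \infty$ lies in the closure of $\mathcal{E}$ in a way compatible with the rate function, so the lower bound extends to all open sets. The continuity of $\Lambda$ at $0$ in the stated sequential sense is precisely what is needed to ensure that $I$ has relatively compact (hence, with exponential tightness, good) sublevel sets concentrated on $\mathcal{M}_1(\Gamma)$ rather than on the larger cone of sub-probability or signed measures --- i.e.\ it rules out ``escape of mass'' and pins the rate function down on $\mathcal{M}_1(\Gamma)$.

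I expect the main obstacle to be the lower bound / exposed-points argument in this infinite-dimensional weak$^*$ setting: one must verify that G\^ateaux differentiability of $\Lambda$ on $X^* = \mathcal{C}_b(\Gamma)$ genuinely produces exposed points of the convex conjugate lying in $\mathcal{M}_1(\Gamma)$, and that these are dense enough (in the $I$-sense) to cover all open sets. This is where the Polish structure of $\Gamma$, the separating property of $\mathcal{Y}$, and the reflexivity identity $X^{**} = \mathcal{Y}$ are used. Everything else --- the upper bound, goodness of $I$, and the final identification of the formula for $I(\mu)$ --- follows from exponential tightness plus routine convex-analysis bookkeeping, so I would state those briefly and concentrate the write-up on the exposed-points step, essentially reducing it to the abstract theorem in \cite[Section 4.5]{DemZei09} after checking its hypotheses line by line.
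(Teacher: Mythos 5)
Your proposal follows essentially the same route as the paper: both reduce the statement to the abstract G\"artner--Ellis theorem \cite[Corollary~4.6.14]{DemZei09} applied on $X=\mathcal{C}_b(\Gamma)^*$ with the weak$^*$ topology (whose dual is $\mathcal{Y}\cong\mathcal{C}_b(\Gamma)$), and the exposed-point machinery you sketch for the lower bound is precisely the internal proof of that cited corollary, so nothing beyond checking its hypotheses is needed there. One clarification on the remaining step: goodness of $I$ comes from exponential tightness alone, whereas the sequential continuity of $\Lambda$ at $0$ is used --- in the paper via the Daniell--Stone theorem \cite[Theorem~7.8.1.]{Bo07} --- to show that every $\mu\in\mathcal{C}_b(\Gamma)^*$ with $I(\mu)<\infty$ is a countably additive probability measure, i.e.\ $I\equiv\infty$ off $\mathcal{M}_1(\Gamma)$; this is exactly what justifies restricting the LDP to $\mathcal{M}_1(\Gamma)$, since a closed $F\subset\mathcal{M}_1(\Gamma)$ is $\tilde F\cap\mathcal{M}_1(\Gamma)$ for some closed $\tilde F\subset\mathcal{C}_b(\Gamma)^*$ and then $\inf_F I=\inf_{\tilde F}I$, a mechanism your write-up leaves implicit but which is the only genuinely new ingredient beyond the citation.
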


This lemma is a consequence of the G\"artner-Ellis theorem~\cite[Corollary~4.6.14]{DemZei09}, which already gives us that the sequence $(\mu_n)_{n\in\mathbb{N}}$ satisfies an LDP on $\mathcal{C}_b(\Gamma)^*$ with the good rate function being the Fenchel-Legendre transform 
\begin{equation}
I(\mu) = \sup_{f\in\mathcal{C}_b(\Gamma)} \left[ \langle \mu,f \rangle - \Lambda(f) \right].
\end{equation}
The LDP can then be restricted to the subset $\mathcal{M}_1(\Gamma)$ of probability measures by showing that $I(\mu) = \infty$ for every $\mu \in \mathcal{C}_b(\Gamma)^* \setminus \mathcal{M}_1(\Gamma)$, which can be seen by using the Daniell-Stone theorem \cite[Theorem 7.8.1.]{Bo07}.

This property is needed to show the upper bound of the LDP: A closed set $F$ in $\mathcal{M}_1(\Gamma)$ is the intersection of a closed set $\tilde F \subset \mathcal{C}_b(\Gamma)^*$ with $\mathcal{M}_1(\Gamma)$. Hence it is necessary that $\inf_F I = \inf_{\tilde F} I$. \\

\subsection{An LDP for bounded weights}

\begin{thm}\label{thm:LDPtrunc}
Assume that the weights $(w_i)_{i \in \mathbb{N}}$ satisfy Condition~\ref{con:weight} and furthermore that they only take values in the compact interval $E':=[0, R]$ for some $R>0$. Let the spin variables $\sigma_i$  be distributed according to an a priori measure $\a \in \mathcal{M}_1(E)$ for every $i \in \N$.
Then the pair empirical distributions $\left( L_N^{(\sigma, w)} \right)_{N \in \mathbb{N}}$ satisfy an LDP on $\mathcal{M}_1(E \times E')$ with good rate function 
\begin{equation}\begin{split}\label{ga}
I(\nu)= \begin{cases} \int P(dw ) S(\nu^w |\a) &\mbox{if } \nu(dw)=P(dw) \\ 
\infty & \mbox{else, } \end{cases}
\end{split}\end{equation}
where $S(\nu |\a)$ is the relative entropy, i.e.,
\begin{equation}
S(\nu |\a) = \int \nu(d\s) \log \frac{d\nu}{d\a}(\s).
\end{equation}
\end{thm}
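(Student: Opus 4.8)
The plan is to apply Lemma~\ref{lem1} to the sequence $\mu_N = L_N^{(\sigma,w)}$ on the Polish space $\Gamma = E \times E'$, with rate $\gamma_N = N$ and a priori randomness coming from the product measure $\a^N$ on the spins (the weights being quenched, i.e.\ deterministic given the realization $w$). Since $E \times E' = E \times [0,R]$ is compact, $\mathcal{M}_1(E \times E')$ is compact in the weak topology and exponential tightness is automatic; so the only real work is to compute the limiting logarithmic moment generating functional and to identify its Fenchel--Legendre transform with the claimed rate function.

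First I would compute, for $f \in \mathcal{C}_b(E \times E')$,
\begin{equation}\begin{split}
\Lambda(f) = \lim_{N\to\infty} \frac{1}{N}\log \a^N\Bigl(e^{N\langle L_N^{(\sigma,w)}, f\rangle}\Bigr)
= \lim_{N\to\infty} \frac{1}{N}\log \a^N\Bigl(e^{\sum_{i=1}^N f(\sigma_i, w_i)}\Bigr).
\end{split}\end{equation}
Because the $\sigma_i$ are i.i.d.\ under $\a^N$ and the $w_i$ are fixed, this factorizes as $\frac{1}{N}\sum_{i=1}^N \log \a\bigl(e^{f(\cdot,\, w_i)}\bigr)$. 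Defining $g(w) := \log \a(e^{f(\cdot,w)}) = \log \int_E e^{f(\sigma,w)}\a(d\sigma)$, which is a bounded continuous function of $w \in [0,R]$ (uniform continuity of $f$ on the compact set gives continuity of $g$; boundedness of $f$ gives boundedness of $g$), the sum is $\frac{1}{N}\sum_{i=1}^N g(w_i) = \E[g(W_N)]$, which converges to $\E[g(W)] = \int P(dw)\,\log\a(e^{f(\cdot,w)})$ by Condition~\ref{con:weight}(i) and bounded convergence. Hence
\begin{equation}
\Lambda(f) = \int P(dw)\,\log \int_E e^{f(\sigma,w)}\,\a(d\sigma).
\end{equation}
I would then check the two hypotheses of Lemma~\ref{lem1}: G\^{a}teaux differentiability follows by differentiating under the (bounded) integral signs, and continuity at $0$ along $f_n \downarrow 0$ follows from monotone/dominated convergence pushed through the logarithm and the outer $P$-integral.

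The remaining step is to verify that $I(\nu) = \sup_{f}[\langle \nu, f\rangle - \Lambda(f)]$ equals the right-hand side of~\eqref{ga}. This is the standard Donsker--Varadhan-type identification and is the part that requires the most care. For $\nu$ whose $w$-marginal is $P$, one disintegrates $\nu(d\sigma,dw) = P(dw)\,\nu^w(d\sigma)$ and, using $\langle \nu,f\rangle = \int P(dw)\int f(\sigma,w)\nu^w(d\sigma)$, one wants to push the supremum over $f$ inside the $P$-integral to get $\int P(dw)\,\sup_{f_w}[\,\int f_w\,d\nu^w - \log\a(e^{f_w})\,] = \int P(dw)\,S(\nu^w|\a)$ by the classical variational formula for relative entropy; the $\ge$ direction is immediate by restricting to $f$ of product-type-in-$w$, and the $\le$ direction needs a measurable-selection / approximation argument to assemble near-optimal $f_w$ into a single admissible $f \in \mathcal{C}_b(E\times E')$ (this is where compactness of $E'$ and an approximation of the optimal tilt by bounded continuous functions is used). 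For $\nu$ whose $w$-marginal is \emph{not} $P$: pick a continuous $h$ on $[0,R]$ with $\int h\,d\nu_w \neq \int h\,dP$, apply the functional with $f(\sigma,w) = t\,h(w)$ for which $\langle\nu,f\rangle - \Lambda(f) = t\int h\,d\nu_w - t\int h\,dP$, and let $t\to\pm\infty$ to conclude $I(\nu) = \infty$. The main obstacle is the $\le$ inequality in the relative-entropy identification, i.e.\ legitimately interchanging $\sup_f$ with $\int P(dw)$ in the infinite-dimensional setting; everything else is bookkeeping with compactness and bounded convergence.
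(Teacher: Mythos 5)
Your first half coincides with the paper's: quenched weights, i.i.d.\ spins under $\a^N$, exponential tightness from compactness of $E\times[0,R]$, the limit $\Lambda(f)=\int P(dw)\,\log\a(e^{f(\cdot,w)})$ via Condition~\ref{con:weight}(i) and bounded convergence, the G\^ateaux differentiability and continuity-at-$0$ checks, the application of Lemma~\ref{lem1}, and the scaling argument $f=t\,h(w)$, $t\to\pm\infty$, to rule out measures whose $w$-marginal differs from $P$. The divergence --- and the gap --- is in the identification of $I=\Lambda^*$ with $\nu\mapsto\int P(dw)\,S(\nu^w|\a)$. You attack it directly and correctly isolate the crux: to show that $\sup_{f\in\mathcal{C}_b(E\times E')}\int P(dw)\bigl[\nu^w(f(\cdot,w))-\log\a(e^{f(\cdot,w)})\bigr]$ is at least $\int P(dw)\,S(\nu^w|\a)$ one must assemble the $w$-dependent near-optimal tilts (essentially $\log\frac{d\nu^w}{d\a}$, which need be neither bounded nor jointly continuous, nor even continuous in $w$) into a single admissible $f\in\mathcal{C}_b(E\times E')$; this truncation/measurable-selection/approximation argument is announced but not carried out, so as written the key step is missing. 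Note also that your labels are swapped: the immediate direction is $\Lambda^*(\nu)\le\int P(dw)\,S(\nu^w|\a)$, since the integrand is pointwise dominated by the Donsker--Varadhan supremum, while the direction requiring the selection argument is $\ge$; restricting to a special class of $f$ can only lower the supremum and cannot give that bound ``immediately''.

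The paper sidesteps this obstacle by running the duality in the opposite direction: having the LDP with good rate function $I=\Lambda^*$, the inverse of Varadhan's lemma \cite[Theorem~4.5.10~a)]{DemZei09} gives $\Lambda=I^*$, so it suffices to verify that the candidate rate function $\tilde I$ of \eqref{ga} satisfies $\tilde I^{\,*}=\Lambda$ and to conclude $I=\tilde I$ by Fenchel--Legendre duality (using convexity and lower semicontinuity of $\tilde I$). In the computation of $\tilde I^{\,*}(f)=\sup_{\nu:\,\nu(dw)=P(dw)}\int P(dw)\bigl(\nu^w(f)-S(\nu^w|\a)\bigr)$ the sup--integral interchange goes in the harmless direction, because the pointwise optimizer is explicit, $\frac{d\rho^{w,*}}{d\a}=e^{f(\cdot,w)}/\a(e^{f(\cdot,w)})$, jointly continuous in $(w,\s)$, and hence glues trivially into an admissible $\nu$ with $w$-marginal $P$. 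So your route is viable in principle but incomplete at its hardest point; either supply the approximation argument (truncate $\log\frac{d\nu^w}{d\a}$, regularize in $w$, pass to the limit) or replace that step by the paper's duality argument, which is exactly designed so that only the easy interchange is ever needed.
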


\begin{proof}
We fix a weight sequence $w_i \in E'$, $i \in \N$. 
Look at the random element $L_n=\frac{1}{n}\sum_{i=1}\d_{w_i,\s_i}\in \mathcal{M}_1(E\times E')$ 
with deterministic values of $w_i$ and $\s_i$ i.i.d. drawn from $\a$. 
Recall that the product space $E \times E'$ is compact by assumption. Hence the empirical distributions $L_n$ are exponentially tight. 

The logarithmic moment generating function $\L_n(\cdot)$ at size $n$ is given by
\begin{equation}\begin{split}
\L_n(f) &:= \log \E \left[ e^{ \langle L_n, f \rangle} \right] = \log \a^n \left(\exp \left[ \frac{1}{n}\sum_{i=1}^n f(w_i,\s_i) \right] \right) \cr
&= \sum_{i=1}^n \log \a \left(\exp \left[ \frac{1}{n}f(w_i,\s) \right] \right).
\end{split}\end{equation}

We need to look at the limit of
\begin{equation}\begin{split}
&\frac{1}{n}\L_n(n f)= \frac{1}{n}\sum_{i=1}^n \log \a (\exp \left[ f(w_i,\s) \right] ).
\end{split}\end{equation}

If the empirical distribution on the $w_i$'s converges in the weak topology towards a measure $P(dw)$  
then also we have the convergence 
\begin{equation}\begin{split}\label{eq:lambda}
&\lim_{n \to \infty} \frac{1}{n}\L_n(n f)= \int P(dw ) \log \a (\exp \left[ f(w,\s) \right] )=: \L(f).
\end{split}\end{equation}

By the dominated convergence theorem follows that the map $\L:  \mathcal{C}_b(E \times E') \to \R$ is continuous at $0$. To see the G\^{a}teaux differentiability let $f,g \in \mathcal{C}_b(\Gamma)$. Then
\begin{equation}\begin{split}
\frac{1}{t} \left[ \L (f+tg) - \L(f) \right] &=  \frac{1}{t} \left[ \int P(dw) \log \left( \a \left( e^{f+tg} \right)\right) - \log \left( \a\left( e^f \right) \right) \right] \\
&=  \frac{1}{t} \E \left[ \log \left( \a \left( e^f (1+tg + R_t) \right) \right) - \log \left( \a \left( e^f\right) \right) \right],
\end{split}\end{equation} 
where $R_t$ is the quadratic remainder in the Taylor expansion of $e^{tg}$. This gives us
\begin{equation}\begin{split}
\frac{1}{t} \left[ \L (f+tg) - \L(f) \right] 
&=  \frac{1}{t} \E \left[ \log \left( \frac{\a \left( e^f (1+tg + R_t) \right)}{\a \left( e^f\right)} \right) \right] \\
&= \frac{1}{t} \E \left[ \log \left( 1 + t \frac{\a (e^f g)}{\a (e^f)} + \frac{\a(e^f R_t)}{\a(e^f)} \right) \right].
\end{split}\end{equation} 
Expanding the logarithm shows that 
\begin{equation}
\lim_{t \to0} \frac{1}{t} \left[ \L (f+tg) - \L(f) \right] = \E \left[ \frac{\a(e^fg)}{\a(e^f)} \right],
\end{equation}
which shows that $\L$ is indeed G\^{a}teaux differentiable. Hence we can apply Lemma \ref{lem1} and obtain that the empirical distributions satisfy the LDP with rate function $I(\cdot)$ given by the Fenchel-Legendre transform of $\L(\cdot)$, i.e.,
\begin{equation}
I(\nu) = \sup_{f \in \mathcal{C}_b(E \times E')} \left[ \langle \nu, f \rangle - \L(f) \right].
\end{equation}

In the following we prove that 
\begin{equation}\begin{split}\label{eq:rate}
I(\nu)= \begin{cases} \int P(dw ) S(\nu^w |\a) &\mbox{if } \nu(dw)=P(dw) \\ 
\infty & \mbox{else. } \end{cases}
\end{split}\end{equation}
As the empirical distributions $L_n$ satisfy an LDP with good rate function $I$ it follows from Varadhan's lemma~\cite[Theorem~4.5.10~a)]{DemZei09}, that $\L(\cdot)$ is itself the Fenchel-Legendre transform of $I$, i.e.
\begin{equation}\label{eq:FLT}
\L(f) = \sup_{\nu \in \mathcal{C}_b(E \times E')^*} \left[ \langle \nu , f \rangle - I(\nu) \right].
\end{equation}

We already noted that $I(\nu) = \infty$ for all $\nu \in \mathcal{C}_b(E \times E')^* \setminus \mathcal{M}_1(E \times E')$. The same is true for probability measures $\nu$ whose marginal distribution on the weight space $E'$ is not equal to $P$, i.e. $\nu(dw) \neq P(dw)$:

\begin{equation}\begin{split}
I(\nu) &= \sup_{f \in \mathcal{C}_b(E\times E')} \left[ \nu(f) - \L(f) \right] 
\geq \sup_{g \in \mathcal{C}_b(E')} \left[ \nu(g) - P(g) \right],
\end{split}\end{equation}
where the supremum in the second line is taken over all functions $g \in \mathcal{C}_b(E')$ which are constant on the local state space. 
As $\nu(dw) \neq P(dw)$ there exists a function $\tilde g \in \mathcal{C}_b(E')$ s.t. $\nu(\tilde g) \neq P (\tilde g ) $ and therefore
\begin{equation}
I(\nu) \geq \sup_{\l \in \R} \l(\nu (\tilde g) - P(\tilde g)) = \infty.
\end{equation}

Via duality of the Fenchel-Legendre transform we can prove \eqref{eq:rate} by checking that representation \eqref{eq:lambda} is recovered when plugging \eqref{eq:rate} into \eqref{eq:FLT}. Doing so we arrive at
\begin{equation}\begin{split}
 \sup_{\nu \in \mathcal{C}_b(E \times E')^*} \left[ \nu(f)- I(\nu) \right] 
&=\sup_{\nu \in \mathcal{M}_1(E\times E'): \nu(dw)=P(dw)} \left[ \nu(f)- \int P(dw ) S(\nu^w |\a) \right] \cr
&=\sup_{\nu \in \mathcal{M}_1(E\times E'): \nu(dw)=P(dw)}  \left[ \int P(dw )( \nu^w(f)- S(\nu^w |\a)) \right]. \cr
\end{split}\end{equation}
But the sup now breaks down into a sup over the marginals $\nu^w \in \mathcal{M}_1(E)$ for which 
we know that 
\begin{equation}\begin{split}
&\sup_{\rho \in \mathcal{M}_1(E)}( \rho(f)- S(\rho |\a))=\log \a (\exp [ f(w,\s) ] ). \cr
\end{split}\end{equation}
This finishes the proof. 
\end{proof}

\subsection{LDP for weights with strongly finite mean}
We next prove an LDP for the mean-field expression derived in Proposition~\ref{pro:AnnPrRep}. Instead of assuming that the second moment is finite, we prove this under the weaker condition that the weight sequence has {\em strongly finite mean}, that is, we assume that, for all $\varepsilon>0$,
\begin{equation}\begin{split}\label{ggg}
&\sup_{N \in \N}  L_N (w^{1+\varepsilon})<\infty.
\end{split}\end{equation}

\begin{thm}\label{thm:AnnPr}
Assume that the weights $(w_i)_{i \in \mathbb{N}}$ satisfy Condition~\ref{con:weight}(i) and have strongly finite mean. Let $\Phi: E \times E \to \R$ be a bounded pair interaction potential and $\a \in \mathcal{M}_1(E)$ an a priori measure on the state space. Then,
\begin{equation}\begin{split}
\lim_{N \to \infty} \frac{1}{N} &\log \a^N \left(  \exp \left( N  L^{(\s,w)}_N\otimes L^{(\s,w)}_N (U)\right) \right) \\
&= \sup_{\nu \in \mathcal{M}_1(E\times E'): \nu(dw) = P(dw)} \left( \nu \otimes \nu (U) - \int P(dw) S(\nu^w\mid\a) \right)< \infty,
\end{split}\end{equation}
where
\begin{equation}
U(\sigma, \sigma', w, w') = \frac{ww'}{2 \E[W]} e^{ \Phi(\sigma, \sigma')}.
\end{equation}

\end{thm}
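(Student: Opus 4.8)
\emph{Proof plan.} The plan is to reduce to the compact-weight situation of Theorem~\ref{thm:LDPtrunc} by truncating the weights at a level $R$, to apply Varadhan's lemma there, and then to let $R\to\infty$, using the strongly finite mean assumption~\eqref{ggg} to control the truncation error \emph{uniformly in $N$}. Throughout, write $\Phi_\pm$ for the supremum/infimum of $\Phi$ over $E\times E$, let $W,W'$ be independent with law $P$, and abbreviate $X_N^w:=N\,L_N^{(\s,w)}\otimes L_N^{(\s,w)}(U)$ and $\psi_N^w:=\tfrac1N\log\a^N(e^{X_N^w})$. From~\eqref{ggg} and $W_N\stackrel{\mathcal{D}}{\longrightarrow}W$ one gets $\kappa:=\sup_N L_N(w)<\infty$ (using $w\le 1+w^{1+\e}$) and $\E[W]\le 1+\E[W^{1+\e}]<\infty$, so $U$ is well defined; writing $\Psi$ for the supremum on the right-hand side of the asserted identity, testing with $\nu=P\otimes\a$ (for which $S(\nu^w|\a)\equiv0$) and bounding $\nu\otimes\nu(U)\le\tfrac{e^{\Phi_+}}{2\E[W]}\E[W]^2$ shows $\tfrac{\E[W]}2 e^{\Phi_-}\le\Psi\le\tfrac{\E[W]}2 e^{\Phi_+}<\infty$.

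\emph{Truncated model and uniform comparison.} Fix $R>0$ and put $w_i^{(R)}:=w_i\wedge R$. Since $x\mapsto x\wedge R$ and $x\mapsto(x\wedge R)^2$ are bounded and continuous, the truncated weights still satisfy Condition~\ref{con:weight}, now with asymptotic law $P^{(R)}:=\mathrm{Law}(W\wedge R)$ supported on $[0,R]$; Theorem~\ref{thm:LDPtrunc} thus gives an LDP for $(L_N^{(\s,w^{(R)})})_N$ on the compact space $\mathcal{M}_1(E\times[0,R])$ with good rate function $\int P^{(R)}(dw)\,S(\nu^w|\a)$ on $\{\nu(dw)=P^{(R)}(dw)\}$ and $+\infty$ otherwise. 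Since $U$ (with its fixed constant $\E[W]$) is bounded and continuous on $E\times[0,R]$, the map $\nu\mapsto\nu\otimes\nu(U)$ is bounded and continuous on $\mathcal{M}_1(E\times[0,R])$, so Varadhan's lemma yields
\[
\lim_{N\to\infty}\psi_N^{w^{(R)}}=\sup_{\substack{\nu\in\mathcal{M}_1(E\times[0,R])\\ \nu(dw)=P^{(R)}(dw)}}\Bigl(\nu\otimes\nu(U)-\int P^{(R)}(dw)\,S(\nu^w|\a)\Bigr)=:\Psi(R).
\]
Next, using $0\le w_iw_j-w_i^{(R)}w_j^{(R)}\le(w_i-w_i^{(R)})w_j+w_i(w_j-w_j^{(R)})$ and $e^{\Phi}\le e^{\Phi_+}$ one gets, uniformly over spin configurations, $\tfrac1N\|X_N^w-X_N^{w^{(R)}}\|_\infty\le\tfrac{e^{\Phi_+}}{\E[W]}L_N(w)\,L_N((w-R)^+)$. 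Since $|\tfrac1N\log\a^N(e^X)-\tfrac1N\log\a^N(e^Y)|\le\tfrac1N\|X-Y\|_\infty$ and $(w-R)^+\le w\,\indic{w>R}\le R^{-\e}w^{1+\e}$, hypothesis~\eqref{ggg} gives, for every $\e>0$,
\[
\sup_N\bigl|\psi_N^w-\psi_N^{w^{(R)}}\bigr|\le\frac{e^{\Phi_+}\kappa}{\E[W]}\,R^{-\e}\,\bigl(\textstyle\sup_N L_N(w^{1+\e})\bigr)=:\eta(R),
\]
with $\eta(R)\to0$ as $R\to\infty$. Combined with the previous display this forces $\limsup_N\psi_N^w-\liminf_N\psi_N^w\le 2\eta(R)$ for every $R$, so $\lim_N\psi_N^w$ exists, and moreover $|\lim_N\psi_N^w-\Psi(R)|\le\eta(R)$, whence $\lim_N\psi_N^w=\lim_{R\to\infty}\Psi(R)$.

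\emph{Identification of the limit.} It remains to show $\lim_{R\to\infty}\Psi(R)=\Psi$. Disintegrate measures with weight-marginal $P$ (resp.\ $P^{(R)}$) as $P(dw)\,\nu^w(d\s)$ (resp.\ $P^{(R)}(dv)\,\rho^v(d\s)$). For $\Psi(R)\le\Psi$: given $(\rho^v)_{v\in[0,R]}$, set $\nu^w:=\rho^{w\wedge R}$; then $\int P(dw)S(\nu^w|\a)=\E[S(\rho^{W\wedge R}|\a)]=\int P^{(R)}(dv)S(\rho^v|\a)$, while
\[
\nu\otimes\nu(U)=\tfrac1{2\E[W]}\E\bigl[WW'\,\rho^{W\wedge R}\otimes\rho^{W'\wedge R}(e^\Phi)\bigr]\ge\tfrac1{2\E[W]}\E\bigl[(W\wedge R)(W'\wedge R)\,\rho^{W\wedge R}\otimes\rho^{W'\wedge R}(e^\Phi)\bigr],
\]
using $\rho^{W\wedge R}\otimes\rho^{W'\wedge R}(e^\Phi)\ge e^{\Phi_-}>0$; the last expression is the value of $(\rho^v)$ in the supremum defining $\Psi(R)$, so taking suprema gives $\Psi\ge\Psi(R)$. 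For $\liminf_R\Psi(R)\ge\Psi$: fix $\delta>0$, pick $\nu$ with $\nu\otimes\nu(U)-\int P(dw)S(\nu^w|\a)\ge\Psi-\delta$, and set $\rho^v:=\nu^v$ for $v<R$, $\rho^R:=P(W\ge R)^{-1}\int_{\{w\ge R\}}\nu^w\,P(dw)$, $\mu^{(R)}:=P^{(R)}(dv)\rho^v(d\s)$. Convexity of $S(\cdot|\a)$ gives $\int P^{(R)}(dv)S(\rho^v|\a)\le\int P(dw)S(\nu^w|\a)$, and splitting the expectation according to whether $\max(W,W')<R$ (where the integrands coincide) yields $|\mu^{(R)}\otimes\mu^{(R)}(U)-\nu\otimes\nu(U)|\le 2e^{\Phi_+}\,\E[W\indic{W\ge R}]\to0$. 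Hence $\Psi(R)\ge\Psi-\delta-2e^{\Phi_+}\E[W\indic{W\ge R}]$; letting $R\to\infty$ and then $\delta\downarrow0$ gives $\liminf_R\Psi(R)\ge\Psi$. Therefore $\lim_N\psi_N^w=\Psi<\infty$, as claimed.

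\emph{Main obstacle.} The crux is the non-compactness of the weight space: there $\nu\otimes\nu(U)$ is unbounded, so Varadhan cannot be applied directly, and one must instead show the truncation error $|\psi_N^w-\psi_N^{w^{(R)}}|$ is small \emph{uniformly in $N$} — which is exactly where the strongly finite mean condition~\eqref{ggg} enters, in place of the $L^2$-bound of Condition~\ref{con:weight} — together with the convergence of the compact-weight variational problems $\Psi(R)$ to the full one; the only delicate point in the latter is the redistribution of the tail mass onto the truncation level, handled via convexity of the relative entropy.
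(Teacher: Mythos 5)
Your proof is correct, and although it follows the same broad strategy as the paper -- truncate the weights, invoke the compact-weight LDP of Theorem~\ref{thm:LDPtrunc} plus Varadhan's lemma, then let $R\to\infty$ -- the implementation is genuinely different in two places. First, the truncation itself: the paper splits the vertex set into sites with $w_i\le R$ and $w_i>R$, so its truncated empirical measure runs at the random speed $N^{\le R}$ with limiting weight law $P(\cdot\mid W\le R)$ (which forces $R$ to avoid atoms of $P$), and the cross/tail remainder $R_N$ is controlled only after taking $N\to\infty$ and then $R\to\infty$; you instead cap the weights, $w_i^{(R)}=w_i\wedge R$, which keeps the speed $N$, makes the limiting law $\mathrm{Law}(W\wedge R)$ (an atom at $R$ is harmless), and, more importantly, yields the sup-norm comparison $\sup_N|\psi_N^w-\psi_N^{w^{(R)}}|\le\eta(R)\to 0$ \emph{uniformly in $N$}, from which existence of the limit drops out immediately, whereas the paper's remainder estimate is only asymptotic. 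Second, the identification $\Psi(R)\to\Psi$: the paper first proves existence of maximizers (weak compactness of the constraint set of measures with weight-marginal $P$, upper semicontinuity) and then restricts, respectively extends by $\a$ on the tail, these maximizers; you avoid the existence step entirely by working with near-maximizers, mapping truncated profiles via $\nu^w:=\rho^{w\wedge R}$ in one direction and lumping the tail mass onto the atom at $R$ via convexity (Jensen) of $S(\cdot\mid\a)$ in the other. What your route buys is a cleaner quantitative error bound and no discussion of atoms of $P$ or of attainment of the supremum; what the paper's route buys is the explicit maximizer, which it reuses later for the mean-field equation. Both arguments share the same implicit standing assumption (also present in the paper) that $e^{\Phi}$ is continuous so that $\nu\mapsto\nu\otimes\nu(U)$ is weakly continuous for Varadhan, and both hinge on the strongly finite mean hypothesis~\eqref{ggg} exactly where you say they do.
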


\begin{proof}
Write convex combinations of the joint empirical measure 
in terms of the truncated empirical measure 
\begin{equation}\begin{split}
 L_N=\frac{1}{N}\sum_{i\in [N]} \d_{w_i,\s_i}
 &=c_N(R) L^{\leq R}_N +  (1-c_N(R)) L^{> R}_N,
 \end{split}\end{equation}
with the empirical distribution of the truncated variables 
\begin{equation}\begin{split}
 L^{\leq R}_N &= \frac{1}{N^{\leq R}}\sum_{i\in [N]: w_i\leq R} \d_{w_i,\s_i}, \quad L_N^{>R} = \frac{1}{N^{>R}} \sum_{i\in [N]: w_i >R} \delta_{w_i, \sigma_i}, \cr
 N^{\leq R} &=\#\{i\in [N]: w_i\leq R\}, \quad N^{> R} =\#\{i\in [N]: w_i > R\},\cr
 \end{split}\end{equation}
and $c_N(R) = N^{\leq R} / N$. 

Let us assume that the truncation threshold $R$ does not 
get mass w.r.t.\ the limiting distribution $P$. 
By the assumed weak 
convergence of the empirical distribution of the weights 
we then have the convergence against the conditional distribution 
$\frac{1}{N^{\leq R}}\sum_{i\in [N]: w_i\leq R} \d_{w_i}\rightarrow 
P(\cdot | W\leq R)$. 
 
We have proved that the truncated joint distribution 
$L^{\leq R}_N$ obeys an LDP with rate $N^{\leq R}$
 and rate function 

\begin{equation}\begin{split}
I^R(\nu)= \begin{cases} \int P(dw | W\leq R) S(\nu^w |\a) &\mbox{if } \nu(dw)=P(dw| W\leq R) \\ 
\infty & \mbox{else. } \end{cases}
\end{split}\end{equation}

Now we use that from the weak convergence of the weight distribution 
we have that the tails converge 
$\lim_{N \to \infty} c_N(R)=P(W\leq R)$. 
Using this truncation and taking first the limit $N\uparrow \infty$ and then the limit $R\uparrow \infty$ yields the desired result: 

To be more detailed, we introduce the above decomposition into 
\begin{equation}\begin{split}
&N  L^{(\s,w)}_N\otimes L^{(\s,w)}_N(U) 
=N^{\leq R} L^{\leq R}_N \otimes L^{\leq R}_N(U)
+ R_N,
\end{split}\end{equation}
where 
\begin{equation}\begin{split}
R_N &= N^{\leq R}(c_N(R) -1 ) L^{\leq R}_N\otimes L^{\leq R}_N(U)
 + 2 N^{\leq R}(1-c_N(R)) L^{\leq R}_N\otimes L^{> R}_N(U) \cr
& \qquad +  N (1-c_N(R))^2 L^{> R}_N\otimes L^{> R}_N(U).
\end{split}\end{equation}

We have by Varadhan's lemma \cite[Theorem 4.3.1]{DemZei09} and the LDP for truncated 
weight space (Theorem \ref{thm:LDPtrunc}) that 
\begin{equation}\begin{split}\label{MaxTrunc}
\lim_{N\to \infty}& \frac{1}{N^{\leq R}}\log \a^N (\exp 
N^{\leq R} L^{\leq R}_N\otimes L^{\leq R}_N(U)
)\cr
=&
\sup_{\bar\nu\in \M_1(E\times \R_+): \bar \nu(d w)=P(dw|W\leq R)}
\Biggl( \bar \nu \otimes \bar \nu (U)
- \int P (dw | W \leq R) S(\nu^{w}|\a )\Biggr)\cr
=&:F(\Phi,P,\a; R).
\end{split}\end{equation}

Note that $|U (\s, \s', w, w')| \leq C w w' $ where $C$ is a spin-independent constant
which depends only on the sup-norm of the potential acting 
on the pairs of spins $\Phi$, i.e. $C = e^{\frac{1}{2} ||\Phi||_\infty}/ \E W $.
Using this we find 
\begin{equation}\begin{split}
\frac{|R_N|}{C} & \leq N^{\leq R} |c_N(R)-1 | (L^{\leq R}_N(w))^2
+ 2 N^{\leq R}(1-c_N(R)) L^{\leq R}_N(w)L^{>R}_N(w) \cr
& \qquad + N (1-c_N(R))^2 L^{>R}_N(w)^2.
\end{split}\end{equation}
Recall our hypothesis~\eqref{ggg}, so that we have that the conditional empirical probabilities converge 
towards their corresponding values in the limiting distribution $W$, for instance 
\begin{equation}\begin{split}\label{g}
&\lim_{N\to\infty} L^{>R}_N(w)=\E(W|W > R).
\end{split}\end{equation}

\begin{equation}\begin{split}
\frac{|R_N|}{C N} 
\leq& \frac{N^{>R}}{N^{\leq R}} \left( \frac{1}{N}\sum_{i=1}^N w_i 1_{w_i\leq R} \right)^2\cr
&+ 2 \left( \frac{1}{N}\sum_{i=1}^N w_i 1_{w_i\leq R} \right) \left( \frac{1}{N}\sum_{i=1}^N w_i 1_{w_i
> R} \right)+ \left( \frac{1}{N}\sum_{i=1}^N w_i 1_{w_i> R} \right)^2.
\end{split}\end{equation}

Take first the $N$-limit to get for the r.h.s. (if the point
 $R$ gets no mass w.r.t.\ the distribution of $W$) that

\begin{equation}\begin{split}
\limsup_{N\to\infty}\frac{|R_N|}{C N} 
&\leq \frac{P (W > R)}{P (W \leq R)} (\E(W 1_{W\leq R}))^2 \cr
& \qquad + 2 \E(W 1_{W\leq R})\E(W 1_{W> R}) 
+  \E(W 1_{W> R})^2.
\end{split}\end{equation}
Now take the limit $R\uparrow \infty$ to get zero for the r.h.s. 

It remains to see that 
\begin{equation}\begin{split}
&\lim_{R \to \infty}F(\Phi,P,\alpha; R)=F(\Phi,P,\alpha):=\sup_{\bar \nu(d w)=P(dw)}
\Biggl( \bar \nu \otimes \bar \nu (U)
- \int P (dw) S(\bar \nu^{w}|\a )\Biggr).
\end{split}\end{equation}

We have that there exists a maximizer $\bar \nu^*$ such that 
\begin{equation}\begin{split}\
F(\Phi, P, \alpha)
&=\bar \nu^* \otimes \bar \nu^* (U)
- \int P (dw) S(\bar \nu^{w,*}|\a ).
\end{split}\end{equation}
This is the case since the set of measures $M_P:=\{ \nu \in \mathcal{M}_1(E \times E') : \nu(dw) = P(dw) \}$ is weakly closed and tight and therefore weakly compact by Prohorov's theorem \cite[Theorem 16.3]{K02}. The tightness of $M_P$ is easily seen as the state space $E$ is itself compact and every measure $\nu \in M_P$ has $P(dw)$ as its marginal on $E'$. As the single measure $P$ is tight, there exists for every $\epsilon >0$ a compact set $K \subset E'$ s.t. $P(K^c) < \epsilon$ and hence $\nu((E \times K)^c) < \epsilon$ for every $\nu \in M_P$. That $M_P$ is weakly closed follows from a simple application of the continuous mapping theorem.
As $\nu \mapsto I(\nu)$ is lower semicontinuous, the mapping 
$\nu \mapsto \nu \otimes \nu(U) - I(\nu)$
is upper semicontinuous and hence attains its maximum on the weakly compact set $M_P$.

We have the trivial bounds 
\begin{equation}\begin{split}
&\frac{\E[W]}{2} (\a\otimes \a) \left(e^{ \Phi(\cdot,\cdot)} \right) 
\leq F(\Phi, P, \alpha) \leq \frac{\E[W]}{2}  \Vert e^{ \Phi}\Vert, 
\end{split}\end{equation}
where the lower bound is obtained by estimating 
the sup from below by putting the spin marginals 
all equal to $\a$. The upper bound is obtained  by estimating 
the entropy from below by $0$ and using the sup-norm 
on the spin part of $U$. 

In particular 
\begin{equation}\begin{split}
&0\leq \int P (dw) S(\nu^{w,*}|\a )\leq \tilde C < \infty,
\end{split}\end{equation}
at a maximizer for the untruncated problem with some constant $\tilde C$. 
So we get by monotone convergence that 
\begin{equation}\begin{split}
&\lim_{R\uparrow \infty} \int_0^R P (dw) S(\nu^{w,*}|\a )=\int P (dw) S(\nu^{w,*}|\a ).
\end{split}\end{equation}

From that follows also that 
\begin{equation}\begin{split}
\lim_{R\uparrow\infty}&
\Biggl( \int P(dw |W\leq R)\int P(dw' |W\leq R) (\nu^{w,*}\otimes\nu^{w',*}) (U)
- \int P (dw|W\leq R) S(\nu^{w,*}|\a )\Biggr)\cr
&=\bar \nu^* \otimes \bar \nu^* (U)
- \int P (dw) S(\nu^{w,*}|\a ).
\end{split}\end{equation}

Since we have that the true sup of the truncated problem 
is bounded from below by the restricted expression taken at a maximizer 
for the untruncated problem, that is 
\begin{equation}\begin{split}
F(\Phi, P, \alpha ; R) & \geq  \int P(dw |W\leq R)\int P(dw' |W\leq R) (\nu^{w,*}\otimes\nu^{w',*}) (U) \cr
&\qquad - \int P (dw|W\leq R) S(\nu^{w,*}|\a ),
\end{split}\end{equation}
one inequality of the desired claim follows, namely 
\begin{equation}\begin{split}
&\liminf_{R \to\infty} F_R\geq F(\Phi, P , \alpha).
\end{split}\end{equation}

To get the opposite bound 
look at the true maximizers at size $R$, and use conditional measures 
$\a$ for $w\geq R$ to define conditional measures for all values of $w$: 

Let $ \nu^{R,*}$ be the maximizer for \eqref{MaxTrunc}, i.e.,
\begin{equation}
F(\Phi, P, \a; R) = \bar \nu^{R,*} \otimes \bar \nu^{R,*} (U)
- \int P (dw) S(\nu^{R,*,w}|\a ).
\end{equation}
Define the measure $\tilde \nu ^R \in \mathcal{M}_1(E \times E')$ with marginal $P$ on the weight space $E'$ and marginal on the local state space $E$ given by 
\begin{equation}\begin{split}\label{condM}
\tilde \nu^{R, w}= \begin{cases} \nu^{R, *,w} &\mbox{if } w \leq R \\ 
\a & \mbox{else. } \end{cases}
\end{split}\end{equation}
Then
\begin{equation}\begin{split}
F(\Phi, P, \alpha) &\geq \tilde \nu^{R} \otimes \tilde \nu^{R} (U) - \int P(dw) S(\tilde \nu^{R,w}|\a ) \\
&\geq \int_0^R P(dw) \int_0^R P(dw') \nu^{R, *, w} \otimes \nu^{R,*,w'} (U) - \int_0^R P(dw) S(\nu^{R,*,w} \mid \a) \\
&= F(\Phi, P, \a ; R),
\end{split}\end{equation}
for all $R>0$ and hence $F(\Phi, P, \alpha) \geq \limsup_{R\to \infty} F(\Phi, P, \a; R)$.
\end{proof}
Combining the above theorem with Proposition~\ref{pro:AnnPrRep}, proves the following corollary.
\begin{cor}\label{cor:AnnPr}
Assume that the weights $(w_i)_{i \in \mathbb{N}}$ satisfy Condition~\ref{con:weight}. Let $\Phi: E \times E \to \R$ be a bounded pair interaction potential and $\a \in \mathcal{M}_1(E)$ an a priori measure on the state space. Then the annealed pressure exists in the thermodynamic limit $N \to\infty$ and is given by
\begin{equation}\begin{split}
\psi(\Phi, P, \a) &= \lim_{N \to \infty} \frac{1}{N}  \log Q_N^w \a^N \left( e^{\sum_{(i,j) \in E_N} \Phi(\cdot, \cdot)}\right) \\
&= \sup_{\nu \in \mathcal{M}_1(E\times E'): \nu(dw) = P(dw)} \left( \nu \otimes \nu (U) - \int P(dw) S(\nu^w\mid\a) \right)< \infty,
\end{split}\end{equation}
where
\begin{equation}
U(\sigma, \sigma', w, w') = \frac{ww'}{2 \E[W]} e^{ \Phi(\sigma, \sigma')}.
\end{equation}

\end{cor}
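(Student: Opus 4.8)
The plan is to assemble the corollary from the two pieces already in hand, namely the exact rewriting of the random-graph expectation from Proposition~\ref{pro:AnnPrRep} and the variational formula for the mean-field exponential integral from Theorem~\ref{thm:AnnPr}. The first step is to observe that Condition~\ref{con:weight} is strictly stronger than what Theorem~\ref{thm:AnnPr} requires: indeed the assumed $L^2$-convergence $\E[W_N^2]\to\E[W^2]<\infty$ implies, in particular, that $\sup_N L_N(w^{1+\varepsilon})<\infty$ for every $\varepsilon\in(0,1]$ (for $\varepsilon>1$ one does not even need it, since $w^{1+\varepsilon}\le 1+w^2$ up to constants on $\R_{>0}$, or more carefully by interpolation $w^{1+\varepsilon}\le w^2+1$ when $\varepsilon\le 1$), so the weights have strongly finite mean in the sense of~\eqref{ggg}. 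Hence both Proposition~\ref{pro:AnnPrRep} and Theorem~\ref{thm:AnnPr} are applicable under the single hypothesis of the corollary.

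Next I would invoke Theorem~\ref{thm:AnnPr} to conclude that the limit
\begin{equation}
\lim_{N\to\infty}\frac1N\log\a^N\!\left(\exp\!\left(N\,L^{(\s,w)}_N\otimes L^{(\s,w)}_N(U)\right)\right)
\end{equation}
exists, is finite, and equals the stated supremum over $\nu\in\M_1(E\times E')$ with weight-marginal $P$. In particular this shows the thermodynamic limit $\psi(\Phi,P,\a)=\lim_N\psi_N^w(\Phi,\a)$ is well-defined and finite, which is exactly the hypothesis under which Proposition~\ref{pro:AnnPrRep} produces the identification
\begin{equation}
\psi(\Phi,P,\a)=\lim_{N\to\infty}\frac1N\log\a^N\!\left(\exp\!\left(N\,L^{(\s,w)}_N\otimes L^{(\s,w)}_N(U)\right)\right).
\end{equation}
Chaining these two equalities yields the displayed formula for $\psi(\Phi,P,\a)$, with the same $U(\s,\s',w,w')=\frac{ww'}{2\E[W]}e^{\Phi(\s,\s')}$ in both places, which completes the proof.

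The only genuine subtlety — and the step I expect to need the most care — is the order of the logical dependencies: Proposition~\ref{pro:AnnPrRep} is stated conditionally on the existence of the thermodynamic limit, so one must first extract that existence from Theorem~\ref{thm:AnnPr} (whose proof does not presuppose it, working instead directly with the mean-field integral produced by the exact rewriting in the proof of Proposition~\ref{pro:AnnPrRep}, i.e.\ with the remainder terms $R_N,R_N',R_N''$ and $-\tfrac12\sum p_{i,j}$ all shown there to be $o(N)$). Concretely, I would recall from the proof of Proposition~\ref{pro:AnnPrRep} that
\begin{equation}
\frac1N\log Q_N^w\a^N\!\left(e^{\sum_{(i,j)\in E_N}\Phi(\cdot,\cdot)}\right)
=\frac1N\log\a^N\!\left(e^{N L^{(\s,w)}_N\otimes L^{(\s,w)}_N(U)}\right)+\frac1N\Bigl(R_N+R_N'+R_N''-\tfrac12\textstyle\sum_{i,j}p_{i,j}\Bigr),
\end{equation}
note that the second term vanishes as $N\to\infty$ by the estimates in that proof together with~\eqref{eq-2ndmomentconv}, and then apply Theorem~\ref{thm:AnnPr} to the first term. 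This makes both the existence of $\psi(\Phi,P,\a)$ and its value manifest in one stroke, so no circularity arises.
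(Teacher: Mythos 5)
Your proposal is essentially the paper's own proof: the paper's entire argument for the corollary is the single remark ``combining the above theorem with Proposition~\ref{pro:AnnPrRep}'', and your added checks---that Condition~\ref{con:weight}(ii) yields the strongly finite mean hypothesis~\eqref{ggg} of Theorem~\ref{thm:AnnPr} (via $w^{1+\varepsilon}\leq 1+w^2$ for $\varepsilon\leq 1$, i.e.\ uniform integrability of the weights), and that the existence of the limit is supplied by Theorem~\ref{thm:AnnPr} so that no circularity with the conditional statement of Proposition~\ref{pro:AnnPrRep} arises---only make explicit what the paper leaves implicit. One caveat, inherited from the paper's ``only the first exponential is interesting'': the spin-independent term $-\tfrac12\sum_{i,j}p_{i,j}$ is not $o(N)$ but behaves like $-N\E[W]/2$, so in your last display it is dropped as an irrelevant additive constant (exactly as in the statement of Proposition~\ref{pro:AnnPrRep}) rather than shown to vanish; with that understanding your chaining of the two results is precisely the intended argument.
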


\section{Functional mean-field equation}

\subsection{Derivation of the mean-field equation}

As we have seen in Corollary~\ref{cor:AnnPr} the pressure of the annealed model is in our setting always given by
\begin{equation}\begin{split}\label{AnnPrMax}
\psi(\Phi, P, \alpha) = \max_{ \nu(d w)=P(dw)}
\Biggl(  \nu \otimes  \nu (U)
- \int P (dw) S( \nu^{w}|\a )\Biggr).
\end{split}\end{equation}
As it turns out the stationary points of the functional which is explicitly given on the r.h.s. of \eqref{AnnPrMax} always meet a certain type of mean-field equation: 

\begin{thm}\label{thm:MF}
The maximizer on the r.h.s. of \eqref{AnnPrMax} is of the form
\begin{equation}\begin{split}
\frac{d \nu^{w}}{d \a}(\s)=\frac{d \nu^{w, V}}{d \a}(\s)
:=\frac{\exp\Bigl( w V(\s)         \Bigr) }{\int d\a(\tilde \s)\exp\Bigl(   w V(\tilde\s)         \Bigr) },
\end{split}\end{equation}
where $V: E \to \R$ satisfies
\begin{equation}\begin{split}
V(\s)=\int P^{sb}(dw )\frac{\int d\a(\tilde \s)e^{\Phi(\s,\tilde \s)}\exp\Bigl(   w V(\tilde \s)         \Bigr) }{\int d\a(\tilde \s)\exp\Bigl(  w V(\tilde\s)         \Bigr) }.
\end{split}\end{equation}
\end{thm}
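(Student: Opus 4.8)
The plan is to characterize the maximizer of the functional
\[
\mathcal{F}(\nu) = \nu\otimes\nu(U) - \int P(dw)\, S(\nu^w\mid\alpha)
\]
over the weakly compact set $M_P=\{\nu\in\mathcal M_1(E\times E'):\nu(dw)=P(dw)\}$ by a first-variation (Euler--Lagrange) argument. Since, by the proof of Theorem~\ref{thm:AnnPr}, a maximizer $\nu^*$ exists and its conditional entropy integral is finite, the idea is to perturb the family of conditional measures $(\nu^{*,w})_{w}$ along admissible directions and set the derivative to zero. The key structural observation is that $U(\sigma,\sigma',w,w')=\frac{ww'}{2\E[W]}e^{\Phi(\sigma,\sigma')}$ factorizes into a weight part and a spin part, so that $\nu\otimes\nu(U)$ depends on $\nu$ only through the (weight-weighted) averaged spin kernel
\[
K(\sigma,\sigma') \;=\; \frac{1}{2\E[W]}\int P(dw)\,w\,\nu^w(d\sigma)\,\int P(dw')\,w'\,\nu^{w'}(d\sigma')\,e^{\Phi(\sigma,\sigma')},
\]
and the quadratic term is linear in each of the two slots of $\nu$ separately when computing the variation.

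First I would fix a weight value $w$ and an admissible perturbation of the conditional measure $\nu^{*,w}\rightsquigarrow \nu^{*,w}+t(\rho-\nu^{*,w})$ with $\rho\in\mathcal M_1(E)$ (keeping the $P$-marginal intact, so the perturbation acts fibrewise), and compute $\frac{d}{dt}\big|_{t=0}\mathcal F$. The entropy term contributes $-P(\{w\})\cdot(\text{relative-entropy derivative})$, handled just as the Gâteaux computation already carried out in the proof of Theorem~\ref{thm:LDPtrunc}; the quadratic term contributes, because of the factorization, $2\cdot\frac{w}{2\E[W]}\int (\rho-\nu^{*,w})(d\sigma)\big(\int P(dw')w'\,\nu^{*,w'}(d\tilde\sigma)\,e^{\Phi(\sigma,\tilde\sigma)}\big)$. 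Introducing the size-biased law $P^{sb}(dw)=\frac{w\,P(dw)}{\E[W]}$ (equivalently $\int P^{sb}(dw)(\cdots)=\frac{1}{\E[W]}\int P(dw)\,w\,(\cdots)$) and defining
\[
V(\sigma):=\int P^{sb}(dw')\,\frac{\int d\alpha(\tilde\sigma)\,e^{\Phi(\sigma,\tilde\sigma)}\exp(w'V(\tilde\sigma))}{\int d\alpha(\tilde\sigma)\,\exp(w'V(\tilde\sigma))},
\]
the stationarity condition at every $w$ in the support of $P$ becomes
\[
\int(\rho-\nu^{*,w})(d\sigma)\Big(w\,V(\sigma)-\log\frac{d\nu^{*,w}}{d\alpha}(\sigma)\Big)=0\quad\text{for all }\rho\in\mathcal M_1(E),
\]
which forces $\log\frac{d\nu^{*,w}}{d\alpha}(\sigma)=w\,V(\sigma)+\text{const}$, i.e. the claimed Gibbsian form $\frac{d\nu^{*,w}}{d\alpha}(\sigma)=\exp(wV(\sigma))/\int d\alpha(\tilde\sigma)\exp(wV(\tilde\sigma))$. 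Substituting this form back into the definition of the averaged kernel closes the loop and yields exactly the stated fixed-point equation for $V$.

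The main obstacle is making the variational argument rigorous despite the fibred (disintegrated) nature of the optimization: one perturbs a whole family $(\nu^{*,w})_w$, not a single measure, and $P$ is typically non-atomic so ``$P(\{w\})$'' is meaningless — the perturbation must be taken as $\nu^{*,w}\to\nu^{*,w}+t\,\eta_w$ with $\eta_w$ a signed kernel of zero mass, measurable in $w$ and suitably bounded, and one must differentiate under the $\int P(dw)$ sign. This requires (i) justifying interchange of $\frac{d}{dt}$ and $\int P(dw)$ via a dominated-convergence bound uniform in $w$ on compacts — here the strongly-finite-mean hypothesis and boundedness of $\Phi$ keep the linear-in-$w$ factor integrable — and (ii) checking that the finiteness of $\int P(dw)S(\nu^{*,w}\mid\alpha)$ forces $\nu^{*,w}\ll\alpha$ for $P$-a.e.\ $w$, so that $\log\frac{d\nu^{*,w}}{d\alpha}$ is well defined and the Euler--Lagrange identity can be read off pointwise in $w$. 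Once these measure-theoretic points are settled, the remaining steps are the routine computations of the two variational derivatives and the algebraic rewriting in terms of $P^{sb}$.
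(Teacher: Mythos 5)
Your proposal is correct and follows essentially the same route as the paper's own proof: fibrewise zero-mass variations of the conditional measures $\nu^{w}$ under the marginal constraint, the Euler--Lagrange identity equating $2\int P(dw')\int\nu^{w'}(d\s')U(\s,\s',w,w')$ with $\log\frac{d\nu^w}{d\a}(\s)$ up to a $w$-dependent constant, the size-biased rewriting giving the Gibbsian form $e^{wV(\s)}$, and back-substitution yielding the fixed-point equation for $V$. The measure-theoretic caveats you raise (non-atomic $P$, differentiation under $\int P(dw)$, a.e.\ absolute continuity) are handled no more explicitly in the paper, so nothing essential is missing.
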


\begin{rk}
The map $V$ can be interpreted as an effective mean-field potential which appears here as an order parameter. This potential has to satisfy the fixed point equation
\begin{equation}\begin{split}\label{eq:FixPoEq}
V=T_{\Phi,P,\alpha}(V),
\end{split}\end{equation}
with the non-linear map $T_{\Phi,P,\alpha}:C(E)\rightarrow C(E)$ given by 
\begin{equation}\begin{split}
T_{\Phi,P,\alpha}(V)(\s):=\int P^{sb}(dw )\int\nu^{w, V}(d\tilde \s)(e^{\Phi(\s,\tilde \s)}).
\end{split}\end{equation}
In words, this non-linear operator $T$  associates to the potential $V$ on $E$ the size-biased expectation 
of the single site expectation with exponent $w V$ of $e^{\Phi(\cdot, \cdot)}$
over one of the variables. 
\end{rk}

\begin{proof}
The equation for stationary points is obtained by describing the constrained 
infimum over $\bar \nu(d w)=P(dw)$ in terms of the conditional 
probabilities $w \mapsto \nu^w(d \s)$ and taking variations $w \mapsto \r^w(d \s)$ 
where $\r^w(d \s)$ are signed measures on the single spin space 
$E$ with mass zero. 
This means that we must have 
\begin{equation}\begin{split}
 2 \int P(dw)&\int P(dw')  \int\nu^{w'}(d\s') \int \r^w(d\s) 
U(\s,\s', w,w')\cr
& = \int P(dw) \int\r^{w}( d\s)\log \frac{d\nu^w }{d\a}(\s),
\end{split}\end{equation}
which in turn implies
\begin{equation}\begin{split}
2\int P(dw')   \int\nu^{w'}(d\s') 
U(\s,\s', w,w')
& = \log \frac{d\nu^w }{d\a}(\s) + C(w).
\end{split}\end{equation}
So we have
\begin{equation}\begin{split}
\frac{d\nu^w }{d\a}(\s)
&=\exp \Bigl( 2\int P(dw')   \int\nu^{w'}(d\s') 
U(\s,\s', w,w')- C(w)
\Bigr)\cr
&=\exp \Bigl(  w \int P^{sb}(dw')   \int\nu^{w'}(d\s') 
e^{\Phi(\s,\s')}- C(w)
\Bigr).\cr
\end{split}\end{equation}
Doing so we arrive at the mean-field equation of the form 
\begin{equation}\begin{split}
\frac{d \nu^{w}}{d \a}(\s)=\frac{d \nu^{w, V}}{d \a}(\s)
:=\frac{\exp\Bigl( w V(\s)         \Bigr) }{\int d\a(\tilde \s)\exp\Bigl(   w V(\tilde\s)         \Bigr) },
\end{split}\end{equation}
where the mapping $V:E\to \R$ must satisfy the equation 
\begin{equation}\begin{split}
V(\s)=\int P^{sb}(dw )\frac{\int d\a(\tilde \s)e^{\Phi(\s,\tilde \s)}\exp\Bigl(   w V(\tilde \s)         \Bigr) }{\int d\a(\tilde \s)\exp\Bigl(  w V(\tilde\s)         \Bigr) }.
\end{split}\end{equation}
In short 
\begin{equation}\begin{split}
V(\s)=\int P^{sb}(dw )\int\nu^{w, V}(d\tilde \s)(e^{\Phi(\s,\tilde \s)}).
\end{split}\end{equation}
\end{proof}

\subsection{Useful variational representation of the annealed pressure}

We may restrict the supremum over the (finitely many) solutions of the fixed point equation for $V$ \eqref{eq:FixPoEq} getting 
\begin{equation}\begin{split}
\psi(\Phi,P)=\sup_{V: T_{\Phi,P,\a}(V) = V}
\Biggl( \bar \nu^V \otimes \bar \nu^V (U)
- \int P (dw) S(\nu^{w,V}|\a )
\Biggr), 
\end{split}\end{equation}
where $\bar \nu^V$ is the joint measure with marginals $\nu^{w,V}$. This can be rewritten 
in terms of a nice formula making various free-energy like terms apparent as summands. 
In the following all the supremums appearing will be taken over solutions of \eqref{eq:FixPoEq}. 
We have
\begin{equation}\begin{split}\
\psi(\Phi,P)
=\sup_{V: T_{\Phi,P,\a}(V) = V}
\Biggl( \frac{\E(W)}{2}  \int P^{sb } (dw) & \int P^{sb } (dw')
\int\nu^{w,V}(d\s) \int \nu^{w',V}(d\s')  e^{\Phi(\s,\s')} \cr
&- \int P (dw) S(\nu^{w,V}|\a )
\Biggr) .
\end{split}\end{equation}

Using again the mean-field equation for $V$ on the first term we have 
\begin{equation}\begin{split}
\psi(\Phi,P)
=\sup_{V: T_{\Phi,P,\a}(V) = V}
\Biggl( \frac{\E(W)}{2} & \int P^{sb } (dw) 
\int\nu^{w,V}(d\s) V(\s) 
- \int P (dw) S(\nu^{w,V}|\a )
\Biggr) .
\end{split}\end{equation}
Using the exponential form for $\nu^{w,V}$ this is 

\begin{equation}\begin{split}
\psi(\Phi,P)
&=\sup_{V: T_{\Phi,P,\a}(V) = V}
\Biggl( \frac{\E(W)}{2}\int P^{sb } (dw) 
\int\nu^{w,V}(d\s) V(\s) \cr
&- E(W) \int P^{sb} (dw)  \nu^{w,V}(d\s)V(\s) + \int P (dw)\log \int\a(d\s)(e^{w V(\s)})
\Biggr) .
\end{split}\end{equation}

With the small benefit of the  cancellations between the first two terms we arrive at the following 
theorem. 

\begin{thm}\label{thm-pressure}
\begin{equation}\begin{split}\label{eq-general-pressure}
&\psi(\Phi,P)\cr
&=\sup_{V: T_{\Phi,P,\a}(V) = V}
\Biggl(-  \frac{\E(W)}{2} \int P^{sb} (dw) \int \nu^{w,V}(d\s)V(\s) 
+ \int P (dw)\log \int\a(d\s)(e^{w V(\s)})
\Biggr)  \cr
&=\sup_{V: T_{\Phi,P,\a}(V) = V}
\Bigl(-  \frac{\E(W)}{2} E^{sb}\bigl( \nu^{W,V}(V)\bigr)
+ E \bigl(\log \a(e^{W V})\bigr)
\Bigr).  \cr
\end{split}\end{equation}
\end{thm}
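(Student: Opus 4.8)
The plan is to obtain the asserted identity as a short chain of equalities, starting from the variational representation of Corollary~\ref{cor:AnnPr}, $\psi(\Phi,P)=\max_{\nu:\,\nu(dw)=P(dw)}\bigl(\nu\otimes\nu(U)-\int P(dw)\,S(\nu^w\mid\a)\bigr)$, and then feeding in the structural description of the maximizer from Theorem~\ref{thm:MF}. Since every stationary point of the constrained functional, and in particular the maximizer, is of the Gibbsian form $\nu^{w}=\nu^{w,V}$ with $\frac{d\nu^{w,V}}{d\a}(\s)=e^{wV(\s)}/\a(e^{wV})$ for some solution $V$ of the fixed point equation $T_{\Phi,P,\a}(V)=V$, the supremum over admissible $\nu$ may be replaced by a supremum over the (finitely many) solutions $V$ of that equation; for each such $V$ I then simplify the interaction term and the entropy term separately.

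For the interaction term I would exploit the product structure $U(\s,\s',w,w')=\frac{ww'}{2\E[W]}e^{\Phi(\s,\s')}$: because $\bar\nu^V$ has marginal $P$ on the weight coordinate, each weight integration contributes a factor of the size-biased law $P^{sb}(dw)=\frac{w}{\E[W]}P(dw)$, so that $\bar\nu^V\otimes\bar\nu^V(U)=\frac{\E(W)}{2}\int P^{sb}(dw)\int P^{sb}(dw')\int\nu^{w,V}(d\s)\int\nu^{w',V}(d\s')\,e^{\Phi(\s,\s')}$. Recognizing the inner $P^{sb}(dw')$-integral as $T_{\Phi,P,\a}(V)(\s)$ and invoking $V=T_{\Phi,P,\a}(V)$ collapses this to $\frac{\E(W)}{2}\int P^{sb}(dw)\int\nu^{w,V}(d\s)\,V(\s)$.

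For the entropy term I would insert the explicit Radon--Nikodym derivative to get $S(\nu^{w,V}\mid\a)=w\int\nu^{w,V}(d\s)V(\s)-\log\a(e^{wV})$, then integrate against $P(dw)$, again turning the factor $w$ into a switch to $P^{sb}$, which gives $\int P(dw)\,S(\nu^{w,V}\mid\a)=\E(W)\int P^{sb}(dw)\int\nu^{w,V}(d\s)V(\s)-\int P(dw)\log\a(e^{wV})$. Subtracting this from the simplified interaction term, the two copies of $\int P^{sb}(dw)\int\nu^{w,V}(d\s)V(\s)$ combine to leave $-\frac{\E(W)}{2}\int P^{sb}(dw)\int\nu^{w,V}(d\s)V(\s)+\int P(dw)\log\a(e^{wV})$, which is the first displayed expression in the theorem; rewriting the $P^{sb}$- and $P$-integrals as $E^{sb}$ and $E$ yields the second line.

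The only step that is not routine algebra is the reduction of the supremum over $\nu$ to a supremum over fixed points $V$: this uses that the maximum in Corollary~\ref{cor:AnnPr} is actually attained (established in the proof of Theorem~\ref{thm:AnnPr}) and that the attained maximizer must be a stationary point of the constrained functional, hence of the form $\bar\nu^V$ with $V$ a fixed point of $T_{\Phi,P,\a}$ by Theorem~\ref{thm:MF}; in particular the fixed-point set is nonempty, so the supremum on the right-hand side is over a nonempty set, and since conversely each such $\bar\nu^V$ is admissible the two suprema agree. I expect this bookkeeping to be the main point to state carefully, while everything else uses only the product form of $U$, the Gibbsian form of $\nu^{w,V}$, the mean-field equation for $V$, and the definition of the size-biased measure $P^{sb}$.
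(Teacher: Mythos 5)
Your proposal is correct and follows essentially the same route as the paper: restrict the supremum to the fixed points $V$ via Theorem~\ref{thm:MF} (using attainment of the maximum from Theorem~\ref{thm:AnnPr}), rewrite $\bar\nu^V\otimes\bar\nu^V(U)$ through the size-biased measure and collapse it with $V=T_{\Phi,P,\a}(V)$, expand the entropy using the explicit Radon--Nikodym derivative, and cancel half of the $\E(W)\,E^{sb}\bigl(\nu^{W,V}(V)\bigr)$ term. Your explicit attention to why the supremum over admissible $\nu$ may be replaced by one over fixed points is a slightly more careful statement of a step the paper treats as immediate, but the argument is the same.
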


\begin{rk}
Interestingly, there is no explicit $\Phi$-dependence 
in this formula. However, remember that the $\sup$ has to be taken 
only w.r.t.\ to $V$'s which are solutions to the mean field equation. 
\end{rk}

\subsection{A criterion for uniqueness}

Let us start the discussion by providing a criterion for uniqueness 
based on smallness of the interaction potential $\Phi$ (which could be scaled 
with a prefactor $\b$, letting $\b$ tend to zero), and smallness of the weights 
in some sense. 

To do so, we put the sup-norm on $E$ und use the Banach fixed-point theorem 
on the space of single-site potentials on $E$. 
We write 
\begin{equation}\begin{split}
T_{\Phi,P,\alpha}(V)(\s)- T_{\Phi,P,\alpha}(\bar V)(\s)
&=\int P^{sb}(dw )\int_0^1 \frac{d}{dt}\int\nu^{w, V+ t(\bar V- V)}(d\tilde \s)(e^{\Phi(\s,\tilde \s)}) dt \cr
&=\int P^{sb}(dw )w\int_0^1 \text{cov}_{\nu^{w, V+ t(\bar V- V)}}
(e^{\Phi(\s,\cdot)}; \bar V-V)dt. \cr
\end{split}\end{equation}
Define the variation of a function as $\d(f):=\sup_{\o,\o'}(f(\o)-f(\o'))$. Then, using the above,
\begin{equation}\begin{split}
&\delta(T_{\Phi,P,\alpha}(V)(\s)- T_{\Phi,P,\alpha}(\bar V))\cr
&=\sup_{\s_1,\s_2} \int P^{sb}(dw )w\int_0^1 \text{cov}_{\nu^{w, V+ t(\bar V- V)}}
(e^{\Phi(\s_1,\cdot)}-e^{\Phi(\s_2,\cdot)}; \bar V-V)dt \cr
&=\sup_{\s_1,\s_2} \int P^{sb}(dw )w\int_0^1 \text{cov}_{\nu^{w, V+ t(\bar V- V)}}
(e^{\Phi(\s_1,\cdot)}-e^{\Phi(\s_2,\cdot)}; \bar V-V+a)dt, \cr
\end{split}\end{equation}
for any constant $a$. Estimating the covariance uniform in the measure gives 
us
\begin{equation}
\delta(T_{\Phi,P,\alpha}(V)(\s)- T_{\Phi,P,\alpha}(\bar V))
\leq \Vert V-\bar V +a\Vert \delta^{(2)}(e^\Phi) \int P^{sb}(dw )w,
\end{equation}
where the second order variation of a function of two parameters is defined as
\begin{equation}
\delta^{(2)}(f) = \sup_{\s_1,\s_2,\s_3,\s_4} \big( f(\s_1,\s_3)-f(\s_2,\s_3) - (f(\s_1,\s_4)-f(\s_2,\s_4))\bigr).
\end{equation}
Since this inequality holds for all constants $a$, we can take the infimum over $a$, and use that $\inf_a \Vert f+a\Vert = \frac12\delta(f)$, to get the following theorem:

\begin{thm}\label{thm:unique} 
There is a unique solution to the fixed point equation \eqref{eq:FixPoEq}
for $V$ and hence absence of phase transition in the annealed spin model on 
generalized random graphs  
whenever 
\begin{equation}\begin{split}\label{eq:unique}
&E^{sb}(W) \,\,
\frac12 \d^{(2)} ( e^{\Phi})< 1. \cr
\end{split}\end{equation}
\end{thm}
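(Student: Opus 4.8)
The plan is to apply the Banach fixed-point theorem to the non-linear operator $T_{\Phi,P,\alpha}$ on the metric space $C(E)$ equipped with a suitable distance. Observe first that $T_{\Phi,P,\alpha}(V+a) = T_{\Phi,P,\alpha}(V)$ for any constant $a$: adding a constant to $V$ multiplies both numerator and denominator of $\nu^{w,V}$ by $e^{wa}$, which cancels. Hence $T_{\Phi,P,\alpha}$ descends to an operator $\bar T$ on the quotient space $C(E)/\mathbb{R}\mathbf{1}$, which I equip with the norm induced by the variation seminorm $\delta(f) = \sup_{\omega,\omega'}(f(\omega)-f(\omega'))$; note $\delta(f) = 2\inf_a \Vert f + a\Vert$, so this is a genuine norm on the quotient and makes it a Banach space. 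Fixed points of $T_{\Phi,P,\alpha}$ in $C(E)$ correspond exactly to fixed points of $\bar T$ on the quotient, since if $\bar V$ is a fixed point mod constants then $T_{\Phi,P,\alpha}(\bar V)$ is a genuine fixed point in $C(E)$ (it already equals $\bar V$ up to a constant, and applying $T$ once more reproduces it).

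Next I would assemble the contraction estimate, which is essentially already done in the computation preceding the theorem statement. Differentiating $T_{\Phi,P,\alpha}$ along the segment from $V$ to $\bar V$ expresses the difference as a $P^{sb}$-average of $w$ times a covariance under $\nu^{w,V+t(\bar V - V)}$ of $e^{\Phi(\sigma,\cdot)}$ against $\bar V - V$. Using the elementary bound that for any probability measure $\mu$ one has $|\mathrm{cov}_\mu(f;g)| \le \frac{1}{2}\delta(f)\,\Vert g + a\Vert$ for every constant $a$ — more precisely, after symmetrizing in $\sigma_1,\sigma_2$ one gets the factor $\delta^{(2)}(e^\Phi)$ — and then optimizing over the additive constant $a$ in $\bar V - V$, the displayed chain of inequalities yields
\begin{equation*}
\delta\bigl(T_{\Phi,P,\alpha}(V) - T_{\Phi,P,\alpha}(\bar V)\bigr) \le \Bigl(\int P^{sb}(dw)\,w\Bigr)\,\tfrac{1}{2}\delta^{(2)}(e^\Phi)\,\delta(\bar V - V) = E^{sb}(W)\,\tfrac{1}{2}\delta^{(2)}(e^\Phi)\,\delta(\bar V - V).
\end{equation*}
Under hypothesis \eqref{eq:unique} the constant $E^{sb}(W)\,\frac{1}{2}\delta^{(2)}(e^\Phi)$ is strictly less than $1$, so $\bar T$ is a strict contraction on the quotient Banach space.

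Banach's fixed-point theorem then gives a unique fixed point of $\bar T$, hence (lifting as above) a unique solution $V$ — up to an irrelevant additive constant, which does not affect $\nu^{w,V}$ — of the fixed point equation \eqref{eq:FixPoEq}. Since by Theorem~\ref{thm:MF} every stationary point of the variational functional \eqref{AnnPrMax}, and in particular the maximizer giving the pressure, corresponds to such a $V$, uniqueness of $V$ forces uniqueness of the stationary measure $\bar\nu^V$; this is the sense in which there is no phase transition. The main obstacle, such as it is, is bookkeeping rather than conceptual: one must check that $T_{\Phi,P,\alpha}$ genuinely maps $C(E)$ into itself (continuity of $V \mapsto T_{\Phi,P,\alpha}(V)(\sigma)$ and of $\sigma \mapsto T_{\Phi,P,\alpha}(V)(\sigma)$, using boundedness of $\Phi$ and dominated convergence) and that the quotient by constants is complete; the covariance bound and the optimization over $a$ are the routine computations already laid out before the statement, so no further work is needed there.
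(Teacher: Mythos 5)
Your proposal is correct and follows essentially the same route as the paper: the covariance/telescoping estimate $\delta(T_{\Phi,P,\alpha}(V)-T_{\Phi,P,\alpha}(\bar V))\leq E^{sb}(W)\,\tfrac12\,\delta^{(2)}(e^{\Phi})\,\delta(V-\bar V)$ is exactly the computation preceding the theorem, followed by the Banach fixed-point theorem. Your explicit passage to the quotient $C(E)/\R\mathbf{1}$ with the variation (semi)norm, and the lifting of the quotient fixed point back to a genuine fixed point using $T_{\Phi,P,\alpha}(V+a)=T_{\Phi,P,\alpha}(V)$, is just a more carefully spelled-out packaging of the same argument, not a different proof.
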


\subsection{Ising model}
The Ising model is defined by taking $E=\{-1,+1\}$, $\Phi(\s,\s')=\b \s \s' $ and  $\a$ is symmetric Bernoulli, that is, $\alpha=\frac12 \delta_{-1}+\frac12 \delta_{+1}$. Any $V(\s)$ has to be of the form $V(\s)=v \s+\l$. Plugging this into~\eqref{eq:FixPoEq} gives
\begin{equation}\begin{split}\
\int P^{sb}(dw )\frac{\cosh( \b \s  + w v  ) 
}{
\cosh(   w v  ) }
 = v\s +\l,
\end{split}\end{equation}
which is equivalent to
\begin{equation}\begin{split}
\int P^{sb}(dw )\frac{\cosh( \b \s)\cosh( w v  )+  \sinh( \b \s)\sinh( w v  )
}{
\cosh(   w v  ) }
 = v\s +\l.
\end{split}\end{equation}
Using that $\s$ can only take values  $-1$ and $+1$, we can write this as
\begin{equation}\begin{split}
\cosh( \b )+ \s  \sinh( \b )\int P^{sb}(dw ) \tanh( w v  )
 = v\s +\l.
\end{split}\end{equation}
Since $\l$ is arbitrary, we have the fixed point equation for the effective potential 
\begin{equation}\begin{split}\label{eq-fixedpointIsing}
T(v) := \sinh( \b )\int P^{sb}(dw ) \tanh( w v  )
 = v .
\end{split}\end{equation}
There is symmetry breaking if we have a solution $v>0$. Note that 
\begin{equation}
T'(v) = \sinh( \b )\int P^{sb}(dw ) w (1-\tanh^2( w v  )), 
\end{equation}
and 
\begin{equation}
T''(v) = -2 \sinh( \b )\int P^{sb}(dw ) w^2 \tanh(w v) (1-\tanh^2( w v  ))<0,
\end{equation}
for $v>0$. Hence, $T(v)$ is a concave function for $v>0$, and hence there is a solution $v>0$ iff $T'(0)>1$, i.e.,
\begin{equation}
\sinh( \b )\int P^{sb}(dw )w = \sinh(\b) \frac{\E[W^2]}{\E[W]} > 1.
\end{equation}
Hence, we have a phase transition at
\begin{equation}
\beta_c = \asinh\frac{\E[W]}{\E[W^2]}.
\end{equation}
Indeed, the fixed point equation~\eqref{eq-fixedpointIsing} is the same as the one obtained in~\cite{GiaGibHofPri16}, see also~\cite{DomGiaGibHofPri16} and Proposition~\ref{pro-ising} below for an analysis of the critical behavior of this model.

\section{Critical behavior of rank-$2$ continuous transition kernels \label{sec-criticalbehavior}} 
\subsection{State space $E=[-1,1]$  \label{sec-criticalbehavior5.1}} 
Let $\a$ be any a priori probability measure on $E$ and $g$ a function on $E$.
Define the kernel $K(\s,\s')= e^{\Phi(\s,\s')}=c+ \theta g(\s)g(\s')$ where we assume that the constants $c$ and $\theta$ can be chosen such that $K$ is strictly positive. 
This kernel has its range spanned by the constants $c$ and the function $g$ on $E$. Therefore we call this a {\em rank-$2$ continuous transition kernel}. 

Observe that 
\begin{equation}\begin{split}
T_{\Phi,P,\alpha}(V)(\s)=\int P^{sb}(dw )\int\nu^{w, V}(d\tilde \s)(K(\s,\tilde \s))= c + \theta g(\s) \E^{sb} \nu^{W, V}(g).
\end{split}\end{equation}
Hence any $V$ satisfying 
\begin{equation}\begin{split}
V=T_{\Phi,P,\alpha}(V),
\end{split}\end{equation}
must be of the form 
$V(\s)=c + m g(\s)$. Hence the fixed point equation reduces to 
the one-dimensional equation 

\begin{equation}\begin{split}\label{eq-fixedpointm}
m/\theta= \E^{sb}\nu^{W, m g}(g)=:\varphi(m).
\end{split}\end{equation}

Denote by $\alpha_0$ an even measure on $[-1,1]$. Note that for $\alpha=\alpha_0$, we always have the solution $\varphi(0)=0/\theta$. Furthermore, we have the following phase transition if $\alpha=\alpha_0$:
\begin{lem}\label{lem-concavity}
Let $\alpha=\alpha_0$ for some even measure $\alpha_0$ and $g$ be an odd function. Suppose that, for all $t>0$,
\begin{equation}\label{eq-concavity-assumption}
\frac{\partial^3}{\partial t^3} \log \alpha_0(e^{t g}) <0.
\end{equation}
Then, with
\begin{equation}
1/\theta_c = \E^{sb}(W)\alpha_0(g^2),
\end{equation}
there exists no positive solution to $m/\theta=\varphi(m)$ if $\theta\leq\theta_c$ and a unique positive solution if $\theta>\theta_c$.
\end{lem}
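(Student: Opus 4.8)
The plan is to analyze the one-dimensional fixed point equation $m/\theta = \varphi(m)$ where $\varphi(m) = \E^{sb}\nu^{W,mg}(g)$ by exploiting the concavity assumption~\eqref{eq-concavity-assumption} to pin down the shape of $\varphi$ on $(0,\infty)$. First I would rewrite $\varphi$ in terms of cumulants: since $\nu^{w,mg}$ has Radon--Nikodym derivative $e^{m w g(\tilde\s)}/\alpha_0(e^{m w g})$ with respect to $\alpha_0$, we have $\nu^{w,mg}(g) = \frac{\partial}{\partial s}\big|_{s=mw}\log\alpha_0(e^{sg})$, i.e. $\nu^{w,mg}(g) = \kappa_1^{(w,m)}$, the first derivative of the log-moment-generating function of $g$ under $\alpha_0$ evaluated at tilt $mw$. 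Hence $\varphi(m) = \int P^{sb}(dw)\, \partial_s\log\alpha_0(e^{sg})|_{s=mw}$.

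The key structural facts I would establish are: (i) $\varphi(0) = 0$, because $g$ is odd and $\alpha_0$ is even, so $\partial_s\log\alpha_0(e^{sg})|_{s=0} = \alpha_0(g) = 0$; (ii) $\varphi'(0) = \int P^{sb}(dw)\, w\,\partial_s^2\log\alpha_0(e^{sg})|_{s=0} = \E^{sb}(W)\,\alpha_0(g^2) = 1/\theta_c$, using that the second cumulant at zero tilt is the variance $\alpha_0(g^2) - \alpha_0(g)^2 = \alpha_0(g^2)$; and (iii) for $m>0$, $\varphi''(m) = \int P^{sb}(dw)\, w^2\,\partial_s^3\log\alpha_0(e^{sg})|_{s=mw} < 0$ by the assumption~\eqref{eq-concavity-assumption} applied at $t = mw > 0$. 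So $\varphi$ is strictly concave on $(0,\infty)$ with $\varphi(0)=0$ and slope $1/\theta_c$ at the origin.

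From here the argument is the standard concavity/tangent-line dichotomy. The positive solutions of $m/\theta = \varphi(m)$ are the positive intersections of the line $m\mapsto m/\theta$ through the origin with the graph of the strictly concave function $\varphi$, which also passes through the origin. If $1/\theta \geq \varphi'(0) = 1/\theta_c$, i.e. $\theta \leq \theta_c$, then by strict concavity $\varphi(m) < \varphi'(0)\,m \leq m/\theta$ for all $m>0$, so there is no positive solution. If $1/\theta < 1/\theta_c$, i.e. $\theta > \theta_c$, then near $0$ we have $\varphi(m) > m/\theta$ (the line is below the curve initially since $\varphi'(0) > 1/\theta$), while for large $m$ we need $\varphi(m) - m/\theta \to -\infty$; I would get this from the a priori bound $|\varphi(m)| \le \sup|g| < \infty$ (since $g$ is bounded on $[-1,1]$, being continuous there — or by whatever integrability of $g$ against $\alpha_0$ is in force), so that $\varphi(m) - m/\theta \to -\infty$. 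By the intermediate value theorem there is at least one positive root, and by strict concavity of $m\mapsto \varphi(m) - m/\theta$ (which still has negative second derivative) together with its value $0$ at $m=0$ and positive derivative at $m=0$, it can have at most one positive root. This gives exactly one positive solution.

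The main obstacle I anticipate is purely a regularity/justification matter rather than a conceptual one: differentiating under the $P^{sb}(dw)$-integral to get the formulas for $\varphi'$ and $\varphi''$, and in particular evaluating $\varphi'(0)$, requires interchanging $\int P^{sb}(dw)$ with $\partial_m$, which needs domination uniform in $w$ near $m=0$ and hence a moment condition on $W$ under $P^{sb}$ (a finite first or second moment of the size-biased weight — available from Condition~\ref{con:weight}). The boundedness of $g$ on the compact interval $[-1,1]$ makes the single-site cumulants and their first three $s$-derivatives bounded uniformly, so the domination reduces to integrability of $w$ (resp. $w^2$) against $P^{sb}$, which one should note holds under the standing hypotheses; with that remark in place the differentiation is legitimate and the rest is the elementary concavity argument above.
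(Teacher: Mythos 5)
Your proposal is correct and follows essentially the same route as the paper: compute $\varphi''(m)=\E^{sb}\bigl[W^2\,\partial_t^3\log\alpha_0(e^{tg})|_{t=Wm}\bigr]<0$ from the assumption, identify $\varphi'(0)=\E^{sb}(W)\alpha_0(g^2)=1/\theta_c$ using that $g$ is odd, and conclude by concavity that a positive fixed point exists iff $\varphi'(0)>1/\theta$. The only difference is that you spell out the tangent-line/intermediate-value details and the differentiation-under-the-integral justification that the paper leaves implicit.
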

\begin{proof}
Note that for $m>0$
\begin{equation}
\varphi''(m) = \E^{sb}\left[\frac{\partial^2}{\partial m^2} \nu^{W, m g}(g) \right] =  \E^{sb}\left[W^2 \frac{\partial^3}{\partial t^3} \log \alpha_0(e^{t g})\Big|_{t=W m}\right] <0.
\end{equation}
Hence, $\varphi$ is a concave function for $m\geq 0$ and hence can have at most one positive solution to $m/\theta=\varphi(m)$. Such a solution exists iff $\varphi'(0) > 1/\theta$. Since $g$ is odd,
\begin{equation}
\varphi'(0) = \E^{sb}\left[\frac{\partial}{\partial m} \nu^{W, m g}(g) \Big|_{m=0} \right] =  \E^{sb}\left[W \frac{\partial^2}{\partial t^2} \log \alpha_0(e^{t g})\Big|_{t=0}\right] =\E^{sb}(W)\alpha_0(g^2).
\end{equation} 
Hence, the condition $\varphi'(0) > 1/\theta$ indeed corresponds to $\theta>\theta_c$.
\end{proof}
By symmetry the same holds for the number of negative solutions. If there is a positive solution $m^+$, then also $m^-=-m^+$ is a solution. It turns out that if a positive solution $m^+$ exists, this also gives the maximizer of~\eqref{eq-general-pressure}:
\begin{thm}\label{thm-mplusismaximizerh0}
Let $\alpha=\alpha_0$ for some even measure $\alpha_0$ and let $g$ be an odd function on $E$. Suppose that, for all $t>0$,
\begin{equation}\label{eq-concavity-assumption}
\frac{\partial^3}{\partial t^3} \log \alpha_0(e^{t g}) <0.
\end{equation}
Then,
\begin{equation}
\psi(\Phi,P) = \left\{\begin{array}{ll} \frac{c}{2}\E(W), & {\rm if\ } \theta\leq\theta_c,\\
		\frac{c}{2}\E(W) -\frac{1}{2}\E(W) \frac{(m^+)^2}\theta + \E \log \alpha_0(e^{W g m^+}), &{\rm if\ } \theta>\theta_c, \end{array}\right.
\end{equation}
where $m^+$ is the unique positive solution to $m/\theta=\varphi(m)$.
\end{thm}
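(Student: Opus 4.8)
The plan is to evaluate the variational formula of Theorem~\ref{thm-pressure} on the fixed points of $T_{\Phi,P,\alpha}$, which by the reduction leading to \eqref{eq-fixedpointm} are precisely the functions $V(\sigma)=c+mg(\sigma)$ with $m$ solving $m/\theta=\varphi(m)$. First I would substitute $V=c+mg$ into the two terms of \eqref{eq-general-pressure}. The constant $c$ enters $wV$ only through the $\sigma$-independent factor $e^{wc}$, so it cancels in the normalized single-site measure, $\nu^{w,V}=\nu^{w,mg}$, and hence $\nu^{w,V}(V)=c+m\,\nu^{w,mg}(g)$; taking $E^{sb}$ and using the fixed point relation $\varphi(m)=m/\theta$ gives $E^{sb}\bigl(\nu^{W,V}(V)\bigr)=c+m^2/\theta$. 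Likewise $\log\alpha_0(e^{wV})=wc+\log\alpha_0(e^{wmg})$, so $E\bigl(\log\alpha_0(e^{WV})\bigr)=c\,\E(W)+E\bigl(\log\alpha_0(e^{Wmg})\bigr)$. Substituting, the functional evaluated at a fixed point $m$ reduces to
\[
\Psi(m):=\tfrac{c}{2}\E(W)-\tfrac{\E(W)}{2}\tfrac{m^2}{\theta}+E\bigl(\log\alpha_0(e^{Wmg})\bigr),
\]
and $\psi(\Phi,P)=\max\Psi(m)$, the maximum being over the solutions of $m/\theta=\varphi(m)$, of which there are only $m=0$ (when $\theta\le\theta_c$) or $m\in\{0,m^+,-m^+\}$ (when $\theta>\theta_c$) by Lemma~\ref{lem-concavity} and its reflected counterpart.

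Next I would identify the maximizing $m$. If $\theta\le\theta_c$ there is only $m=0$, so $\psi(\Phi,P)=\Psi(0)=\tfrac{c}{2}\E(W)$. If $\theta>\theta_c$, then by evenness of $\alpha_0$ and oddness of $g$ one has $\alpha_0(e^{-Wm^+g})=\alpha_0(e^{Wm^+g})$, hence $\Psi(-m^+)=\Psi(m^+)$, and it only remains to check $\Psi(m^+)>\Psi(0)$. Using $\frac{\partial}{\partial m}\log\alpha_0(e^{Wmg})=W\nu^{W,mg}(g)$ together with $E(W\,\cdot)=\E(W)\,E^{sb}(\cdot)$, I compute
\[
\Psi'(m)=\E(W)\bigl(\varphi(m)-m/\theta\bigr).
\]
Since $\varphi$ is concave on $[0,\infty)$ with $\varphi(0)=0$ and $\varphi'(0)>1/\theta$ (which is exactly the condition $\theta>\theta_c$), the line $m\mapsto m/\theta$ lies strictly below $\varphi$ on the interval $(0,m^+)$, so $\Psi'>0$ there and therefore $\Psi(m^+)>\Psi(0)$. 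Hence the supremum is attained at $m^+$ (and $-m^+$), which gives the stated value of $\psi(\Phi,P)$.

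I do not anticipate a serious obstacle: the argument is the one-dimensional specialization of the general variational principle of Theorem~\ref{thm-pressure} combined with the concavity of $\varphi$ already established in Lemma~\ref{lem-concavity}. The only points demanding a little care are the bookkeeping of the additive constant $c$ (which cancels in $\nu^{w,V}$ but contributes the $wc$ terms to the entropy-like term) and the passage between the expectation $E$ under $P$ and the size-biased expectation $E^{sb}$ under $P^{sb}$ when differentiating $\Psi$; once $\Psi'(m)=\E(W)(\varphi(m)-m/\theta)$ is in hand, the comparison $\Psi(m^+)>\Psi(0)$ is immediate from the concavity of $\varphi$.
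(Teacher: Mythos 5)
Your proposal is correct and follows essentially the same route as the paper: substitute $V=c+mg$ into Theorem~\ref{thm-pressure}, use the fixed point relation to reduce to the one-dimensional function $\Psi(m)=\frac{c}{2}\E(W)-\frac{\E(W)}{2}\frac{m^2}{\theta}+\E\log\alpha_0(e^{Wgm})$, and then compare the candidate solutions $0,\pm m^+$ using the concavity established in Lemma~\ref{lem-concavity}. The only (harmless) difference is the last step: you conclude $\Psi(m^+)>\Psi(0)$ directly from $\Psi'(m)=\E(W)\bigl(\varphi(m)-m/\theta\bigr)>0$ on $(0,m^+)$, whereas the paper classifies the critical points ($m=0$ a local minimum since $\psi''(0)=\E(W)(\varphi'(0)-1/\theta)>0$ for $\theta>\theta_c$, $m^+$ the only other positive critical point, and $\psi\to-\infty$ as $|m|\to\infty$); both arguments rest on the same concavity input.
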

\begin{proof}
Recall that $V=c+gm$, so that, for any solution to $m/\theta=\varphi(m)$,
\begin{equation}\begin{split}
-  \frac{\E(W)}{2}  E^{sb}\bigl( \nu^{W,V}(V)\bigr) &=-  \frac{\E(W)}{2}  E^{sb}\bigl( \frac{\alpha((c+gm)e^{W(c+gm)})}{\alpha(e^{W(c+gm)})}\bigr) \cr
&=-\frac{c}{2}\E(W) -  \frac{\E(W)}{2} m E^{sb}\bigl( \nu^{W,gm}(g)\bigr)\cr
&= -\frac{c}{2}\E(W) - \frac{\E(W)}{2} \frac{m^2}{\theta},
\end{split}\end{equation}
where we used the fixed point equation in the last equality. Furthermore,
\begin{equation}
E \bigl(\log \a(e^{W V})\bigr) = E \bigl(\log \a(e^{W(c+gm)})\bigr) = c\E(W) + E \bigl(\log \a(e^{Wgm})\bigr).
\end{equation}
Hence, it follows from Theorem~\ref{thm-pressure} that
\begin{equation}
\psi(\Phi,P) = \sup_m \left(\frac{c}{2}\E(W) -\frac{1}{2}\E(W) \frac{m^2}\theta + \E \log \alpha(e^{W g m})\right)=: \sup_m \psi(m),
\end{equation} 
where the the  supremum is over all solutions to $m/\theta=\varphi(m)$. If $\theta \leq \theta_c$ there is only one solution $m=0$ and we are done. 

For $\theta>\theta_c$, we need to compare the values of $\psi(0)$ and $\psi(m^+)$. (The negative solution gives the same value as $\psi(m^+)$ by symmetry). Note that $\psi'(0)=0$ and
\begin{equation}
\psi''(0) = \E(W) \left(\phi'(0) - \frac{1}{\theta} \right),
\end{equation}
which has been shown to be negative for $\theta>\theta_c$ in the previous lemma, so that $m=0$ is a local minimum. Furthermore,
$\psi'(m^+)=0$ and there are no other positive values for which this holds and
\begin{equation}
\lim_{|m|\to\infty} \psi(m) = -\infty.
\end{equation}
Hence, we have that $\psi$ must have a global maximum at $m^+$.
\end{proof}

We can break the symmetry of the system by adding a magnetic field by introducing an exponential tilting of the measure $\alpha_0$, that is, for some $h\in\mathbb{R}$, we let
\begin{equation}
\alpha(d\s) = \frac{e^{h g(\s)}\alpha_0(d\s)}{\int e^{h g(\s')}\alpha_0(d\s')}.
\end{equation}
If we furthermore suppose that $\sgn(g(\sigma))=\sgn(\sigma)$, then $h>0$ introduces a positive bias to the spin values. In this case, there is only one nonnegative solution to $m/\theta=\phi(m)$  and this is also the maximizer of $\psi(m)$ as the next theorem shows:
\begin{thm}
Suppose that $g$ is an odd function with $\sgn(g(\sigma))=\sgn(\sigma)$ and that, for all $t>0$,
\begin{equation}\label{eq-concavity-assumption}
\frac{\partial^3}{\partial t^3} \log \alpha_0(e^{t g}) <0.
\end{equation}
If $h>0$, then
\begin{equation}
\psi(\Phi,P) = \frac{c}{2}\E(W) -\frac{1}{2}\E(W) \frac{(m^+)^2}\theta + \E \log \alpha(e^{W g m^+}),
\end{equation}
where $m^+$ is the unique nonnegative solution to $m/\theta=\varphi(m)$.
\end{thm}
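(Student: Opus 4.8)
The plan is to mimic the structure of the proof of Theorem~\ref{thm-mplusismaximizerh0}, but now in the tilted (magnetic field) setting where the symmetry $\s\mapsto-\s$ is broken. First I would record, exactly as before, that any fixed point $V$ of $T_{\Phi,P,\alpha}$ must be of the form $V=c+gm$ with $m/\theta=\varphi(m)$, where now $\varphi(m)=\E^{sb}\nu^{W,mg}(g)$ is computed with respect to the tilted measure $\alpha$ rather than $\alpha_0$. Plugging $V=c+gm$ into the pressure formula~\eqref{eq-general-pressure} and using the fixed point relation to simplify the quadratic term gives, just as in the previous proof,
\begin{equation}\begin{split}
\psi(\Phi,P)=\sup_m\left(\frac{c}{2}\E(W)-\frac{1}{2}\E(W)\frac{m^2}{\theta}+\E\log\alpha(e^{Wgm})\right)=:\sup_m\psi(m),
\end{split}\end{equation}
where the supremum runs over all solutions of $m/\theta=\varphi(m)$. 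So the real content is: (a) there is a unique \emph{nonnegative} solution $m^+$, and it is positive since $h>0$; and (b) among \emph{all} solutions, $m^+$ realizes the supremum.

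For (a) I would reuse the concavity input~\eqref{eq-concavity-assumption}. The key point is that even though $\alpha=\alpha_0\,e^{hg}/Z$ is no longer even, we still have $\frac{\partial^3}{\partial t^3}\log\alpha(e^{tg})=\frac{\partial^3}{\partial t^3}\log\alpha_0(e^{(t+h)g})<0$ for all $t>-h$, in particular for all $t\geq0$; this is because tilting $\alpha_0$ by $e^{hg}$ merely shifts the argument of the cumulant generating function $t\mapsto\log\alpha_0(e^{tg})$ by $h$. Hence, exactly as in Lemma~\ref{lem-concavity}, $\varphi$ is concave on $[0,\infty)$; but now $\varphi(0)=\alpha(g)>0$ because the tilt with $\sgn(g)=\sgn(\sigma)$ and $h>0$ pushes the $\alpha$-mean of $g$ strictly positive. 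A concave function on $[0,\infty)$ with $\varphi(0)>0$ and $\varphi(m)$ bounded meets the line $m/\theta$ at exactly one point $m^+>0$ (the graph of $\varphi$ starts strictly above the origin-line and, by boundedness of $g$, eventually lies below it; concavity forbids a second crossing). One should also check there are no negative solutions, which again follows since for $m<0$ one has $\varphi(m)<\varphi(0)$ by... actually here care is needed: $\varphi$ need not be monotone, so instead I would argue directly that $\psi'(m)=\E(W)(\varphi(m)-m/\theta)/\theta$ wait — rather, I will handle the sign of solutions through the variational comparison in step (b), which automatically discards any spurious stationary points.

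For (b), the clean route is to show $\psi$ has a \emph{unique} critical point on all of $\R$ and that it is the global max. Compute $\psi'(m)=\E(W)\big(\varphi(m)-m/\theta\big)$ (using $\frac{\partial}{\partial m}\E\log\alpha(e^{Wgm})=\E[W\nu^{W,mg}(g)]=\E^{sb}[\cdots]$... careful with the $sb$ versus plain expectation — this is the same bookkeeping as in Theorem~\ref{thm-mplusismaximizerh0}, where the $\frac{\E(W)}{2}$ prefactors combine correctly). Thus critical points of $\psi$ are exactly solutions of $m/\theta=\varphi(m)$. Since $\lim_{|m|\to\infty}\psi(m)=-\infty$ (the $-\frac{\E(W)}{2\theta}m^2$ term dominates as $|g|$ is bounded), $\psi$ attains a global maximum, which is a critical point, hence a solution; and once we know $m^+$ is the only nonnegative solution, we must rule out negative solutions being the maximizer. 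The cleanest finish: on $(-\infty,0]$, $\psi'(m)=\E(W)(\varphi(m)-m/\theta)$; since $\varphi(m)\le\varphi(0)\cdot(\text{something})$... I expect the main obstacle is precisely verifying that there is no negative solution $m^-$ with $\psi(m^-)\ge\psi(m^+)$. I would resolve it by noting that concavity of $\varphi$ on $[0,\infty)$ plus $\varphi(0)>0$ gives $\varphi(m)>0\ge m/\theta$ for all $m\le0$ only if $\varphi(0)>0$ and $\varphi$ decreasing on the negatives — which may fail — so the honest argument is: extend the concavity bound. Actually~\eqref{eq-concavity-assumption} only for $t>0$ is assumed, so on $m<0$ we lack concavity of $\varphi$; therefore I would instead show $\psi(m)<\psi(0)$ for... no. The robust fix: since $\psi''(m)=\E(W)(\varphi'(m)-1/\theta)$ and $\varphi'$ need not be controlled for $m<0$, I will instead argue that any solution $m_0$ of $m/\theta=\varphi(m)$ with $m_0\le0$ satisfies $\psi(m_0)\le\psi(m^+)$ by a direct interpolation/convexity estimate on the original variational functional before restricting to $V=c+gm$ — comparing the candidate measure $\nu^{W,gm_0}$ against $\nu^{W,gm^+}$ using that the field $h>0$ strictly favors the positive configuration. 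That tilting-monotonicity comparison is the crux, and I would expect to spend most of the work making it rigorous.
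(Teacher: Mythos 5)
Your reduction to $\psi(\Phi,P)=\sup_m\psi(m)$ over fixed points, and your argument for a unique nonnegative (indeed strictly positive) solution $m^+$ — concavity of $\varphi$ on $[0,\infty)$ via the shift $t\mapsto t+h$ in the cumulant generating function, plus $\varphi(0)=\alpha(g)>0$ from $\sgn(g(\sigma))=\sgn(\sigma)$ and $h>0$ — are exactly the paper's first steps and are fine. The problem is that the heart of the theorem, ruling out a negative fixed point as the maximizer, is not proved in your proposal: you correctly observe that the hypothesis \eqref{eq-concavity-assumption} gives no control of $\varphi$ on $m<0$ (since $Wm+h$ can be negative for large $W$), you correctly discard the idea that there are no negative solutions (the paper notes there are either zero or two, with only the smaller one $m^-$ a competitor, by the same local-min/global-max reasoning as in Theorem~\ref{thm-mplusismaximizerh0}), but the "direct interpolation/tilting-monotonicity comparison" you end with is only a declared intention, not an argument. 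As it stands the proposal leaves open precisely the possibility $\psi(m^-)\geq\psi(m^+)$.

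The paper closes this gap with a short $h$-differentiation trick that you did not find: for any fixed point $m=m(h)$ one has
\begin{equation}
\frac{\partial}{\partial h}\psi(m) = \psi'(m)\frac{\partial m}{\partial h} + \E\left[\frac{\alpha_0(g e^{(Wm+h)g})}{\alpha_0(e^{(Wm+h)g})}\right] - \E\left[\frac{\alpha_0(g e^{hg})}{\alpha_0(e^{hg})}\right],
\end{equation}
and the first term vanishes because $\psi'(m)=0$ at a fixed point, so no control of $\partial m/\partial h$ is needed. Since $m\mapsto \E\bigl[\alpha_0(g e^{(Wm+h)g})/\alpha_0(e^{(Wm+h)g})\bigr]$ is strictly increasing (its $m$-derivative is a $W$-weighted variance of $g$), and $m^+>m^-$, the difference $\psi(m^+)-\psi(m^-)$ has strictly positive $h$-derivative; together with its vanishing at $h=0$ (by the symmetry of $\alpha_0$ and oddness of $g$) this yields $\psi(m^+)>\psi(m^-)$ for all $h>0$. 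If you want to salvage your sketch, this envelope-type computation in $h$ is the concrete form of the "$h>0$ favors the positive branch" monotonicity you were reaching for; without it (or an equivalent rigorous comparison) the proof is incomplete.
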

\begin{proof}
We first prove that there indeed is a unique nonnegative solution to $m/\theta=\varphi(m)$. As in Lemma~\ref{lem-concavity}, we have that for all $m>0$
\begin{equation}
\varphi''(m) = \E^{sb}\left[\frac{\partial^2}{\partial m^2} \frac{\alpha_0(g e^{(Wm+h)g})}{\alpha_0(e^{(Wm+h)g})} \right] =  \E^{sb}\left[W^2 \frac{\partial^3}{\partial t^3} \log \alpha_0(e^{t g})\Big|_{t=W m+h}\right] <0,
\end{equation}
so that again $\varphi(m)$ is concave for $m\geq0$. For $m=0$ we have that
\begin{equation}
\varphi(0)= \E^{sb}\left[\nu^0(g)\right] =  \frac{\alpha_0(g e^{hg})}{\alpha_0(e^{hg})}.
\end{equation}
Using that $g$ is an odd function and $\sgn(g(\sigma))=\sgn(\sigma)$, we can compute
\begin{equation}\begin{split}
\alpha_0(g e^{hg}) &= \int_{-1}^{0} \alpha_0(d\s) g(\s) e^{hg(\s)}+\int_{0}^{1} \alpha_0(d\s) g(\s) e^{hg(\s)} \cr
&=\int_{0}^{1} \alpha_0(d\s) g(\s) \left(e^{hg(\s)}-e^{-hg(\s)}\right) >0,
\end{split}\end{equation}
since $h>0$. Hence, also $\varphi(0)>0$ and there is a unique positive fixed point.

Using similar arguments, one can show that there are either zero negative fixed points, in which case we are done, or there are two negative solutions, we call the smallest of these two $m^-$. As in Theorem~\ref{thm-mplusismaximizerh0}, one can show that $m^-$ is the only other candidate for the maximizer. We compare the two values by computing
\begin{equation}
\frac{\partial}{\partial h} \left(\psi(m^+)-\psi(m^-)\right),
\end{equation}
and show that this is positive for all $h$. For $m$ any fixed point, we have
\begin{equation}
\frac{\partial}{\partial h}\psi(m) = \psi'(m)\frac{\partial m}{\partial h} + \E\left[\frac{\alpha_0(g e^{(Wm+h)g})}{\alpha_0(e^{(Wm+h)g})}\right] - \E\left[\frac{\alpha_0(g e^{hg})}{\alpha_0(e^{hg})}\right].
\end{equation}
Note that $\psi'(m)=0$, since $m$ is a fixed point. Also,
\begin{equation}
\frac{\partial}{\partial m} \E\left[\frac{\alpha_0(g e^{(Wm+h)g})}{\alpha_0(e^{(Wm+h)g})}\right] =  \E\left[W\frac{\alpha_0(g^2 e^{(Wm+h)g})}{\alpha_0(e^{(Wm+h)g})}-W \left(\frac{\alpha_0(g e^{(Wm+h)g})}{\alpha_0(e^{(Wm+h)g})}\right)^2\right] >0.
\end{equation}
Hence,
\begin{equation}
\frac{\partial}{\partial h} \left(\psi(m^+)-\psi(m^-)\right) = \E\left[\frac{\alpha_0(g e^{(Wm^++h)g})}{\alpha_0(e^{(Wm^++h)g})}\right]-\E\left[\frac{\alpha_0(g e^{(Wm^-+h)g})}{\alpha_0(e^{(Wm^-+h)g})}\right] >0.
\end{equation}
\end{proof}

We now analyze the critical behavior of $m^+$ around the critical point ${(\theta,h)=(\theta_c,0)}$. For this we write that $f(x)\asymp g(x)$ if $f(x)/g(x)$ is bounded away from $0$ and $\infty$ for the specified limit. We define the critical exponents $\boldsymbol{\beta}$ and $\boldsymbol{\delta}$ as
\begin{align}
m^+(\theta, 0) &\asymp (\theta-\theta_c)^{\boldsymbol{\beta}},  && {\rm for\ } \theta \searrow \theta_c;\\
m^+(\theta_c, h) &\asymp h^{1/\boldsymbol{\delta}},  && {\rm for\ } h \searrow 0;
\end{align}
respectively. Before we compute these critical exponents, we first show that the phase transition is continuous:
\begin{lem}\label{lem-contphasetrans}
Suppose that $g$ is an odd function with $\sgn(g(\sigma))=\sgn(\sigma)$ and that, for all $t>0$,
\begin{equation}
\frac{\partial^3}{\partial t^3} \log \alpha_0(e^{t g}) <0
\end{equation}
where we assume that the measure $\a_0$ is even.
Then, it holds that
\begin{equation}
\lim_{\theta \searrow \theta_c} m^+(\theta,0) = 0, \quad {\rm and} \quad \lim_{h \searrow 0} m^+(\theta_c,h)=0.
\end{equation}
\end{lem}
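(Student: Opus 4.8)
The plan is to reduce both statements to the strict concavity of the one--dimensional fixed point map already isolated in Lemma~\ref{lem-concavity}, combined with a monotonicity--in--the--parameter argument. Write $\varphi_h(m):=\E^{sb}\nu^{W,mg}(g)=\E^{sb}[G(Wm+h)]$, where $G(t):=\frac{d}{dt}\log\alpha_0(e^{tg})=\alpha_0(ge^{tg})/\alpha_0(e^{tg})$, so that $m^+(\theta,h)$ is by definition the unique nonnegative solution of $m/\theta=\varphi_h(m)$. From (the proof of) Lemma~\ref{lem-concavity} I will use that each $\varphi_h$ is concave on $[0,\infty)$ and strictly concave on $(0,\infty)$ (because $G''<0$ on $(0,\infty)$), that $\varphi_0(0)=0$ with $\varphi_0'(0)=\E^{sb}(W)\alpha_0(g^2)=1/\theta_c$, and hence that $\varphi_0(m)<m/\theta_c$ for every $m>0$. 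I will also record the elementary facts that $G$ has bounded derivative, $0\le G'(t)\le\|g\|_\infty^2$ (being a variance of $g$ under the tilted measure $\propto e^{tg}\alpha_0$), which yields the uniform bound $\|\varphi_h-\varphi_0\|_\infty\le\|g\|_\infty^2\,h\to0$ as $h\to0$ and makes $h\mapsto\varphi_h(m)$ nondecreasing for each fixed $m$ (indeed $\partial_h\varphi_h(m)=\E^{sb}[G'(Wm+h)]\ge0$), and that $\E^{sb}(W)=\E[W^2]/\E[W]<\infty$ by Condition~\ref{con:weight}(ii), so that $\theta_c>0$.

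For the first limit ($h=0$), I would first observe that $m\mapsto\varphi_0(m)/m$ is continuous and strictly decreasing on $(0,\infty)$, with limits $1/\theta_c$ at $0^+$ and $0$ at $\infty$; hence $m\mapsto m/\varphi_0(m)$ is a strictly increasing bijection of $(0,\infty)$ onto $(\theta_c,\infty)$, and therefore $\theta\mapsto m^+(\theta,0)$ (its inverse) is strictly increasing. Consequently $m_0:=\lim_{\theta\searrow\theta_c}m^+(\theta,0)$ exists and is finite (it is bounded above by $m^+(\theta_0,0)$ for any fixed $\theta_0>\theta_c$). Letting $\theta\searrow\theta_c$ in the identity $m^+(\theta,0)/\theta=\varphi_0(m^+(\theta,0))$ and using continuity of $\varphi_0$ gives $m_0/\theta_c=\varphi_0(m_0)$, which combined with $\varphi_0(m)<m/\theta_c$ for $m>0$ forces $m_0=0$.

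For the second limit ($\theta=\theta_c$), I would note that for each $h>0$ the function $\Delta_h(m):=\varphi_h(m)-m/\theta_c$ is strictly decreasing on $[0,\infty)$: indeed $\Delta_h(0)=\varphi_h(0)=G(h)>0$ (since $\alpha_0$ is even, $g$ odd and $h>0$), $\Delta_h'(0)=\varphi_h'(0)-1/\theta_c=\E^{sb}(W)\bigl(G'(h)-G'(0)\bigr)<0$ because $G'$ is strictly decreasing on $(0,\infty)$, and $\Delta_h''(m)=\E^{sb}[W^2G''(Wm+h)]<0$ for $m\ge0$; hence $m^+(\theta_c,h)$ is its unique zero. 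Since $\Delta_h(m)$ is nondecreasing in $h$ for each fixed $m$, this zero moves to the right as $h$ increases, so $h\mapsto m^+(\theta_c,h)$ is nondecreasing and $m_1:=\lim_{h\searrow0}m^+(\theta_c,h)$ exists and is finite. Passing to the limit in $m^+(\theta_c,h)/\theta_c=\varphi_h(m^+(\theta_c,h))$, using $m^+(\theta_c,h)\to m_1$ together with the uniform convergence $\|\varphi_h-\varphi_0\|_\infty\to0$ and continuity of $\varphi_0$, gives $m_1/\theta_c=\varphi_0(m_1)$, and once more $\varphi_0(m)<m/\theta_c$ for $m>0$ forces $m_1=0$.

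I do not expect a genuine obstacle here: once Lemma~\ref{lem-concavity} is available the argument is entirely soft. The one point that needs a little care is ensuring that the fixed point $m^+$ does not run off to infinity as the parameter reaches its critical value --- this is why I keep track of the bound on $G'$, hence the boundedness of each $\varphi_h$, which in turn rests on compactness of $E=[-1,1]$ and boundedness of $g$ --- after which everything comes down to the fact that the zero of a concave function varies monotonically and continuously under a monotone perturbation.
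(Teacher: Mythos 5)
Your proposal is correct and rests on the same mechanism as the paper's proof: the negative third derivative of $t\mapsto\log\alpha_0(e^{tg})$ gives the strict sub-linearity $\varphi_0(m)<m/\theta_c$ for $m>0$ (the averaged form of the paper's inequality~\eqref{eq-strictlylesslinear}), and passing to the limit in the fixed-point equation $m^+/\theta=\varphi_h(m^+)$ then forces the limiting value to vanish. The extra ingredients you add — monotonicity of $m^+$ in $\theta$ and in $h$ to guarantee that the limits exist, and the uniform bound $\|\varphi_h-\varphi_0\|_\infty\le\|g\|_\infty^2\,h$ — are careful refinements of the same route (the paper's contradiction argument implicitly assumes the limits exist), not a different approach.
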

\begin{proof}
By assumption, for some $\xi\in(0,t)$,
\begin{equation}\label{eq-strictlylesslinear}
\frac{\partial}{\partial t}\log \alpha_0(e^{tg}) = \alpha_0(g^2)t+ \frac{1}{2} \frac{\partial^3}{\partial t^3} \log \alpha_0(e^{t g})\Big|_{t=\xi} t^2 < \alpha_0(g^2)t.
\end{equation}
Hence,
\begin{equation}
m^+(\theta,h) = \theta \E^{sb}\left[\frac{\partial}{\partial t}\log \alpha_0(e^{tg})\Big|_{t=W m^+(\theta,h) +h}\right] < \theta \alpha_0(g^2) (\E^{sb}(W)m^+(\theta,h)+h).
\end{equation}
Now, suppose that $\lim_{\theta \searrow \theta_c} m^+(\theta,0) = c>0$. Then, it follows from the above that
\begin{equation}
c= \lim_{\theta \searrow \theta_c} m^+(\theta,0) <\lim_{\theta \searrow \theta_c}  \theta \alpha_0(g^2) \E^{sb}(W)m^+(\theta,0)=c,
\end{equation}
which leads to a contradiction. Similarly, if we suppose that $\lim_{h \searrow 0} m^+(\theta_c,h) = c>0$, then
\begin{equation}
c= \lim_{h \searrow 0} m^+(\theta_c,h) <\lim_{h \searrow 0}  \theta_c \alpha_0(g^2) (\E^{sb}(W)m^+(\theta_c,h)+h)=c,
\end{equation}
again leading to a contradiction.
\end{proof}

The behavior at criticality depends sensitively on whether or not a certain moment of the weight variable is finite or not. When this is not the case we will assume that the weight variable at least meets a power-law bound for its tail:

\begin{con}\label{con:powerweight}
Let $k$ be some natural number. The weight variable $W$ satisfies either of the following properties:
\begin{enumerate}[(i)]
\item
$\E \left[ W^k\right] < \infty$,
\item
$W$ obeys a power law with exponent $\tau \in (3, k+1]$, i.e. there exist constants $C_W > c_W >0$ and $w_0 >1$ such that
\begin{equation}
c_W w^{-(\tau - 1)} \leq \mathbb{P}(W > w) \leq C_W w^{-(\tau - 1)}, \qquad \forall w>w_0.
\end{equation}
\end{enumerate}
\end{con}

\begin{thm}\label{thm-critexp} 
Suppose that $g$ is an odd function with $\sgn(g(\sigma))=\sgn(\sigma)$ and that, for all $t>0$,
\begin{equation}
\frac{\partial^3}{\partial t^3} \log \alpha_0(e^{t g}) <0
\end{equation}
where $\a_0$ is assumed to be an even measure.
Let $k$ be the smallest natural number such that 
\begin{equation}\label{eq-smallestknegative}
\frac{\partial^k}{\partial t^k}\log \alpha_0(e^{tg})\Big|_{t=0}<0
\end{equation}
and assume that the weight variable $W$ satisfies one of the properties of Condition \ref{con:powerweight} for this $k$.
Then,
the critical exponents $\boldsymbol{\beta}$ and $\boldsymbol{\delta}$ exist and satisfy
\begin{center}
{\renewcommand{\arraystretch}{1.2}
\renewcommand{\tabcolsep}{1cm}
\begin{tabular}[c]{c|ccc}
 &  $\tau\in(3,k+1)$ & $\E(W^k)<\infty$    \\
\hline
$\boldsymbol{\beta}$  & $1/(\tau-3)$ & $1/(k-2)$ \\
$\boldsymbol{\delta}$ & $\tau-2$ & $k-1$
\end{tabular}
}
\end{center}
For the boundary case $\tau=k+1$ there are the following logarithmic corrections for $\boldsymbol{\beta}=1/(k-2)$:	\begin{equation}
	\label{log-corr-M-tau5}
	m^+(\theta,0) \asymp \Big(\frac{\theta-\theta_c}{\log{1/(\theta-\theta_c)}}\Big)^{1/(k-2)} \quad {\rm for\ } \theta \searrow \theta_c,
\end{equation}
and $\boldsymbol{\delta}=k-1$:
\begin{equation}
m^+(\theta_c,h) \asymp \Big(\frac{h}{\log(1/h)}\Big)^{1/(k-1)} \quad {\rm for\ } h \searrow 0.
\end{equation}
\end{thm}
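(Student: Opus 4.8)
The plan is to collapse the functional fixed point equation to the scalar equation $m/\theta=\varphi(m)$ of~\eqref{eq-fixedpointm} (with $\alpha$ the tilted measure), rewrite it as a single asymptotic balance, and invert. Set $\Lambda(t):=\log\alpha_0(e^{tg})$; since $\alpha_0$ is even and $g$ is odd, $\Lambda$ is even, so $\Lambda'(0)=0$, all odd derivatives of $\Lambda$ vanish at $0$, and $\Lambda''(0)=\alpha_0(g^2)=:\sigma^2>0$. As $g$ is bounded (forced by $c+\theta g(\sigma)g(\sigma')>0$), $\Lambda$ is real-analytic on $\R$ and $\Lambda'$ is bounded. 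With the external field tilt one has $\varphi(m)=\E^{sb}\!\big[\Lambda'(Wm+h)\big]$. Introduce the defect $D(t):=\sigma^2 t-\Lambda'(t)$. The hypothesis $\frac{\partial^3}{\partial t^3}\log\alpha_0(e^{tg})<0$ on $(0,\infty)$ says $\Lambda''$ is strictly decreasing there; hence $\Lambda'$ is strictly concave on $(0,\infty)$ with $\Lambda'(0)=0$, and, being bounded, non-decreasing, so $0\le\Lambda'(t)\le\sigma^2 t$ and $D(t)\ge0$ with $D$ non-decreasing for $t\ge0$. The same hypothesis forces $\Lambda^{(j)}(0)=0$ for $3\le j<k$ (the even ones, since otherwise $\Lambda'''$ would have the wrong sign near $0$; in particular $k$ is even and $k\ge4$), so $D(t)=\frac{|\Lambda^{(k)}(0)|}{(k-1)!}\,t^{k-1}(1+O(t^2))$ as $t\to0$. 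Combining this near-$0$ behaviour with $D(t)\le\sigma^2 t$ and positivity/continuity of $D$ on compacts of $(0,\infty)$ gives the uniform two-sided bound
\begin{equation}
c_\star\min\{t^{k-1},t\}\le D(t)\le C_\star\min\{t^{k-1},t\},\qquad t\ge0,
\end{equation}
for some $0<c_\star<C_\star$.

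Plugging $\Lambda'(t)=\sigma^2 t-D(t)$ into $m/\theta=\varphi(m)$ and using $\sigma^2\E^{sb}(W)=1/\theta_c$ gives the exact identity, valid at the (unique nonnegative) solution $m=m^+(\theta,h)$,
\begin{equation}\label{eq-master}
\E^{sb}\!\big[D(Wm+h)\big]=\frac{\theta-\theta_c}{\theta\theta_c}\,m+\sigma^2 h .
\end{equation}
For $\boldsymbol{\beta}$ I would set $h=0$, for $\boldsymbol{\delta}$ I would set $\theta=\theta_c$, so only one term survives on the right. By the bound on $D$, the elementary inequality $\min\{(a+b)^{k-1},a+b\}\le 2^{k-1}(\min\{a^{k-1},a\}+\min\{b^{k-1},b\})$, and monotonicity of $D$, it suffices to evaluate, as $m\downarrow0$,
\begin{equation}
\E^{sb}\!\big[\min\{(Wm)^{k-1},Wm\}\big]=m^{k-1}\,\E^{sb}\!\big[W^{k-1}1_{\{W\le 1/m\}}\big]+m\,\E^{sb}\!\big[W\,1_{\{W>1/m\}}\big]
\end{equation}
together with $D(h)\asymp h^{k-1}$; one then gets $\E^{sb}[D(Wm+h)]\asymp m^{p}+h^{k-1}$ for the appropriate exponent $p$, possibly up to a logarithmic factor in $m$.

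The two cases of Condition~\ref{con:powerweight} are treated separately. If $\E[W^k]<\infty$ then $\E^{sb}(W^{k-1})=\E[W^k]/\E[W]<\infty$, so by dominated convergence $m^{k-1}\E^{sb}[W^{k-1}1_{\{W\le1/m\}}]\asymp m^{k-1}$, while $m\,\E^{sb}[W1_{\{W>1/m\}}]\le m^{k-1}\E^{sb}(W^{k-1})$ is of the same or smaller order; hence $\E^{sb}[D(Wm)]\asymp m^{k-1}$, i.e.\ $p=k-1$. If instead $W$ obeys a power law with exponent $\tau\in(3,k+1]$, then a partial integration (using $\E[W]<\infty$, valid since $\tau>3$) shows $P^{sb}(W>w)\asymp w^{-(\tau-2)}$, and a further partial integration converting this tail bound into truncated-moment bounds gives, for $\tau<k+1$,
\begin{equation}
m^{k-1}\,\E^{sb}\!\big[W^{k-1}1_{\{W\le1/m\}}\big]\asymp m^{\tau-2}\qquad\text{and}\qquad m\,\E^{sb}\!\big[W\,1_{\{W>1/m\}}\big]\asymp m^{\tau-2},
\end{equation}
so $p=\tau-2$, whereas at the endpoint $\tau=k+1$ (where $k-\tau=-1$ makes the first integral logarithmically divergent) both terms acquire a factor $\log(1/m)$. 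Substituting into~\eqref{eq-master}: at $h=0$ one finds $\frac{\theta-\theta_c}{\theta\theta_c}m^+\asymp (m^+)^{p}$, hence $\theta-\theta_c\asymp (m^+)^{p-1}$ and $\boldsymbol{\beta}=1/(p-1)$; at $\theta=\theta_c$, since $h^{k-1}=o(h)$ (as $k\ge4$) the $h^{k-1}$ term is negligible and $\sigma^2 h\asymp (m^+)^{p}$, hence $\boldsymbol{\delta}=p$. With $p=k-1$ in the first regime and $p=\tau-2$ in the second this is exactly the asserted table ($p-1=k-2$ or $\tau-3$; $p=k-1$ or $\tau-2$). In the endpoint case $\tau=k+1$ one instead has $\theta-\theta_c\asymp (m^+)^{k-2}\log(1/m^+)$ (resp.\ $h\asymp (m^+)^{k-1}\log(1/m^+)$); taking logarithms gives $\log(1/m^+)\asymp\log\!\big(1/(\theta-\theta_c)\big)$ (resp.\ $\asymp\log(1/h)$), and re-inserting yields the stated logarithmic corrections. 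Continuity of $m^+$ at $(\theta_c,0)$, which legitimises all the $t\to0$ expansions, is Lemma~\ref{lem-contphasetrans}.

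The main obstacle is the heavy-tailed regime: one has only two-sided power-law \emph{bounds} on $\mathbb P(W>w)$, not an asymptotic, so the truncated-moment estimates must come from partial integrations that keep both constants explicit, and one must verify that the small-$W$ contribution $m^{k-1}\E^{sb}[W^{k-1}1_{\{W\le1/m\}}]$ and the large-$W$ contribution $m\,\E^{sb}[W1_{\{W>1/m\}}]$ really are of the \emph{same} order $m^{\tau-2}$ --- so that neither the $O(t^{k+1})$ correction in $D$ nor the field term $h^{k-1}$ ever dominates --- and then handle the logarithmically divergent case $\tau=k+1$ and propagate the logarithm cleanly through the inversion.
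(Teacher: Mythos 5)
Your proposal is correct and follows essentially the same route as the paper's proof: you rewrite the fixed point equation $m^+/\theta=\E^{sb}\bigl[\partial_t\log\alpha_0(e^{tg})\big|_{t=Wm^++h}\bigr]$ by splitting off the linear term (your master identity is exactly~\eqref{eq-splitofflinear} rearranged), control the remainder through its $t^{k-1}$ behaviour near $0$ and linear behaviour for large argument, invoke truncated-moment estimates for $W$ under Condition~\ref{con:powerweight}, and then invert, citing Lemma~\ref{lem-contphasetrans} for $m^+\to0$ just as the paper does. The only difference is organizational: your global envelope $c_\star\min\{t^{k-1},t\}\le D(t)\le C_\star\min\{t^{k-1},t\}$ packages in one step what the paper obtains from a local Taylor bound with constants $c_1,c_2$ together with a small-/large-$W$ split and explicit error terms.
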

Plots of these critical exponents for $k=4$ and $k=6$ can be found in Figure~\ref{fig-critexps}. Note that for $k=4$, these values are the same as for both the annealed and quenched Ising model~\cite{DomGiaGibHofPri16,DomGiaHof14}. Also the proof is similar as we show now.

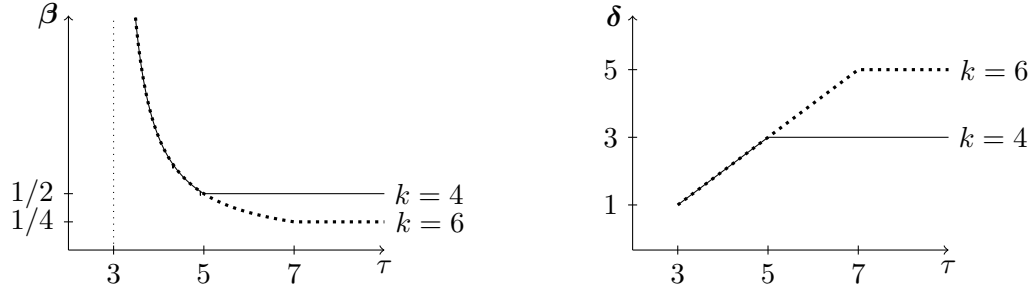
\begin{figure}
\begin{center}
\begin{tikzpicture}[scale=0.6]
	\draw[->] (0,0) -- (7,0) node[anchor=north] {$\tau$};
	\draw (1,0.1) -- (1,-0.1) node[anchor=north] {$3$};
	\draw (3,0.1) -- (3,-0.1) node[anchor=north] {$5$};
	\draw (5,0.1) -- (5,-0.1) node[anchor=north] {$7$};
	\draw[->] (0,0) -- (0,5.2) node[anchor=east] {$\boldsymbol{\beta}$};
	\draw (.1,1.25) -- (-.1,1.25) node[anchor=east] {$1/2$};
	\draw (.1,.625) -- (-.1,.625) node[anchor=east] {$1/4$};
	\draw[dotted] (1,0) -- (1,5.2);
	\draw[domain=1.485:3, samples=500] plot(\x, {2.5/(\x-1)}) -- (3,1.25) -- (7,1.25) node[anchor=west] {$k=4$};
	\draw[very thick, dotted, domain=1.485:5, samples=500] plot(\x, {2.5/(\x-1)}) -- (5,.625) -- (7,.625) node[anchor=west] {$k=6$};
	
\begin{scope}[xshift=12.5cm]
	\draw[->] (0,0) -- (7,0) node[anchor=north] {{$\tau$}};
	\draw (1,0.1) -- (1,-0.1) node[anchor=north] {$3$};
	\draw (3,0.1) -- (3,-0.1) node[anchor=north] {$5$};
	\draw (5,0.1) -- (5,-0.1) node[anchor=north] {$7$};
	\draw[->] (0,0) -- (0,5.2) node[anchor=east] {$\boldsymbol{\delta}$};
	\draw (.1,1) -- (-.1,1) node[anchor=east] {$1$};
	\draw (.1,2.5) -- (-.1,2.5) node[anchor=east] {$3$};
	\draw (.1,4) -- (-.1,4) node[anchor=east] {$5$};
	\draw (1,1) -- (3,2.5) -- (7,2.5) node[anchor=west] {$k=4$};
	\draw[very thick, dotted] (1,1) -- (5,4) -- (7,4) node[anchor=west] {$k=6$};

\end{scope}
\end{tikzpicture}
\end{center}
\caption{Plots of the critical exponents $\boldsymbol{\beta}$ (left) and $\boldsymbol{\delta}$ (right) for $k=4$ (solid) and $k=6$ (dotted).}\label{fig-critexps}
\end{figure}

\begin{proof}
Observe that, since $m^+$ satisfies $m^+/\theta=\varphi(m^+)$,
\begin{equation}\label{eq-mincumgen}\begin{split}
\frac{m^+}{\theta} &= \varphi(m^+)= \E^{sb}\left[\nu^{W,gm^+}(g)\right] = \E^{sb}\left[\frac{\alpha_0(g e^{Wgm^++hg})}{\alpha_0(e^{Wgm^++hg})}\right] \cr
&= \E^{sb}\left[\frac{\partial}{\partial t}\log \alpha_0(e^{tg})\Big|_{t=Wm^++h}\right].\cr
\end{split}\end{equation}
Since the phase transition is continuous by Lemma~\ref{lem-contphasetrans}, we have that $Wm^++h \to 0$ a.s.\ in the limits of interest. Hence, we do a Taylor expansion around $t=0$. We repeatedly use that $\alpha_0(g^{2\ell+1})=0$ for $\ell\in\mathbb{N}$ since $g$ is odd and $\alpha_0$ is even. Also note that $k$ is the smallest number bigger than $3$ for which 
$\frac{\partial^k}{\partial t^k}\log \alpha_0(e^{tg})\Big|_{t=0} \neq 0$, since by our assumption on the third derivative the first non-zero value has to be negative. Hence,
\begin{equation}\label{eq-taylorddtlog}
\frac{\partial}{\partial t}\log \alpha_0(e^{tg}) = \alpha_0(g^2) t + \frac{\partial^k}{\partial t^k}\log \alpha_0(e^{tg})\Big|_{t=\xi} \frac{t^{k-1}}{(k-1)!},
\end{equation}
for some $\xi\in(0,t)$. 
By adding and subtracting the linear term in~\eqref{eq-mincumgen}, we can write
\begin{equation}\label{eq-splitofflinear}
\frac{m^+}{\theta}=\alpha_0(g^2) h + \alpha_0(g^2)\E^{sb}(W)m^+ + \E^{sb}\left[\frac{\partial}{\partial t}\log \alpha_0(e^{tg})\Big|_{t=Wm^++h}-\alpha_0(g^2)(W m^++h)\right].
\end{equation}

Since $\frac{\partial^k}{\partial t^k}\log \alpha_0(e^{tg})\Big|_{t=0}<0$, there exists a constant $t_1$ close enough to $0$ such that there exist constants $0<c_1\leq c_2<\infty$ such that, for all $0<\xi\leq t_1$,
\begin{equation}
-c_2 \leq \frac{\partial^k}{\partial t^k}\log \alpha_0(e^{tg})\Big|_{t=\xi} \leq -c_1.
\end{equation}
Suppose now in the remainder of the proof that $0\leq h<t_1/2$. 

We first compute an upper bound on $m^+$. By~\eqref{eq-strictlylesslinear},
\begin{equation}\begin{split}
\E^{sb}&\left[\frac{\partial}{\partial t}\log \alpha_0(e^{tg})\Big|_{t=Wm^++h}-\alpha_0(g^2)(W m^++h)\right] \cr
&\leq \E^{sb}\left[\left(\frac{\partial}{\partial t}\log \alpha_0(e^{tg})\Big|_{t=Wm^++h}-\alpha_0(g^2)(W m^++h)\right)\indic{W\leq \frac{t_1}{2m^+}}\right].
\end{split}\end{equation}
By~\eqref{eq-taylorddtlog}, this equals, for some $\xi\in(0,Wm^++h)$, 
\begin{equation}\begin{split}
\E^{sb}&\left[\left(\frac{\partial^k}{\partial t^k}\log \alpha_0(e^{tg})\Big|_{t=\xi}\frac{(W m^++h)^{k-1}}{(k-1)!}\right)\indic{W\leq \frac{t_1}{2m^+}}\right] \cr
&\leq -\frac{c_1}{(k-1)!} \E^{sb}\left[(W m^++h)^{k-1} \indic{W\leq \frac{t_1}{2m^+}}\right] \cr
&\leq  -\frac{c_1}{(k-1)! \E(W)} (m^+)^{k-1} \E\left[W^k\indic{W\leq \frac{t_1}{2m^+}}\right].
\end{split}\end{equation}
For $E(W^k)<\infty$, this gives the bound, for some constant $C_1$ which may change from line to line,
\begin{equation}\label{eq-boundm}
\frac{m^+}{\theta} \leq \alpha_0(g^2) h + \alpha_0(g^2)\E^{sb}(W)m^+ - C_1 m^{+\boldsymbol{\delta}}.
\end{equation}
For $\tau\in(3,k+1)$, we use~\cite[Lemma~3.2]{DomGiaGibHofPri16} on truncated moments to obtain 
\begin{equation}
(m^+)^{k-1} \E\left[W^k \indic{W\leq \frac{t_1}{2m^+}}\right] \geq (m^+)^{k-1} c_{k,\tau} \left(\frac{t_1}{2m^+}\right)^{k-(\tau-1)} = C_1 (m^+)^{\tau-2},
\end{equation}
so that~\eqref{eq-boundm} also holds in this  case. For $\tau=k+1$ using the same lemma we obtain
\begin{equation}
\frac{m^+}{\theta} \leq \alpha_0(g^2) h + \alpha_0(g^2)\E^{sb}(W)m^+ - C_1 m^{+k-1} \log(1/m^+).
\end{equation}
Using that $1/\theta_c = \alpha_0(g^2)\E^{sb}(W)$, it follows from~\eqref{eq-boundm} that
\begin{equation}
C_1 m^+(\theta_c,h)^{\boldsymbol{\delta}} \leq \alpha_0(g^2) h,
\end{equation}
so that
\begin{equation}
\limsup_{h\searrow 0}\frac{m^{+}(\theta_c,h)}{h^{1/\boldsymbol{\delta}}} \leq \left(\frac{\alpha_0(g^2)}{C_1}\right)^{1/\boldsymbol{\delta}}<\infty.
\end{equation}
For $\theta>\theta_c$, we have that $m^+(\theta,0)>0$, and hence we can divide both sides of~\eqref{eq-boundm} to  obtain
\begin{equation}
\frac{1}{\theta} \leq  \frac{1}{\theta_c} - C_1 m^{+}(\theta,0)^{\boldsymbol{\delta}-1}.
\end{equation}
or equivalently
\begin{equation}
m^{+}(\theta,0) \leq \left(\frac{1}{C_1 \theta \theta_c}\right)^{\boldsymbol{\beta}}(\theta -  \theta_c)^{\boldsymbol{\beta}},
\end{equation}
where we used that $1/(\boldsymbol{\delta}-1)=\boldsymbol{\beta}$. Hence,
\begin{equation}
\limsup_{\theta \searrow \theta_c}\frac{m^{+}(\theta,0)}{(\theta -  \theta_c)^{\boldsymbol{\beta}}}<\infty.
\end{equation}
The proof for $\tau=5$ is similar.

For the lower bound, we again start from~\eqref{eq-splitofflinear}. We can split
\begin{equation}\begin{split}\label{eq-splitsmallbigW}
\E^{sb}&\left[\frac{\partial}{\partial t}\log \alpha_0(e^{tg})\Big|_{t=Wm^++h}-\alpha_0(g^2)(W m^++h)\right] \cr
&=\E^{sb}\left[\left(\frac{\partial}{\partial t}\log \alpha_0(e^{tg})\Big|_{t=Wm^++h}-\alpha_0(g^2)(W m^++h)\right)\left(\indic{W\leq \frac{t_1}{2m^+}}+\indic{W> \frac{t_1}{2m^+}}\right)\right].
\end{split}\end{equation}
Similarly to the upper bound we can bound the small $W$ term from below by
\begin{equation}
-\frac{c_2}{(k-1)!} \E^{sb}\left[(W m^++h)^{k-1} \indic{W\leq \frac{t_1}{2m^+}}\right] \geq -C_2 m^{+\boldsymbol{\delta}}-error,
\end{equation}
where we again used~\cite[Lemma~3.2]{DomGiaGibHofPri16} and $error$ is an error term that indeed can be shown to be negligible in the limits of interest. For the second term in~\eqref{eq-splitsmallbigW} and $E(W^k)<\infty$ it suffices to observe that 
\begin{equation}
\E^{sb}\left[(W m^++h)^{k-1} \indic{W> \frac{t_1}{2m^+}}\right] \to 0,
\end{equation}
in both limits of interest because $m^+\to0$ by Lemma~\ref{lem-contphasetrans}. For $\tau\in(3,k+1)$, we bound
\begin{equation}\begin{split}
\E^{sb}&\left[\left(\frac{\partial}{\partial t}\log \alpha_0(e^{tg})\Big|_{t=Wm^++h}-\alpha_0(g^2)(W m^++h)\right)\indic{W> \frac{t_1}{2m^+}}\right] \cr
&\geq - \alpha_0(g^2) \E^{sb}\left[(W m^++h)\indic{W> \frac{t_1}{2m^+}}\right] \cr
&\geq - \alpha_0(g^2) m^+  \left(\frac{t_1}{2m^+}\right)^{2-(\tau-1)}-error=-C_2 m^{+\boldsymbol{\delta}}-error,
\end{split}\end{equation}
where we used~\cite[Lemma~3.2]{DomGiaGibHofPri16} once more in the third line.

Hence, we have in both cases that
\begin{equation}\label{eq-lbboundm}
\frac{m^+}{\theta} \geq \alpha_0(g^2) h + \alpha_0(g^2)\E^{sb}(W)m^+ - C_2 m^{+\boldsymbol{\delta}}-error.
\end{equation}
The rest of the analysis can be done as in the upper bound.
\end{proof}

We now present several examples to which this theorem can be applied. We start with two examples for $k=4$, for which the mean field exponents are valid if $\E[W^4]<\infty$. We start by rederiving the results in~\cite{DomGiaHof14} for the Ising model.
\begin{pro}[Ising model]\label{pro-ising}
Let $\alpha_0=\frac12 \delta_{-1}+\frac12 \delta_{+1}$ and  $g(\s)=\s$. Then Theorem~\ref{thm-critexp} holds with $k=4$.
\end{pro}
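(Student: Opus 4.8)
The plan is to verify that this choice of $\alpha_0$ and $g$ satisfies all hypotheses of Theorem~\ref{thm-critexp} with $k=4$; once that is done, the proposition is immediate. First I would compute the relevant log-moment generating function. Since $\alpha_0=\frac12\delta_{-1}+\frac12\delta_{+1}$ and $g(\s)=\s$, we get $\alpha_0(e^{tg})=\cosh t$, hence $\log\alpha_0(e^{tg})=\log\cosh t$. The function $g(\s)=\s$ is trivially odd with $\sgn(g(\s))=\sgn(\s)$, and $\alpha_0$ is even, so the structural assumptions of Lemma~\ref{lem-concavity} and Theorem~\ref{thm-critexp} are met, and the associated reduced fixed point equation~\eqref{eq-fixedpointm} makes sense.

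Next I would check the third-derivative negativity. Differentiating three times gives $\tfrac{\partial}{\partial t}\log\cosh t=\tanh t$, $\tfrac{\partial^2}{\partial t^2}\log\cosh t=1-\tanh^2 t$, and $\tfrac{\partial^3}{\partial t^3}\log\cosh t=-2\tanh t\,(1-\tanh^2 t)$, which is strictly negative for every $t>0$ since both factors are positive there; this is exactly the standing assumption~\eqref{eq-concavity-assumption}. It then remains to identify the smallest natural $k$ with $\tfrac{\partial^k}{\partial t^k}\log\cosh t\big|_{t=0}<0$. At $t=0$ the first derivative is $\tanh 0=0$, the second is $1-\tanh^2 0=1>0$, and the third is $-2\tanh 0\,(1-\tanh^2 0)=0$; one more differentiation gives $\tfrac{\partial^4}{\partial t^4}\log\cosh t=-2+8\tanh^2 t-6\tanh^4 t$, which equals $-2<0$ at $t=0$. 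Hence $k=4$, and, under the corresponding instance of Condition~\ref{con:powerweight} for $k=4$, Theorem~\ref{thm-critexp} applies verbatim, yielding $\boldsymbol{\beta}=1/(\tau-3)$ and $\boldsymbol{\delta}=\tau-2$ for $\tau\in(3,5)$, $\boldsymbol{\beta}=1/2$ and $\boldsymbol{\delta}=3$ when $\E[W^4]<\infty$, together with the logarithmic corrections at the boundary case $\tau=5$.

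As a consistency check — and to connect with the Ising discussion preceding Section~\ref{sec-criticalbehavior} — I would record that requiring $e^{\Phi(\s,\s')}=c+\theta g(\s)g(\s')$ with $g(\s)=\s$ forces $c=\cosh\b$ and $\theta=\sinh\b$, so that $c\pm\theta=e^{\pm\b}>0$ and the rank-$2$ positivity requirement holds. Since $\nu^{w,mg}(g)=\tanh(wm)$, the equation $m/\theta=\varphi(m)$ becomes $m=\sinh\b\int P^{sb}(dw)\tanh(wm)$, which is precisely~\eqref{eq-fixedpointIsing}; moreover $1/\theta_c=\E^{sb}(W)\alpha_0(g^2)=\E[W^2]/\E[W]$ recovers $\b_c=\asinh(\E[W]/\E[W^2])$. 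Because $\b\mapsto\theta=\sinh\b$ is a smooth diffeomorphism near $\b_c>0$, the critical exponents in $\theta$ and in $\b$ coincide. There is essentially no obstacle in this proof; the only mildly delicate point is keeping the $\theta$--$\b$ dictionary consistent with the kernel-positivity constraint, which, as just noted, causes no difficulty.
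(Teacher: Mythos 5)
Your proposal is correct and follows essentially the same route as the paper: compute $\log\alpha_0(e^{tg})=\log\cosh t$, verify the negativity of the third derivative for $t>0$, and check that the fourth derivative at $t=0$ equals $-2$, so that $k=4$ is the smallest index with a negative derivative at the origin. Your added dictionary $c=\cosh\b$, $\theta=\sinh\b$ is the same identification the paper makes (there stated as $c+\theta\s\s'=\tilde c\,e^{\b\s\s'}$ with $\tilde c=\sqrt{c^2-\theta^2}$) and is a harmless side remark.
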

\begin{proof}
Note that this is indeed an Ising model, because we can write
\begin{equation}
K(\s,\s')= c + \theta \s\s' = \tilde{c}e^{\beta \s\s'},
\end{equation}
by choosing $\tilde{c}=\sqrt{c^2-\theta^2}$ and $\beta=\frac12 \log\left(\frac{c+\theta}{c-\theta}\right)$. This can easily be checked by checking the only possible values for the spins.
For this model
\begin{equation}
\log \alpha_0(e^{t g}) = \log\frac{e^t+e^{-t}}{2}=\log\cosh t.
\end{equation}
Hence,
\begin{equation}
\frac{\partial^3}{\partial t^3} \log \alpha_0(e^{t g}) =-2 \tanh t(1-\tanh^2 t) <0,
\end{equation}
for $t>0$, and
\begin{equation}
\frac{\partial^4}{\partial t^4}\log \alpha_0(e^{tg})\Big|_{t=0}=-2(1-\tanh^2 t)^2+4\tanh^2t(1-\tanh^2 t)\Big|_{t=0}=-2.
\end{equation}
\end{proof}

We next give an example for which the spins are continuous.
\begin{pro}[Beta distribution]\label{pro:beta}
For some $b>0$, $\alpha_0$ has density
\begin{equation}
\alpha_0(d \s) = \frac{1}{2 B(b,b)} \left(\frac{1+\s}{2}\right)^{b-1} \left(1-\frac{1+\s}{2}\right)^{b-1} d\s,
\end{equation}
where $B$ is the beta function $B(b,b)=\frac{\Gamma(b)^2}{\Gamma(2b)}$ and $g(\s)=\s$. Then Theorem~\ref{thm-critexp} holds with $k=4$.
\end{pro}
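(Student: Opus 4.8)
The plan is to verify the hypotheses of Theorem~\ref{thm-critexp} for the beta distribution with $g(\s)=\s$, namely: (a) $g$ is odd with $\sgn(g(\s))=\sgn(\s)$, which is immediate; (b) $\alpha_0$ is even, which follows from the symmetry of the beta density about $\s=0$ (the map $\s\mapsto-\s$ sends $\frac{1+\s}{2}\mapsto\frac{1-\s}{2}=1-\frac{1+\s}{2}$, swapping the two factors); (c) $\frac{\partial^3}{\partial t^3}\log\alpha_0(e^{tg})<0$ for all $t>0$; and (d) the smallest $k>3$ with $\frac{\partial^k}{\partial t^k}\log\alpha_0(e^{tg})\big|_{t=0}\neq0$ is $k=4$. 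Since $\alpha_0$ is even and $g$ is odd, all odd derivatives of $\log\alpha_0(e^{tg})$ vanish at $t=0$, so (d) amounts to checking that the fourth derivative at $0$ is nonzero (it is automatically negative by the sign assumption on the third derivative, as noted in the proof of Theorem~\ref{thm-critexp}).

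The computational core is to get a handle on $\Lambda(t):=\log\alpha_0(e^{t\s})$. First I would identify $\alpha_0$ as the law of $2X-1$ where $X\sim\mathrm{Beta}(b,b)$, so that $\alpha_0(e^{t\s})=e^{-t}\,\E[e^{2tX}]=e^{-t}\,{}_1F_1(b;2b;2t)$, a confluent hypergeometric function. Rather than manipulate ${}_1F_1$ directly, the cleaner route is to compute the cumulants of $\s$ under $\alpha_0$, i.e.\ the Taylor coefficients of $\Lambda$ at $0$: the variance $\kappa_2=\alpha_0(\s^2)=\E[(2X-1)^2]=4\V(X)=\frac{1}{2b+1}$ (using $\V(X)=\frac{b^2}{(2b)^2(2b+1)}=\frac{1}{4(2b+1)}$), the third cumulant $\kappa_3=0$ by symmetry, and the fourth cumulant $\kappa_4=\alpha_0(\s^4)-3\kappa_2^2$. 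Computing $\alpha_0(\s^4)=\E[(2X-1)^4]=16\,\E[(X-\tfrac12)^4]$ via the central moments of the beta distribution gives $\E[(X-\tfrac12)^4]=\frac{3}{16}\cdot\frac{1}{(2b+1)(2b+3)}$ (the beta kurtosis), so $\alpha_0(\s^4)=\frac{3}{(2b+1)(2b+3)}$ and hence $\kappa_4=\frac{3}{(2b+1)(2b+3)}-\frac{3}{(2b+1)^2}=\frac{-6}{(2b+1)^2(2b+3)}<0$. Since $\frac{\partial^4}{\partial t^4}\Lambda\big|_{t=0}=\kappa_4\neq0$, condition (d) holds with $k=4$.

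For condition (c) I would argue that $\frac{\partial^3}{\partial t^3}\Lambda(t)=\frac{\partial^3}{\partial t^3}\log\E[e^{2tX}]$ is the third cumulant of $X$ under the exponentially tilted measure $d\alpha_0^{(t)}\propto e^{2tX}dX$, which is again a beta-type (more precisely, a tilted-beta) distribution on $[0,1]$. For $t>0$ this tilted measure is a probability measure on $[0,1]$ that is \emph{strictly} skewed to the right of... actually the sign is more subtle, so the robust route is: the third derivative of $\log\E[e^{2tX}]$ at $t$ equals $8$ times the third central moment of $X$ under the tilt, and one shows this third central moment is negative for $t>0$. This can be done either by a direct computation with ${}_1F_1$ and its logarithmic derivative (using the contiguous relations for confluent hypergeometric functions), or by invoking a log-concavity / Pólya-frequency argument: the beta density on $[0,1]$ with parameters $(b,b)$ is symmetric and log-concave for $b\ge1$, and exponential tilting preserves log-concavity, and for a log-concave density tilted away from its symmetric center the third central moment has a definite sign. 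The cleanest is probably the explicit one: writing $\Lambda(t)=-t+\log{}_1F_1(b;2b;2t)$ and using that $\frac{d}{dt}\log{}_1F_1(b;2b;2t)$ satisfies a Riccati-type ODE coming from the confluent hypergeometric equation, one extracts the sign of the third derivative.

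The main obstacle is condition (c): establishing $\frac{\partial^3}{\partial t^3}\log\alpha_0(e^{t\s})<0$ for \emph{all} $t>0$, not just near $t=0$. Near $t=0$ it follows from $\kappa_3=0$ and $\kappa_4<0$ by continuity, but the global statement requires either a monotonicity/convexity argument for the tilted distribution or an explicit special-function estimate. I expect the authors to give a short self-contained computation; in a full write-up I would present the tilted-measure interpretation and reduce (c) to showing that $X$ under $d\alpha_0^{(t)}\propto e^{2tX}(X(1-X))^{b-1}dX$ has negative third central moment for $t>0$, then prove the latter by noting the tilted density is log-concave and unimodal with mode $>\tfrac12$, so its mean exceeds $\tfrac12$ while the density decays faster on the right, forcing negative skewness — filling in this last monotone-likelihood-ratio step is the only genuinely delicate point, and everything else (oddness, evenness, the value $k=4$) is a routine moment computation as sketched above.
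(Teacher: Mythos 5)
Your verification of oddness, evenness, and of $k=4$ is correct and essentially matches the paper: your direct computation $\kappa_4=\alpha_0(\s^4)-3\alpha_0(\s^2)^2=\frac{-6}{(2b+1)^2(2b+3)}<0$ is the same fact the paper expresses through the excess kurtosis $-\frac{6}{2b+3}$ of the Beta$(b,b)$ law. The genuine gap is exactly where you locate it, condition (c): you never actually prove $\frac{\partial^3}{\partial t^3}\log\alpha_0(e^{t\s})<0$ for \emph{all} $t>0$, and the route you propose to close it does not work. The inference ``tilted density is log-concave and unimodal with mode $>\tfrac12$, hence negative third central moment'' is not a theorem; the sign of the third central moment is not controlled by the mean--median--mode configuration, and the underlying general principle is false even in the symmetric log-concave setting (for the Laplace law the cumulant generating function is $-\log(1-t^2)$, whose third derivative is \emph{positive} for $t>0$, so exponential tilting of a symmetric log-concave density can produce positive skew). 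Moreover, log-concavity of the Beta$(b,b)$ density itself fails for $0<b<1$, while the proposition is claimed for all $b>0$, so even a repaired log-concavity argument would cover only part of the parameter range. Your alternative suggestions (contiguous relations for ${}_1F_1$, a Riccati ODE) are left as gestures, so as written the concavity hypothesis of Theorem~\ref{thm-critexp} is unverified.

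For comparison, the paper closes this step by an explicit special-function identity plus an external concavity theorem: writing $Y=2\tilde Y-1$ with $\tilde Y\sim\mathrm{Beta}(b,b)$ one gets
\begin{equation}
\frac{\partial}{\partial t}\log\alpha_0(e^{t\s})=\frac{J_{b+\frac12}(t)}{J_{b-\frac12}(t)},
\end{equation}
a ratio of modified Bessel functions, and by the Amos-type bounds of Gr\"un and Hornik \cite{GH13} such ratios $J_{\nu+1}/J_\nu$ are concave for every $\nu\ge-\tfrac12$ (they are pointwise minima of concave Amos-type functions); concavity of this first derivative on $(0,\infty)$ is precisely the required sign of the third derivative, and it holds for every $b>0$. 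If you want a self-contained proof along your lines, you would need to establish this concavity of the Bessel ratio (equivalently, of $t\mapsto\frac{d}{dt}\log {}_1F_1(b;2b;2t)-1$) yourself, e.g.\ via the Riccati equation you mention, which is a nontrivial analytic fact and the actual content of this step.
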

\begin{proof}

The measure $\a_0$ has the density of a Beta distribution with parameters $a = b$ which is stretched out over the interval $[-1,1]$. Let $Y = 2\tilde Y-1$ with $\tilde Y \sim \text{Beta}(b,b)$. 
 Then $Y \sim \a_0$ and we obtain for the first derivative of the log-moment generating function
\begin{equation}\begin{split}
\frac{\partial}{\partial t} \log \a_0(e^{t\s}) &= \frac{\partial}{\partial t} \log \E \left[ e^{tY} \right]\\
&= 2 \left( \frac{\partial}{\partial \tilde t} \log \E \left[e^{\tilde t \tilde Y}\right]_{|\tilde t = 2t} \right) - 1 \\
&= 2 \left(  \frac{\sum_{k=0}^\infty \left( \prod_{r=0}^k \frac{b + r}{2b + r} \right) \frac{(2t)^k}{k!}}{\sum_{k=0}^\infty \left( \prod_{r=0}^{k-1} \frac{b + r}{2b + r} \right) \frac{(2t)^k}{k!}}  \right) - 1 \\
&=  \frac{J_{b + \frac{1}{2}}(t)}{J_{b-\frac{1}{2}}(t)} ,
\end{split}\end{equation}
where $J_\nu$ is the modified Bessel function of the first order. It has been shown 
that the ratio $J_{\nu +1}/J_\nu$ of modified Bessel functions is concave for every real number $\nu \geq - 1/2$. This is the case since these ratios can be expressed as the pointwise minimum of a set of Amos-type functions \cite[Theorem 11]{GH13}. It can be easily shown that the second derivative of these Amos-type functions is non-positive and hence the ratios $J_{\nu+1}/J_\nu$ are themselves concave since they are the pointwise minimum of concave functions.  

Let $\kappa_j = \frac{\partial^j}{\partial t^j} \log \E \left[ e^{tY}\right]_{| t=0}$. Clearly $\kappa_j = 2^j \frac{\partial^j}{\partial t^j} \log \E \left[ e^{t \tilde Y}\right]]_{| t=0} = 2^j \tilde \kappa_4$ for every $j \geq 2$. Note that $\kappa_4 = \E [(Y - \E Y)^4] - 3 (\text{Var} (Y))^2$ and recall that the \textit{excess kurtosis} is defined as 
\begin{equation}
\text{ExKurt}(Y) = \frac{\E [(Y- \E Y)^4]}{(\text{Var} (Y))^2} - 3.
\end{equation}
Therefore $\kappa_4$ is given by the excess kurtosis of the beta distribution with parameters $a = b >0$ via the equation
\begin{equation}
 \frac{2^{-4} \kappa_4}{(\text{Var} (\tilde Y))^2} = \text{ExKurt}(\tilde Y) = - \frac{6}{3+2b} < 0.
\end{equation}
Hence $\kappa_4$ must be negative. 
\end{proof}

One can also construct examples for which $k>4$:
\begin{pro}\label{step}
Let $\alpha_0$ have the following step density:
\begin{equation}
\alpha_0(d\s) = \frac{d\s}{c} \left\{\begin{array}{ll} 1, & {\rm for\ } |\s| > \frac13, \vspace{.2cm}\\ 
2(59-18\sqrt{10}), & {\rm for\ }  |\s| \leq \frac13,\end{array} \right.
\end{equation}
where $c=8(10-3\sqrt{10})$ is the normalizing constant, and let $g(\s)=\s$. Then, Theorem~\ref{thm-critexp} holds with $k=6$.
\end{pro}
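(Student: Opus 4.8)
The plan is to verify the three hypotheses of Theorem~\ref{thm-critexp} for $g(\s)=\s$ and this $\a_0$, and to identify the relevant exponent $k$ as $6$. That $g$ is odd with $\sgn(g(\s))=\sgn(\s)$, and that $\a_0$ is an even measure, are immediate from the definitions. Two things then remain: (a) compute the cumulants $\kappa_j:=\frac{\partial^j}{\partial t^j}\log\a_0(e^{t\s})\big|_{t=0}$ and show $\kappa_2>0$, $\kappa_4=0$, $\kappa_6<0$ (the odd ones vanish by symmetry of $\a_0$), so that $k=6$ is the smallest natural number with $\kappa_k<0$; and (b) verify $\frac{\partial^3}{\partial t^3}\log\a_0(e^{t\s})<0$ for every $t>0$, the hypothesis shared by Lemma~\ref{lem-concavity} through Theorem~\ref{thm-critexp}.

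For (a) I would integrate the step density over $[-1,1]$ piecewise, obtaining $\a_0(\s^2),\a_0(\s^4),\a_0(\s^6)$ as explicit numbers in $\sqrt{10}$, and then form $\kappa_4=\a_0(\s^4)-3\a_0(\s^2)^2$ and $\kappa_6=\a_0(\s^6)-15\a_0(\s^2)\a_0(\s^4)+30\a_0(\s^2)^3$. The value $2(59-18\sqrt{10})$ of the density on $[-\tfrac13,\tfrac13]$ (together with the normalisation $c=8(10-3\sqrt{10})$) is chosen precisely so that $\kappa_4=0$, which I would just record as an algebraic identity; the same kind of one-line computation gives $\kappa_6=-\frac{80+28\sqrt{10}}{8505}<0$. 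Since $\kappa_1=\kappa_3=\kappa_5=0$, $\kappa_2=\a_0(\s^2)>0$ and $\kappa_4=0$, the smallest $k$ with $\kappa_k<0$ is $k=6$.

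For (b) the key observation is a closed form for the moment generating function: integrating the density gives
\[
\a_0(e^{t\s})=\frac{2}{ct}\bigl(\sinh t+(117-36\sqrt{10})\sinh(t/3)\bigr)=M_U(t)\,M_V(t),
\]
where $M_U(t)=\frac{3\sinh(t/3)}{t}$ is the MGF of $\mathrm{Unif}[-\tfrac13,\tfrac13]$ and $M_V(t)=\lambda+(1-\lambda)\cosh(2t/3)$ is the MGF of the three-point law $V:=\lambda\delta_0+\tfrac{1-\lambda}{2}(\delta_{2/3}+\delta_{-2/3})$, with $\lambda:=\a_0([-\tfrac13,\tfrac13])=\frac{50-3\sqrt{10}}{60}$. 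Thus $\s\sim\a_0$ is an independent sum $U+V$, and
\[
\frac{\partial^3}{\partial t^3}\log\a_0(e^{t\s})=I(t)+II(t),\qquad I(t):=\frac{\partial^3}{\partial t^3}\log M_U(t),\quad II(t):=\frac{\partial^3}{\partial t^3}\log M_V(t).
\]
Here $I(t)=\frac{2}{27}\bigl(\frac{\cosh(t/3)}{\sinh^3(t/3)}-\frac{27}{t^3}\bigr)<0$ by the elementary inequality $x^3\cosh x<\sinh^3 x$ for $x>0$; the latter holds term by term, since writing $\sinh^3 x=\sum_{k\ge1}\frac{3^{2k+1}-3}{4(2k+1)!}x^{2k+1}$ one checks that its coefficient of $x^{2k+1}$ equals that of $x^3\cosh x$ (namely $\frac{1}{(2k-2)!}$) for $k=1,2$ and strictly exceeds it for $k\ge3$. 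And $II(t)$ is the third cumulant of the $e^{t\,\cdot}$-tilt of $V$, which comes out as
\[
II(t)=\frac{2a^3 p\sinh(at)}{\bigl(2p\cosh(at)+r\bigr)^3}\bigl(r^2-8p^2-2pr\cosh(at)\bigr),\qquad a=\tfrac23,\ p=\tfrac{1-\lambda}{2},\ r=\lambda.
\]
Once $\cosh(at)\ge\frac{r^2-8p^2}{2pr}$ (i.e.\ $t$ larger than an explicit $t^\ast>0$) we get $II(t)\le0$, and $I(t)+II(t)<0$ is clear; so the remaining task is the compact interval $(0,t^\ast]$, where $II(t)>0$ and one must show $I(t)+II(t)<0$.

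This last step is the main obstacle. Crude estimates are not sharp enough: because $\kappa_4=0$ the functions $I$ and $II$ cancel through order $t$ near $0$, and replacing $\cosh(at)$ by $1$ and $2p\cosh(at)+r$ by $1$ in $II$ yields an upper bound $I(t)+2a^3p(r-4p)\sinh(at)$ whose $t^3$-coefficient is in fact positive; nor does a term-by-term argument for $I+II$ itself work, since $\frac{\partial^3}{\partial t^3}\log\a_0(e^{t\s})=\sum_{m\ge3}\frac{\kappa_{2m}}{(2m-3)!}t^{2m-3}$ (using $\kappa_3=\kappa_5=0$, $\kappa_4=0$) has $\kappa_6<0$ but $\kappa_8=\kappa_8(U)+\kappa_8(V)>0$. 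Hence one must keep the exact form of $II(t)$: clearing the positive denominators $(t/3)^3\sinh^3(t/3)$ and $(2p\cosh(2t/3)+r)^3$ turns $I(t)+II(t)<0$ on $(0,t^\ast]$ into a single explicit inequality in $\sinh,\cosh$ and powers of $t$, which I would establish using the exact value of $\lambda$ --- it makes $p(r-4p)=\frac1{240}$, so the top-order terms of the two sides match and the inequality is tight only at $t=0$ --- by a sign analysis on the bounded interval (bounding the cleared numerator below by a manifestly negative combination, or ruling out sign changes via its derivative). Granting (b), all hypotheses of Theorem~\ref{thm-critexp} are met with $k=6$, which is the assertion.
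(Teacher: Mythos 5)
Your reduction is attractive and, as far as it goes, correct: the factorization $\a_0(e^{t\s})=M_U(t)M_V(t)$ with $U\sim\mathrm{Unif}[-\tfrac13,\tfrac13]$ and $V=\lambda\delta_0+\tfrac{1-\lambda}{2}(\delta_{2/3}+\delta_{-2/3})$, $\lambda=\tfrac{50-3\sqrt{10}}{60}$, does check out (indeed $2p+r=1$, $\tfrac{3\lambda-1}{1-\lambda}=117-36\sqrt{10}=b-1$, $p(r-4p)=\tfrac1{240}$), your closed form for $II(t)$ is right, your part (a) matches the paper's values (the paper gets $\kappa_6=-\tfrac{4(20+7\sqrt{10})}{8505}$, i.e.\ your number), and $I(t)<0$ is the Lazarevi\'c inequality $x^3\cosh x<\sinh^3x$, which your coefficient comparison does prove. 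But the proof is not complete: the hypothesis $\frac{\partial^3}{\partial t^3}\log\a_0(e^{t\s})<0$ for \emph{all} $t>0$ is exactly the hard part of this proposition, and on the interval $(0,t^\ast]$ (numerically $t^\ast\approx 0.72$) where $II(t)>0$ you stop at a statement of intent --- ``a sign analysis on the bounded interval (bounding the cleared numerator below by a manifestly negative combination, or ruling out sign changes via its derivative)'' --- rather than an argument. Your own discussion shows why nothing cheap closes it: because $\kappa_4(U)+\kappa_4(V)=0$ the functions $I$ and $II$ cancel to order $t$, the naive majorization of $II$ has a positive $t^3$-coefficient, and the cumulant series is not term-by-term negative ($\kappa_8>0$), so any bound must be sharp to order $t^3$ uniformly on $(0,t^\ast]$. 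Until that inequality is actually established, the decisive hypothesis of Theorem~\ref{thm-critexp} is unverified and the proposition is not proved.

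For calibration, the paper's own proof spends essentially all of its effort on this very step: after the substitution $z=e^{2t/3}$ the condition becomes $P(z)<2p_0^3(z)/\log^3 z$ for explicit polynomials, and is settled by locating the real zeros of a degree-$8$ polynomial and then, after three mean-value/derivative reductions to eliminate the logarithm, of a degree-$33$ polynomial, with computer algebra. So the step you leave open is not a routine afterthought; it carries the same weight as everything you did write down. That said, your route --- additivity of cumulants under the independent decomposition, Lazarevi\'c for the uniform factor, the exact rational expression for the three-point factor --- is a genuinely different and potentially cleaner organization of the problem, and it plausibly can be finished (e.g.\ by clearing the two positive denominators and running a rigorous Taylor-with-remainder or monotonicity/interval argument on the compact interval, or by a computer-assisted sign check analogous to the paper's); but as submitted it has a genuine gap at the final inequality $I(t)+II(t)<0$ on $(0,t^\ast]$.
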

\begin{proof}
For the measure
\begin{equation}\label{eq-defaphab}
\alpha_0(d\s) = \frac{d\s}{c} \left\{\begin{array}{ll} 1, & {\rm for\ } |\s| > \frac13, \vspace{.2cm}\\ 
b, & {\rm for\ }  |\s| \leq \frac13,\end{array} \right.
\end{equation}
with $b>1$, one can compute that 
\begin{equation}
\frac{\partial^4}{\partial t^4}\log \alpha_0(e^{tg})\Big|_{t=0}=-\frac{2(b^2-236 b+964)}{1215 (b+2)^2}.
\end{equation}
Since we want this to equal $0$ in order to have $k>4$,  we need to choose
\begin{equation}
b = 2(59 \pm 18\sqrt{10}).
\end{equation}
Furthermore, we need that, for all $t>0$,
\begin{equation}\label{eq-concavitycondstep}
\frac{\partial^3}{\partial t^3} \log \alpha_0(e^{t g}) <0.
\end{equation}
Making a plot for these two values of $b$ it becomes clear that this is not satisfied for $b = 2(59 + 18\sqrt{10})$. We now prove that this is satisfied for the other value of $b$.

With $\alpha_0$ as in~\eqref{eq-defaphab},
\begin{align}
\log \alpha_0(e^{tg}) &= \log\left(e^t-e^{-t} + (b-1)(e^{\frac13 t}-e^{-\frac13 t} )\right)-\log t-\log c \nn\\
&= \log\left(e^{2t}-1 + (b-1)(e^{\frac43 t}-e^{\frac23 t} )\right)-t- \log t-\log c.
\end{align}
Making the change of variables $z=e^{\frac23t}$, we can write this as 
\begin{equation}
\log \alpha_0(e^{tg}) =  \log\left(z^3-1 + (b-1)(z^2-z)\right)-\frac32 \log z- \log\log z+\log\frac32-\log c.
\end{equation}
Defining the polynomials
\begin{equation}
p_n(z) = 3^n z^3 + 2^n (b-1) z^2 - (b-1) z - \indic{n=0},
\end{equation}
we can write
\begin{equation}
\log\left(z^3-1 + (b-1)(z^2-z)\right) = \log p_0(z).
\end{equation}
Note that
\begin{equation}
z p'_n(z) = z ( 3^{n+1}z^2+2^{n+1}(b-1)z-(b-1)) = p_{n+1}(z),
\end{equation}
and that
\begin{equation}
\frac{\partial}{\partial t} z(t) = \frac23 e^{\frac23 t} = \frac23  z.
\end{equation}
Hence,
\begin{align}
\frac{\partial^3}{\partial t^3} \log p_0(z) &= \frac23 \frac{\partial^2}{\partial t^2} \frac{p_1(z)}{p_0(z)} = \left( \frac23\right)^2 \frac{\partial}{\partial t}\left(\frac{p_2(z)}{p_0(z)} - \left(\frac{p_1(z)}{p_0(z)}\right)^2\right) \nn\\
&=  \left( \frac23\right)^3 \left(\frac{p_3(z)}{p_0(z)} -3 \frac{p_2(z) p_1(z)}{p_0^2(z)}+2 \left(\frac{p_1(z)}{p_0(z)}\right)^3\right).
\end{align}
One can also show that
\begin{equation}
\frac{\partial^3}{\partial t^3}\left(-\frac32 \log z- \log\log z+\log\frac32-\log c\right) = -\frac{16}{27 \log^3(z)}.
\end{equation}
Hence,
\begin{align}
\frac{\partial^3}{\partial t^3} \log \alpha_0(e^{t g}) &= \left( \frac23\right)^3 \left(\frac{p_3(z)}{p_0(z)} -3 \frac{p_2(z) p_1(z)}{p_0^2(z)}+2 \left(\frac{p_1(z)}{p_0(z)}\right)^3\right) -\frac{16}{27 \log^3(z)} \nn\\
&=\left( \frac{2}{3p_0(z)}\right)^3 \left(p_3(z)p_0^2(z) -3 p_2(z) p_1(z)p_0(z)+2 p_1^3(z) -2\frac{p_0^3(z)}{ \log^3(z)}\right).
\end{align}
Since,
\begin{equation}
p_0(z) = (z-1)(z^2+bz+1) >0,
\end{equation}
for all $z>1$, it remains to show that, for all $z>1$,
\begin{equation}\label{eq-condPsmall1}
P(z) := p_3(z)p_0^2(z) -3 p_2(z) p_1(z)p_0(z)+2 p_1^3(z) < 2\frac{p_0^3(z)}{ \log^3(z)}.
\end{equation}
When $P(z)\leq 0$, we are done. One can analyze the zeros of this polynomial for ${b=2(59 -18\sqrt{10})}$, e.g.\ using Mathematica, which shows that this degree 8 polynomial has three zeros smaller than one, four imaginary zeros and one zero at $z=z_0\approx 12.254$. By checking that, e.g., $P(13)<0$, this means that $P(z)\leq0$ for all $z\geq z_0$. It hence remains to analyze $z<z_0$ for which $P(z)>0$.

For $P(z)>0$~\eqref{eq-condPsmall1} is equivalent to
\begin{equation}
\log^3 z - 2 \frac{p_0^3(z)}{P(z)}<0.
\end{equation}
Since,
\begin{equation}
\log^3 z - 2 \frac{p_0^3(z)}{P(z)}\Big|_{z=1} = 0,
\end{equation}
we can write
\begin{equation}
\log^3 z - 2 \frac{p_0^3(z)}{P(z)} = \frac{d}{dz}\left(\log^3 z - 2 \frac{p_0^3(z)}{P(z)}\right)\Big|_{z=\xi} (z-1),
\end{equation}
for some $\xi\in(1,z)$, where it should be noted that also $\xi<z_0$ if $z<z_0$. It hence suffices to show that 
\begin{equation}
z \frac{d}{dz}\left(\log^3 z - 2 \frac{p_0^3(z)}{P(z)}\right) < 0,
\end{equation}
holds for all $1<z<z_0$. The same argument can be repeated twice more to obtain that it suffices to show that, for all $1<z<z_0$,
\begin{equation}
6 - \frac{Q(z)}{P^4(z)} < 0,
\end{equation}
for some polynomial $Q(z)$, which is equivalent to showing that
\begin{equation}
6 P^4(z) - Q(z) < 0.
\end{equation}
We can again analyze the zeros of this degree $33$ polynomial using Mathematica. This shows that there are $18$ zeros at most $1$, 14 imaginary zeros and one zero at $z\approx 42.485$ which is clearly bigger than $z_0$. Computing, e.g., that $6 P^4(2) - Q(2)<0$ shows that $6 P^4(z) - Q(z)<0$ for all $1<z<z_0$, so that we can conclude~\eqref{eq-concavitycondstep}.

Finally,
\begin{equation}
\frac{\partial^6}{\partial t^6}\log \alpha_0(e^{tg})\Big|_{t=0} = -\frac{4(20+7\sqrt{10})}{8505}<0,
\end{equation}
so that indeed $k=6$ is the smallest natural number such that~\eqref{eq-smallestknegative} holds.
\end{proof}

\subsection{State space $S^1$} 

Take the rotation-invariant kernel 
\begin{equation}
K(\s,\s')= e^{\Phi(\s,\s')}=c+ \theta \cos(\s-\s' )=
c+  \theta  [\cos(\s) \cos(\s' )+ \sin(\s) \sin(\s' )],
\end{equation}
whose
range is spanned by the constants and the functions $ \s \mapsto \cos(\s)$ and $\s \mapsto \sin(\s)$. We assume that the constants $c$ and $\theta$ are chosen s.t.\ $K$ is strictly positive. 
Here $\a$ is any probability measure on $S^1$. 

Observe that 
\begin{equation}\begin{split}
T_{\Phi,P,\alpha}(V)(\s)&=\int P^{sb}(dw )\int\nu^{w, V}(d\tilde \s)(K(\s,\tilde \s))\cr
&= c + \theta \cos(\s) \E^{sb}\int\nu^{W V}(d\tilde \s) \cos (\tilde\s)
+ \theta \sin(\s) \E^{sb}\int\nu^{W V}(d\tilde\s) \sin(\tilde \s).
\end{split}\end{equation}
Hence any $V$ satisfying 
\begin{equation}\begin{split}
V=T_{\Phi,P,\alpha}(V),
\end{split}\end{equation}
must be of the form 
$V(\s)=c +  b\cos(\s) + a \sin(\s)$. Hence the fixed point equation reduces to 
the two-dimensional equation 

\begin{equation}\begin{split}
&b/\theta= \E^{sb}\nu^{W (b\cos(\cdot) + a \sin(\cdot))}(\cos(\s))=:F_1(b,a),\cr
&a/\theta= \E^{sb}\nu^{W (b\cos(\cdot) + a \sin(\cdot))}(\sin(\s))=:F_2(b,a).\cr
\end{split}\end{equation}

Suppose that there is an $S^1$-rotational symmetry also of the measure $\a$. 
Then one solution $(a,b)$ can be rotated and gives a whole orbit of solutions. 
It is therefore enough to choose one representative and 
put $a=0$. Then the second equation is automatically fulfilled. 

We repeat the calculation in the presence of a field, which is added by introducing the a priori measures 
$\a_h$ with $\frac{d\a_h}{d\a}=e^{h \cos(\cdot)}/ \a(e^{h \cos(\cdot)})$, 
that is we have a magnetic field which couples to the cosine.  

Using Theorem~\ref{thm-pressure} we rewrite the free energy in the following form, 
where the supremum is over all solutions $(a,b)$ of the fixed point equation, 
\begin{equation}
\begin{split}
\psi(\Phi,P,h) &= \sup_{a,b} \left(\frac{c}{2}\E(W) -\frac{1}{2}\E(W) \frac{b^2+a^2}\theta + 
\E \log \alpha_h(e^{W( b\cos(\cdot) + a \sin(\cdot))})\right)\cr
&=: \frac{c}{2}\E(W)+\sup_{a,b} \chi_h(b,a).
\end{split}
\end{equation} 

As mentioned above, for vanishing magnetic field $h$ it suffices to take $a=0$. 
For $h>0$, we can reparametrize $\chi_h(b,a)$ by setting $b = \tilde{b} \cos(\phi_0)$ and $a = \tilde{b} \sin(\phi_0)$. Then
\begin{equation}
\psi(\Phi,P,h) = \frac{c}{2}\E(W)+\sup_{a,b} \chi_h(b,a) = \frac{c}{2}\E(W)+\sup_{\tilde{b}} \sup_{\phi_0} \tilde\chi_h(\tilde{b},\phi_0), 
\end{equation} 
where
\begin{equation}
\chi_h(\tilde{b},\phi_0)  = -\frac{1}{2}\E(W) \frac{\tilde{b}^2}\theta + 
\E \log \alpha_h(e^{W \tilde{b} \cos(\cdot-\phi_0)}).
\end{equation} 
Now observe that, for all $h,t>0$,
\begin{equation}
 \alpha_h(e^{t \cos(\cdot-\phi_0)}) = \frac{ \alpha_0(e^{h \cos(\cdot) + t \cos(\cdot-\phi_0)})}{ \alpha_0(e^{h\cos(\cdot)})}.
\end{equation}
We can reparametrize again, setting $h+t \cos(\phi_0)=\tilde{h} \cos(\psi_0)$ and $t \sin(\phi_0)=\tilde{h} \sin(\psi_0)$. Then,
\begin{equation}
\alpha_0(e^{h \cos(\cdot) + t \cos(\cdot-\phi_0)}) =  \alpha_0(e^{\tilde{h} \cos(\phi-\psi_0)}),
\end{equation}
which is independent of $\psi_0$ by rotation symmetry. It thus suffices to optimize $\tilde{h}=\tilde{h}(\phi)$. Since,
\begin{equation}
\tilde{h}^2 = (h+\cos(\phi_0))^2+\sin(\phi_0)^2,
\end{equation}
this is clearly maximized by choosing $\phi_0=0$.
Hence, also in the presence of a field it suffices to only look at solutions for which $a=0$.

So our problem is reduced to 
\begin{equation}\begin{split}
&b/\theta= \E^{sb}\nu^{W b\cos(\cdot) }(\cos(\s)). \cr
\end{split}\end{equation}
We have already seen in the proof of Proposition \ref{pro:beta} that the map $[0,\infty)\ni t \mapsto \nu^{t \cos}(\cos)= \frac{J_1(t)}{J_0(t)}$ 
is concave. So the phase transition is second order here, too, with critical $\theta$ 
given by 
\begin{equation}
1/\theta_c=\E^{sb}(W)\a(\cos^2(\s))=\frac{\E^{sb}(W)}{2}.
\end{equation}

\subsection{State space $S^q$ \label{sec-criticalbehavior5.4}} 
Take the rotation-invariant kernel $K(\s,\s') = c+ \theta \s \cdot \s'$ where we assume that the constants $c$ and $\theta$ are chosen s.t.\ $K$ is strictly positive.
Observe that 
\begin{equation}\begin{split}
T_{\Phi,P,\alpha}(V)(\s)&=\int P^{sb}(dw )\int\nu^{w, V}(d\tilde \s)(K(\s,\tilde \s))\cr
&= c + \theta \s \cdot \E^{sb}\int\nu^{W V}(\tilde\s).
\end{split}\end{equation}
Hence any $V$ satisfying $V=T_{\Phi,P,\alpha}(V)$
must be of the form $V(\s)=c +  m\cdot\s$, where $m$ must satisfy
\begin{equation}
\frac{m}{\theta}=\E^{sb} [\nu^{W, m\cdot \tilde\s}(\tilde \s)] = \E^{sb}\left[\frac{\alpha_h(e^{Wm\cdot\tilde\s}\tilde\s)}{\alpha_h(e^{Wm\cdot\tilde\s})}\right].
\end{equation}
For $h=0$ every rotation of solutions of this equation is also a solution, so w.l.o.g.\ we can assume that $m=|m|e_1$. Then, we get
\begin{equation}
\frac{|m|}{\theta}e_1= \E^{sb}\left[\frac{\alpha_0(e^{W|m|e_1\cdot\tilde\s}\tilde\s)}{\alpha_0(e^{W|m|e_1\cdot\tilde\s})}\right].
\end{equation}
For $h\neq 0$, we have by Theorem~\ref{thm-pressure} that
\begin{equation}\begin{split}
\psi(\Phi,P) &= \sup_m \left(-\frac{\E[W]}{2} \E^{sb} [\nu^{W, m\cdot \s}(c+m \cdot \s)]+\E [\log \alpha_h(e^{W(c+m\cdot\s)})]\right) \cr
&= \sup_m \left(\frac{\E[W]c}{2}-\frac{m\cdot m}{\theta}+\E [\log \alpha_h(e^{Wm\cdot\s})]\right) \cr
&= \frac{\E[W]c}{2} + \sup_{|m|} \left(-\frac{|m|^2}{\theta}+\sup_{\tilde m \,:\, |\tilde m|=|m|}  \E [\log \alpha_h(e^{W\tilde m\cdot\s})]\right).
\end{split}\end{equation}
Since,
\begin{equation}
\alpha_h(e^{t\cdot\s}) = \frac{\alpha_0(e^{t\cdot\s+h\cdot\s})}{\alpha_0(e^{t\cdot\s})},
\end{equation}
we have to maximize $\alpha_0(e^{(t+h)\cdot\s})$ for fixed $|t|$. By rotation symmetry this is a monotone increasing function of $|t+h|$. We hence need to maximize $|t+h|$ for fixed $h$ and $|t|$. That is, we need to maximize
\begin{equation}
|t+h| = \sqrt{|t|^2 + h\cdot t + |h|^2},
\end{equation}
so that it suffices to maximize $h\cdot t$ for given $|t|$ and $h$. This is clearly done by taking
\begin{equation}
t=\frac{|t|}{|h|}h.
\end{equation}
If we assume, w.l.o.g., that $h=|h|e_1$, we hence get that
\begin{equation}
\psi(\Phi,P,\a) = \frac{\E[W]c}{2} + \sup_{|m|} \left(-\frac{|m|^2}{\theta}+ \E [\log \alpha_h(e^{W |m|e_1\cdot\s})]\right),
\end{equation}
where the supremum is over solutions to
\begin{equation}
\frac{|m|}{\theta} e_1 =  \E^{sb}\left[\frac{\alpha_h(e^{W|m|e_1\cdot\s}\s)}{\alpha_h(e^{W|m|e_1\cdot\s})}\right].
\end{equation}
Suppose now that $\alpha_0$ is the uniform distribution over $S^q$. Then, $e_1 \cdot \s = \cos\varphi$, where $\varphi$ is the angle between $e_1$ and $\s$. The measure of such a given angle is, for $\varphi\in[-\pi/2,\pi/2]$, given by
\begin{equation}
\alpha(d\varphi) =c (\sin \varphi)^{q-1} d\varphi,
\end{equation}
for some normalization constant $c$. Hence,
\begin{equation}
\alpha_0(e_1 \cdot \s \leq t) =\alpha_0(\cos\varphi  \leq t) = \alpha_0(\varphi  \geq \arccos t) = c \int_{\arccos t}^{\pi/2} (\sin \varphi)^{q-1} d\varphi,
\end{equation}
since $\arccos$ is a decreasing function. Hence,
\begin{equation}\begin{split}
\alpha_0(dt) & = \frac{d}{dt} c \int_{\arccos t}^{\pi/2} (\sin \varphi)^{q-1} d\varphi = -(\sin(\arccos t)^{q-1} \frac{d}{dt} \arccos t \cr
&= (1-t^2)^{q/2-1} = (1-t)^{q/2-1}(1+t)^{q/2-1}.
\end{split}\end{equation}
Hence, this model on $S^q$ behaves the same as the model with spins on $[-1,1]$, where $\alpha_0$ has a Beta distribution with $\beta=q/2$.

\subsection*{Acknowledgements.}
This work was supported by DFG Research Training Group 2131: High-dimensional Phenomena in Probability -- Fluctuations and Discontinuity.


\begin{thebibliography}{99}
\bibliographystyle{plain}

\bibitem{BisKot07}
M.~Biskup and Roman Koteck\'y.
\newblock Phase coexistence of gradient Gibbs states.
\newblock {\em Probability Theory and Related Fields}, {\bf 139}(1):1--39, (2007).

\bibitem{Bo07}
V.I.~Bogachev.
\newblock {\em Measure Theory. Volume II.}
\newblock Springer-Verlag, (2007).

\bibitem{BolJanRio07}
B.~Bollob\'as, S.~Janson and O.~Riordan.
\newblock The phase transition in inhomogeneous random graphs.
\newblock {\em Random Structures and Algorithms}, {\bf 31}(1):3--122, (2007).

\bibitem{CojJaa16}
A.~Coja-Oghlan and N.~Jaafari.
\newblock On the Potts antiferromagnet on random graphs.
\newblock Preprint, arXiv:1603.00081, (2016).

\bibitem{ConDomGiaSta13}
P.~Contucci, S.~Dommers, C.~Giardin\`a and S.~Starr.
\newblock Antiferromagnetic Potts model on the Erd{\H o}s-R\'enyi random graph.
\newblock {\em Communications in Mathematical Physics}, {\bf 323}(2):517--554, (2013).

\bibitem{DemMon10}
A.~Dembo and A.~Montanari.
\newblock Ising models on locally tree-like graphs.
\newblock {\em The Annals of Applied Probability}, {\bf 20}(2):565--592, (2010).

\bibitem{DemMonSlySun12}
A.~Dembo, A.~Montanari, A.~Sly and N.~Sun.
\newblock The replica symmetric solution for Potts models on $d$-regular graphs.
\newblock {\em Communications in Mathematical Physics}, {\bf 327}(2):551--575, (2014).

\bibitem{DemMonSun13}
A.~Dembo, A.~Montanari and N.~Sun.
\newblock Factor models on locally tree-like graphs.
\newblock {\em The Annals of Probability}, {\bf 41}(6):4162--4213, (2013).

\bibitem{DemZei09}
A.~Dembo and O.~Zeitoni.
\newblock {\em Large Deviations Techniques and Applications.}
\newblock 2nd ed.
\newblock Springer-Verlag, (2009).

\bibitem{Dom15}
S.~Dommers.
\newblock Metastability of the Ising model on random regular graphs at zero temperature.
\newblock {\em Probability Theory and Related Fields}, Online First, DOI: 10.1007/s00440-015-0682-0, (2015).

\bibitem{DomGiaGibHofPri16}
S.~Dommers, C.~Giardin\`a, C.~Giberti, R.~van~der~Hofstad and M.L.~Prioriello.
\newblock Ising critical behavior of inhomogeneous Curie-Weiss models and annealed random graphs.
\newblock {\em Communications in Mathematical Physics}, {\bf 348}(1):221--263, (2016).

\bibitem{DomGiaHof10}
S.~Dommers, C.~Giardin\`a and R.~van~der~Hofstad.
\newblock Ising models on power-law random graphs.
\newblock {\em Journal of Statistical Physics}, {\bf 141}(4):638--660, (2010).

\bibitem{DomGiaHof14}
S.~Dommers, C.~Giardin\`a and R.~van~der~Hofstad.
\newblock Ising critical exponents on random trees and graphs.
\newblock {\em Communications in Mathematical Physics}, {\bf 328}(1):355--395, (2014).

\bibitem{DomHolJovNar16}
S.~Dommers, F.~den~Hollander, O.~Jovanovski and F.R.~Nardi.
\newblock Metastability for Glauber dynamics on random graphs.
\newblock Preprint, arXiv:1602.08900, (2016). To appear in {\em Annals of Applied Probability}.

\bibitem{DorGolMen08}
S.N.~Dorogovtsev, A.V.~Goltsev and J.F.F.~Mendes.
\newblock Critical phenomena in complex networks.
\newblock {\em Reviews of Modern Physics}, {\bf 80}(4):1275--1335, (2008).

\bibitem{EntFerHolRed10}
A.C.D.~van~Enter, R.~Fern\'andez, F.~den~Hollander and F.~Redig.
\newblock A large-deviation view on dynamical Gibbs-non-Gibbs transitions.
\newblock {\em Moskow Mathematical Journal}, {\bf 10}(4):687--711, (2010).

\bibitem{EntFerSok93}
A.C.D.~van~Enter, R.~Fern\'andez and A.D.~Sokal.
\newblock Regularity properties and pathologies of position-space renormalization-group transformations: Scope and limitations of Gibbsian theory.
\newblock {\em Journal of Statistical Physics}, {\bf 72}(5):879--1167, (1993).

\bibitem{EntKulMae00}
A.C.D.~van~Enter, C.~K\"ulske and C.~Maes.
\newblock Comment on ``Critical behavior of the randomly spin diluted 2D Ising model: A grand ensemble approach''.
\newblock {\em Physical Review Letters}, {\bf 84}(26):6134, (2000).

\bibitem{FraBalFosChi09}
D.~Fraiman, P.~Balenzuela, J.~Foss and D.R.~Chialvo.
\newblock Ising-like dynamics in large-scale functional brain networks.
\newblock {\em Physical Review E}, {\bf 79}:061922, (2009).

\bibitem{GiaGibHofPri15}
C.~Giardin\`a, C.~Giberti, R.~van~der~Hofstad and M.L.~Prioriello.
\newblock Quenched central limit theorems for the Ising model on random graphs.
\newblock {\em Journal of Statistical Physics}, {\bf 160}(6):1623--1657, (2015).

\bibitem{GiaGibHofPri16}
C.~Giardin\`a, C.~Giberti, R.~van~der~Hofstad and M.L.~Prioriello.
\newblock Annealed central limit theorems for the Ising model on random graphs.
\newblock {\em ALEA, Latin American Journal of Probability and Mathematical Statistics}, {\bf 13}(1):121--161, (2016).

\bibitem{GH13}
B.~Gr\"un and K.~Hornik.
\newblock Amos-type bounds for modified Bessel function ratios.
\newblock {\em Journal of Mathematical Analysis and Applications}, {\bf 408}(1):91--101, (2013).

\bibitem{GueTon04}
F.~Guerra and F.L.~Toninelli.
\newblock The high-temperature region of the Viana-Bray diluted spin glass model.
\newblock {\em Journal of Statistical Physics}, {\bf 115}(1--2):531--555, (2004).

\bibitem{HolRedZui15}
F.~den~Hollander, F.~Redig and W.~van~Zuijlen.
\newblock Gibbs-non-Gibbs dynamical transitions for mean-field interacting Brownian motions.
\newblock {\em Stochastic Processes and their Applications}, {\bf 125}(1):371--400, (2015).

\bibitem{JahKul16}
B. Jahnel and C. K\"ulske.
\newblock Sharp thresholds for Gibbs-non-Gibbs transition in the fuzzy Potts model with a Kac-type interaction.
\newblock Preprint, arXiv:1502.04238, (2016). To appear in {\em Bernoulli}.

\bibitem{JahKulBot14}
B.~Jahnel, C.~K\"ulske and G.I.~Botirov.
\newblock Phase transition and critical values of a nearest-neighbor system with uncountable local state space on Cayley trees.
\newblock {\em Mathematical Physics, Analysis and Geometry}, {\bf 17}:9158, (2014).

\bibitem{K02}
O.~Kallenberg.
\newblock \emph{Foundations of Modern Probability}.
\newblock 2nd ed.
\newblock Springer-Verlag, Berlin, (2002).

\bibitem{Kuh94}
R.~K\"uhn.
\newblock Critical behavior of the randomly spin diluted 2D Ising model: A grand ensemble approach.
\newblock {\em Physical Review Letters}, {\bf 73}(16):2268--2271, (1994).

\bibitem{Kul03}
C.~K\"ulske.
\newblock Analogues of non-Gibbsianness in joint measures of disordered mean field models.
\newblock {\em Journal of Statistical Physics}, {\bf 112}(5):1079--1108, (2003).

\bibitem{KulLen07}
C.~K\"ulske and A.~Le~Ny.
\newblock Spin-flip dynamics of the Curie-Weiss model: Loss of Gibbsianness with possibly broken symmetry.
\newblock {\em Communications in Mathematical Physics}, {\bf 271}(2):431--454, (2007).

\bibitem{KulLenRed04}
C.~K\"ulske, A.~Le~Ny and F.~Redig.
\newblock Relative entropy and variational properties of generalized Gibbsian measures.
\newblock {\em The Annals of Probability}, {\bf 32}(2):1691--1726, (2004).

\bibitem{KulRed06}
C.~K\"ulske and F.~Redig.
\newblock Loss without recovery of Gibbsianness during diffusion of continuous spins.
\newblock {\em Probability Theory and Related Fields}, {\bf 135}(3):428--456, (2006).

\bibitem{MonSab10}
A.~Montanari and A.~Saberi.
\newblock The spread of innovations in social networks.
\newblock {\em Proceedings of the National Academy of Sciences}, {\bf 107}(47):20196--20201, (2010).

\bibitem{Mor64}
T.~Morita.
\newblock Statistical mechanics of quenched solid solutions with application to magnetically dilute alloys.
\newblock {\em Journal of Mathematical Physics}, {\bf 5}(10):1401--1405, (1964).

\bibitem{MosSly13}
E.~Mossel and A.~Sly.
\newblock Exact thresholds for Ising-Gibbs samplers on general graphs.
\newblock {\em The Annals of Probability}, {\bf 41}(1):294--328, (2013).

\bibitem{NapTur16}
G.M. Napolitano and T.S. Turova.
\newblock The Ising model on the random planar causal triangulation: Bounds on the critical line and magnetization properties.
\newblock {\em Journal of Statistical Physics}, {\bf 162}(3):739--760, (2016).

\bibitem{New03}
M.E.J.~Newman.
\newblock The structure and function of complex networks.
\newblock {\em SIAM Review}, {\bf 45}(2):167--256, (2003).

\bibitem{RoeRus14}
S.~Roelly and W.M.~Ruszel.
\newblock Propagation of Gibbsianness for infinite-dimensional diffusions with space-time interaction.
\newblock {\em Markov Processes And Related Fields}, {\bf 20}(4):653--674, (2014).

\end{thebibliography}
\end{document}